\newcommand{\ep}[0]{\epsilon}
\newcommand{\der}[2]{\frac{d{#1}}{d{#2}}}
\newcommand{\ov}[1]{\frac{1}{#1}}
\newcommand{\f}[1]{\mathbb{#1}}
\newcommand{\ar}[1]{\mathbb{R}^{#1}}
\newcommand{\rp}[1]{\ar{}\text{P}^{#1}}
\newcommand{\mods}[1]{\mathfrak{M}_{\text{#1}>0}}
\newcommand{\modse}[1]{\mathfrak{M}_{\text{#1}\geq0}}
\newcommand{\zco}[2]{H^{#2}(#1,\f{Z})}
\newcommand{\twoco}[2]{H^{#2}(#1,\f{Z}_2)}
\newcommand{\cpcs}[2]{\#^{#1}\f{C}P^2\#^{#2}\overline{\f{C}P^2}}
\numberwithin{equation}{section}
\newtheoremstyle{fancy1}{10pt}{10pt}{\itshape}{12pt}{\textsc\bgroup}{.\egroup}{8pt}{
}
\newtheoremstyle{fancy2}{10pt}{10pt}{}{12pt}{\itshape}{.}{8pt}{ }
\theoremstyle{fancy1}
\newtheorem{cor}[equation]{Corollary}
\newtheorem{lem}[equation]{Lemma}
\newtheorem{thm}[equation]{Theorem}
\newtheorem*{thm*}{Theorem}
\newtheorem{main}{Theorem}
\newtheorem*{main*}{Theorem}
\newtheorem*{cor*}{Corollary}
\newtheorem*{prop*}{Proposition}
\newtheorem*{problem*}{Problem}
\theoremstyle{fancy2}
\newtheorem{rem}[equation]{Remark}
\newtheorem*{rems*}{Remarks}
\newtheorem*{rem*}{Remark}
\newtheorem*{example*}{Example}
\newcommand{\cref}[1]{Corollary~\ref{#1}}
\newcommand{\lref}[1]{Lemma~\ref{#1}}
\newcommand{\tref}[1]{Theorem~\ref{#1}}
\newcommand{\sref}[1]{Section~\ref{#1}}
\renewcommand{\H}{\ensuremath{\operatorname{H}}}
\newcommand{\F}{\ensuremath{\operatorname{F}}}
\newcommand{\SO}{\ensuremath{\operatorname{SO}}}
\renewcommand{\O}{\ensuremath{\operatorname{O}}}
\newcommand{\Spin}{\ensuremath{\operatorname{Spin}}}
\newcommand{\Pin}{\ensuremath{\operatorname{Pin}}}
\def\con#1=#2(#3){#1 \equiv #2 \bmod{#3}}
\begin{document}

\title{Moduli spaces of Ricci positive metrics in dimension five}

\author{McFeely Jackson Goodman}
\address{University of California, Berkeley}
\email{mjgoodman@berkeley.edu}
\thanks{This research was partially supported by National Science Foundation grant  DMS-2001985}

\begin{abstract}
We use the \(\eta\) invariants of spin\(^c\) Dirac operators to distinguish connected components of moduli spaces of  Riemannian metrics with positive Ricci curvature.  We then find infinitely many non-diffeomorphic five dimensional manifolds for which these moduli spaces each  have infinitely many components.  The manifolds are total spaces of principal \(S^1\) bundles over \(\#^a\f{C}P^2\#^b\overline{\f{C}P^2}\) and the metrics are lifted from Ricci positive metrics on the bases. Along the way we classify   5-manifolds with fundamental group \(\f{Z}_2\) admitting free \(S^1\) actions with simply connected quotients.    
\end{abstract}

\maketitle

Many closed manifolds are known to admit Riemannian metrics of positive Ricci curvature, for example, all compact, simply connected homogeneous spaces, biquotients, and cohomogeneity one manifolds, see \cite{Be}, \cite{GZric}, \cite{ST}.  Systematic methods for constructing such metrics on certain connected sums and bundles have been explored in \cite{CG}, \cite{GPT}, \cite{N}, \cite{SW}, \cite{SY1}, \cite{Wcs}.

Once we know that a manifold admits positive Ricci curvature we ask how many such metrics it admits.  The space of geometrically distinct metrics of positive Ricci curvature on a manifold \(M\) is the moduli space \(\mods{Ric}(M)=\mathfrak{R}_{\text{Ric}>0}(M)/\text{Diff}(M),\) where \(\mathfrak{R}_{\text{Ric}>0}(M)\) is the set of positive Ricci curvature metrics on \(M\) and  Diff\((M)\) is the diffomorphism group, acting by pullbacks.  The number of path components of \(\mods{Ric}\) serves as a coarse measure of distinct positive Ricci curvature metrics on \(M\).  

We  identify an infinite family of 5-manifolds \(M\) with \(\pi_1(M)=\f{Z}_2\) such that \(\mods{Ric}(M)\) has infinitely many path components. 

\begin{main}\label{thm}
	Let \(B^4=\#^a\f{C}P^2\#^b\overline{\f{C}P^2}\), \(a+b\geq2\), and let \(S^1\to M^5\to B^4\) be a principal bundle with first Chern class 2d, where \(d\in H^2(B^4,\f{Z})\) is  primitive and \(w_2(TB^4)=d\) mod 2.  Then \(\mods{Ric}(M^5)\) has infinitely many path components.  
	
\end{main}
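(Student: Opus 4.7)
The plan is to exhibit an infinite sequence of Ricci positive metrics on $M^5$ and to distinguish their path components in $\mods{Ric}(M^5)$ via the $\eta$-invariant of a spin$^c$ Dirac operator. The base $B^4 = \cpcs{a}{b}$ carries metrics of positive Ricci curvature by the connected-sum constructions of Sha--Yang (\cite{SY1}) and others, and any such metric $g_B$ lifts via a principal connection $\theta$ on $S^1 \to M^5 \to B^4$ to a Kaluza--Klein metric $g_\eps = \pi^* g_B + \eps^2\,\theta\otimes\theta$ which is Ricci positive for $\eps > 0$ sufficiently small (by the O'Neill formulas). To produce \emph{many} such metrics on the \emph{fixed} diffeomorphism type $M^5$, I would realize $M^5$ as the total space of an infinite family of such bundles, with varying bases $B_n = \cpcs{a_n}{b_n}$ and varying primitive classes $d_n$ all satisfying the hypothesis. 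This is where the classification of 5-manifolds promised in the abstract---those with $\pi_1 = \Z_2$ admitting a free $S^1$-action with simply connected quotient---enters: it should reduce the diffeomorphism type of the total space to a small set of invariants (linking form, $w_2$, etc.), so that infinitely many tuples $(a_n, b_n, d_n)$ yield a common $M^5$.

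The hypothesis $w_2(TB^4) \equiv d \pmod 2$ means $\pi^* d \in H^2(M^5;\Z)$ is an integral lift of $w_2(TM^5) = \pi^* w_2(TB^4)$, and hence $M^5$ carries a canonical spin$^c$ structure. With metric $g_\eps$ and adapted connection $A$ on the spin$^c$ line bundle, the spin$^c$ Dirac operator satisfies the Schr\"odinger--Lichnerowicz identity
\[
D_A^2 = \nabla^*\nabla + \tfrac{s}{4} + \tfrac{1}{2}\, F_A\,\cdot\,,
\]
and positive Ricci curvature (hence $s>0$) combined with a bound on $|F_A|$ (controllable by scaling $\eps$) forces $\ker D_A = 0$. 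The spectral $\eta$-invariant $\eta(g_\eps, A)$ therefore varies smoothly along Ricci positive paths, and a suitable reduction $\bar\eta$ modulo an integrality lattice is a locally constant, diffeomorphism-invariant function on $\mods{Ric}(M^5)$.

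To evaluate $\bar\eta(g_n, A_n)$ for the sequence of metrics above, I would apply the Atiyah--Patodi--Singer index theorem to a natural spin$^c$ zero-bordism $W_n^6$ of $(M^5, g_n)$---most conveniently the disk bundle of the complex line bundle over $B_n$ with first Chern class $2d_n$---yielding a formula for $\bar\eta$ as a rational combination of intersection numbers of $B_n$ involving $d_n$ and $w_2(TB_n)$.

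The main obstacle is reconciling the two directions: the freedom to vary $(B_n, d_n)$ is tightly constrained by the requirement that the bundles all have total space diffeomorphic to the fixed $M^5$, while distinguishing components of $\mods{Ric}(M^5)$ requires the computed $\eta$-invariants to occupy infinitely many residue classes modulo the indeterminacy. The hypothesis $a+b\geq 2$ is likely essential precisely here, providing enough rank in $H^2(B^4;\Z)$ to vary the primitive class $d$ while preserving the diffeomorphism invariants of the total space.
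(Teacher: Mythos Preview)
Your overall strategy matches the paper's: lift Ricci-positive metrics from the base along infinitely many distinct bundle structures on the fixed $M^5$, and separate them via the $\eta$-invariant of a spin$^c$ Dirac operator, computed by APS on the associated disk bundle. Two points where the paper sharpens or corrects your outline deserve comment.

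\textbf{Fix the base, vary only the class.} There is no need to let $(a_n,b_n)$ vary. The paper's classification (its Theorem~B, together with a computation of the pin$^+$ bordism class of a characteristic submanifold) shows that for the \emph{fixed} base $B=\cpcs{a}{b}$, infinitely many primitive classes $d_k=(1+2k,1,\dots,1)$ with $w_2(TB)\equiv d_k$ mod~$2$ yield bundles whose total spaces are all diffeomorphic to $M$. This is exactly where $a+b\ge 2$ enters, as you suspected, and it keeps the bookkeeping far simpler than varying the base.

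\textbf{No reduction of $\eta$ is needed---and reducing would be fatal.} Your appeal to a ``reduction $\bar\eta$ modulo an integrality lattice'' is the Kreck--Stolz mechanism from dimension $4n+3$, and here it would destroy the argument. Because $\dim M=4n+1$, the canonical class $\pi^*d$ and all real Pontryagin classes of $M$ are torsion (indeed $H^4(M;\R)=0$). Under these torsion hypotheses the paper proves that the \emph{raw} $\eta$-invariant, with a flat connection on the canonical bundle, is constant along positive-scalar-curvature paths: on the cylinder $M\times I$ the APS integrand $e^{c_1/2}\hat A$ pairs exact forms against exact forms on the ends, and a Stokes argument kills the integral. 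This is essential, since the explicit value
\[
\eta(D^c_{g_k,\bar\nabla})=-\tfrac{1}{16}\bigl(\pm 4k^2\pm 4k+2\,\mathrm{sign}(B)\bigr)
\]
takes only finitely many values modulo $\Z$ (as $k^2+k$ is always even). So the ``main obstacle'' you flag at the end is not resolved by passing to a quotient; it dissolves once you note that no quotient is needed.

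A minor point: Sha--Yang \cite{SY1} treats $\#^c S^2\times S^2$; for $\cpcs{a}{b}$ with arbitrary $a,b$ the paper instead adapts Perelman's gluing construction \cite{P}.
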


\noindent Here \(w_2\) is the second Stiefel-Whitney class and a primitive class is one that is not a positive integer multiple of any other. We will see that for each 4-manifold \(B\) there are 2, 3 or 4 diffeomorphism types of such total spaces \(M\), depending on the value of \(|a-b|\) mod 4, each of which admits infinitely many inequivalent free \(S^1\) actions with quotient \(B.\)
  The only other five dimensional manifolds for which \(\mods{Ric}\)  is known to have infinitely many components are the four homotopy real projective spaces recently described by Dessai and Gonz\'{a}lez-\'{A}lvaro \cite{DG} and five quotients of \(S^2\times S^3\) recently described by  Wermelinger \cite{We}.  

The conditions on the first Chern class in \tref{thm} are equivalent to the statement that \(\pi_1(M^5)=\f{Z}_2\), \(M^5\) is non-spin, and the universal cover of \(M^5\) is spin.  \(M^5\) can be constructed by taking five dimensional homotopy real projective spaces, removing  tubular neighborhoods of generators of the fundamental group, and gluing  along the boundaries of the tubular neighborhoods.  By the classification of Smale \cite{Sm} and  Barden \cite{B}, the universal cover \(\tilde M^5\) is diffeomorphic to \(\#^{a+b-1}S^3\times S^2.\) But we do not know an explicit description of the deck group action by \(\f{Z}_2\) on \(\tilde{M^5}\). We construct the metrics in \tref{thm} by lifting metrics from quotients of \(M^5\) by free \(S^1\) actions.

Our second theorem identifies   conditions under which \(M^5\) admits one, and infinitely many, free \(S^1\) actions.   Here \(b_2(M)\) is the second Betti number of \(M\).

\begin{main}\label{topthm}
	Let \(M^5\) be a 5-manifold with \(\pi_1=\f{Z}_2.\) Then \(M\) admits a free \(S^1\) action with a simply connected quotient if and only if \(M\) is orientable, \(H_2(M,\f{Z})\) is torsion free and \(\pi_1(M)\) acts trivially on \(\pi_2(M).\)  Furthermore if \(b_2(M)=0\) then \(M\) is diffeomorphic to \(\ar{}P^5\).  If \(b_2(M)>0\) and \(M\) admits a free \(S^1\) action with simply connected quotient \(B^4\) then \(M\) admits infinitely many inequivalent free \(S^1\) actions with quotients diffeomorphic to \(B^4\).    
	
	\end{main}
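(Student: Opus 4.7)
\emph{Plan, necessity.} Suppose $\pi\colon M\to B^4$ is a principal $S^1$-bundle with $B^4$ simply connected. Orientability of $M$ follows from that of $B^4$ together with triviality of the vertical bundle. The homotopy long exact sequence forces the Euler class to be $c_1=2d$ with $d\in H^2(B^4,\Z)$ primitive, since the image of $\partial\colon \pi_2(B^4)=H_2(B^4)\to \pi_1(S^1)=\Z$ is $k\Z$ where $k$ is the divisibility of $c_1$. The Gysin sequence identifies $H^3(M)\cong\ker(\cup 2d\colon H^2(B^4)\to H^4(B^4))$, a primitive sublattice of rank $b_2(B^4)-1$ by unimodularity of the intersection form and primitivity of $d$, hence torsion free; Poincar\'e duality $H_2(M)\cong H^3(M)$ then gives that $H_2(M)$ is torsion free. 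The lifted bundle $\tilde M\to B^4$ is principal with primitive Chern class $d$, so $\tilde M$ is simply connected, and the deck transformation of $\tilde M\to M$ is fiber rotation by $\pi$; it acts trivially on $B^4$, hence on $H^2(\tilde M)=H^2(B^4)/\langle d\rangle$ via Gysin, and therefore trivially on $\pi_2(\tilde M)=\pi_2(M)$.

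\emph{Plan, sufficiency.} Given the three conditions, I would pass to the universal cover $\tilde M$. Using the Cartan--Leray spectral sequence for $\tilde M\to M$, the triviality of the $\pi_1$-action on $\pi_2$, and the torsion-freeness of $H_2(M)$, one can pin down $H_2(\tilde M)$ up to possible $2$-torsion; combined with the Smale--Barden classification, this identifies $\tilde M$ up to diffeomorphism as a connected sum of copies of $S^2\times S^3$ and of the non-trivial $S^3$-bundle over $S^2$. One then realises $\tilde M$ as the total space of a principal $S^1$-bundle over a suitable simply connected $4$-manifold $B^4$ with primitive Chern class $d$, and arranges fiber rotation by $\pi$ to coincide with the given deck action; descending yields the desired free $S^1$-action on $M$ with quotient $B^4$. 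The hard part will be matching the fiber rotation to the prescribed deck action, which is the essence of the classification of $5$-manifolds with $\pi_1=\f{Z}_2$ satisfying our conditions that the paper announces is developed in tandem.

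\emph{Plan, special cases.} If $b_2(M)=0$, the Gysin computation forces $b_2(B^4)=1$, so $\tilde M\to B^4$ has $H_2(\tilde M)=0$ and $\tilde M\cong S^5$ by Smale; hence $M$ is a $\f{Z}_2$-quotient of $S^5$ with a compatible $S^1$-action over $\f{C}P^2$, which yields $M\cong\RP^5$. For the infinitely many actions when $b_2(M)\geq 1$: fix one quotient $B^4$, so $b_2(B^4)\geq 2$ and the intersection form admits primitive classes $d'$ achieving infinitely many distinct values of $d'\cup d'$. By the classification, only finitely many diffeomorphism types of total space occur over $B^4$, so infinitely many such $d'$ yield total space diffeomorphic to $M$. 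Any diffeomorphism of $M$ intertwining two of the resulting $S^1$-actions would descend to a self-diffeomorphism of $B^4$ carrying $d$ to $d'$ and so preserving $d\cup d$; hence infinitely many of the constructed actions are pairwise inequivalent.
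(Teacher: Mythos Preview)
Your necessity argument is correct and matches the paper's Lemma~\ref{buntop}. The remaining parts contain genuine gaps.

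For \(b_2(M)=0\): you correctly reach \(\tilde M\cong S^5\), but the step ``which yields \(M\cong\RP^5\)'' is unjustified. There are four smooth free \(\Z_2\)-quotients of \(S^5\)---the homotopy \(\RP^5\)'s \(X(1),X(3),X(5),X(7)\)---and you have not explained why the presence of a compatible free \(S^1\) action singles out the standard one. Note too that the quotient \(B^4\) is only known to have the cohomology ring of \(\CP^2\); asserting \(B\cong\CP^2\) would assume the smooth Poincar\'e conjecture for \(\CP^2\). The paper closes this gap by computing the Hambleton--Su invariant from the cohomology of \(B\) alone: the pin\(^+\) cobordism class of a characteristic submanifold is \(\beta(B,d)=\langle d^2,[B]\rangle=\pm1\in\Z_{16}\), which by Theorem~\ref{hs1} forces \(M\cong X(1)=\RP^5\).

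For sufficiency and the infinitely-many-actions claim, your approach diverges from the paper's and the hard step is left open. The paper does not pass to \(\tilde M\) and attempt to match a deck action; it applies the Hambleton--Su classification directly to \(M\), where the diffeomorphism type is determined by the type (I/II/III), \(b_2(M)\), and the cobordism class \([P]\) of a characteristic submanifold. The key technical ingredient is a homomorphism \(\beta\colon\Omega_4^{\mathrm{Spin}^c}\to\Omega_4^{\mathrm{Pin}^+}\) that computes \([P]\) from the pair \((B,d)\), evaluated explicitly in Lemma~\ref{betaval}. With this the paper writes down, for each type and each admissible \(B\), explicit infinite families \(d_k\in H^2(B,\Z)\) whose associated total spaces all have the same Hambleton--Su invariants as \(M\), hence are diffeomorphic to \(M\). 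Your pigeonhole argument (``only finitely many diffeomorphism types occur over \(B\)'') presupposes exactly this classification, which you have not established; and your sufficiency plan leaves the matching of the deck action to fiber rotation---equivalent in content to realizing a prescribed \([P]\)---as an unexecuted black box. Your inequivalence argument via \(\langle d^2,[B]\rangle\) is fine once those points are in place.
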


    Note that here \(B^4\) can be any simply connected 4-manifold, and need not be one of the manifolds of \tref{thm}.  \tref{bases} provides greater detail about the correspondence between a 5 manifold \(M^5\) and the set \(Q(M)\) of possible quotients \(B^4=M^5/S^1\).  Given  \(M^5\) satisfying the hypotheses of \tref{topthm}, we give conditions on the cohomology ring of a 4-manifold \(B^4\) which are necessary and sufficient for \(B\) to be in \(Q(M).\)  In particular, any smooth manifold homeomorphic to a manifold in \(Q(M)\) is in \(Q(M).\)  In \cref{goodbases} we see that for any such \(M\),  \(Q(M)\) contains either \(\#^cS^2\times S^2\) or \(\cpcs{a}{b}\) for some \(a,b,c\in\f{Z}.\) Those manifolds admit metrics with positive Ricci curvature, which can be lifted to \(M\).  Thus we have:
    
    \begin{cor*}
    		Let \(M\) be a 5 manifold with \(\pi_1(M)=\f{Z}_2\) admitting a free \(S^1\) action with a simply connected quotient.  Then \(M\) admits a metric with positive Ricci curvature. 
    \end{cor*}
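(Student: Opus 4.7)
The plan is to reduce the question to putting a Ricci positive metric on a convenient simply connected 4-manifold in $Q(M)$ and then lifting via the principal $S^1$ bundle. By \tref{topthm}, the hypotheses already guarantee that $M$ is orientable with $H_2(M,\Z)$ torsion free and trivial $\pi_1$ action on $\pi_2$. I split into the two cases appearing in \tref{topthm}.

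If $b_2(M)=0$, the second part of \tref{topthm} identifies $M$ with $\RP^5$, which carries the round metric of constant positive sectional curvature and hence positive Ricci curvature. If $b_2(M)>0$, invoke \cref{goodbases} (as previewed in the paragraph preceding the corollary) to select a quotient $B^4 \in Q(M)$ diffeomorphic either to $\#^c S^2\times S^2$ or to $\cpcs{a}{b}$ for some nonnegative integers. On both of these families of simply connected 4-manifolds, positive Ricci curvature metrics are classical: Sha--Yang produced Ricci positive metrics on arbitrary connected sums $\#^k S^2 \times S^2$, and the same techniques, together with the constructions of Perelman and of Schwachh\"ofer--Tuschmann, give Ricci positive metrics on $\cpcs{a}{b}$. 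Fix such a metric $g_B$ on the chosen base.

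The last step is to lift $g_B$ to a Ricci positive metric on the total space of the principal $S^1$ bundle $S^1 \to M \to B$. Choose a principal connection $\theta$ on $M \to B$ and form the submersion metric $g_M = \pi^* g_B + t^2\, \theta \otimes \theta$ for a small parameter $t>0$. A direct application of the O'Neill formulas for a Riemannian submersion with totally geodesic circle fibers shows that as $t \to 0$ the Ricci curvatures of $g_M$ in horizontal directions converge uniformly to those of $g_B$ (up to the positive contribution $\tfrac{t^2}{2}\lvert A\rvert^2$ from the curvature of the connection), while the Ricci curvature in the vertical direction is $\tfrac{t^2}{4}\lvert A\rvert^2$. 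Since $B$ is compact with $\Ric_{g_B}>0$, for all sufficiently small $t$ the metric $g_M$ has positive Ricci curvature; this canonical-variation argument is standard (see \cite{Be}, \cite{GPT}).

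The only real obstacle is conceptual rather than computational: one has to know that at least one element of $Q(M)$ belongs to a family on which Ricci positive metrics are already available. That is precisely the content of \cref{goodbases}, so once that corollary is in place the proof above is a short application of existing results. No additional estimates are needed because the curvature of the connection form $\theta$ contributes only nonnegatively to the horizontal Ricci tensor at order $t^2$, so compactness of $B$ and $\Ric_{g_B}>0$ suffice to guarantee $\Ric_{g_M}>0$ for small $t$.
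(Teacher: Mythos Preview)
Your overall strategy matches the paper's: use \cref{goodbases} to find a base $B\in Q(M)$ that is either $\#^cS^2\times S^2$ or $\cpcs{a}{b}$, put a Ricci positive metric on $B$ (Sha--Yang or Perelman), and lift. The paper does exactly this, citing \cite{GPT} for the lifting step.

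The gap is in your lifting argument. You correctly compute that in the canonical variation $g_M=\pi^*g_B+t^2\theta\otimes\theta$ the vertical Ricci is $\tfrac{t^2}{4}|A|^2$, but then you conclude $\Ric_{g_M}>0$ for small $t$ without checking that $|A|>0$ everywhere. The $A$-tensor is essentially the curvature $2$-form $F$ of the chosen connection, and a closed $2$-form in a nonzero cohomology class on a $4$-manifold can certainly vanish at points; when it does, your vertical Ricci is zero there for every $t$. Your final paragraph asserts that ``no additional estimates are needed,'' but it addresses only the horizontal directions and ignores precisely this issue. Notice also that your argument never uses the hypothesis $\pi_1(M)$ finite, which is a sign that something is missing: for a trivial or flat $S^1$ bundle the canonical variation never produces positive Ricci in the fiber direction.

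This is exactly why the paper invokes the theorem of Gilkey--Park--Tuschmann rather than a bare canonical variation: their result takes as input $\Ric_{g_B}>0$, compact connected structure group, and \emph{finite} $\pi_1$ of the total space, and outputs an invariant Ricci positive metric. Either cite \cite{GPT} directly (as the paper does), or, if you want to salvage the canonical variation, you must first argue that for the specific bases and Chern classes arising from \cref{goodbases} one can choose a principal connection whose curvature form is nowhere zero; this is not automatic and you have not done it.
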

    
    \noindent Furthermore, it follows from \tref{bases} that given a simply connected 4-manifold \(B^4,\) the set of diffeomorphism types of total spaces \(M^5\) with \(\pi_1(M^5)=\f{Z}_2\) of \(S^1\) bundles over \(B^4\) depends only on the cohomology ring of \(B^4\).  In particular \tref{thm} would describe the same set of 5-manifolds if we replaced \(\cpcs{a}{b}\) with one of the manifolds homeomorphic to it.



\smallskip

We first review previous work with methods and results relevant to \tref{thm}.  In \cite{KS} Kreck and Stolz invented a moduli space invariant \(s(M,g)\in\f{Q}\) for a metric \(g\) of positive scalar curvature on a closed spin manifold \(M\).  The metric is based on the \(\eta\) spectral invariant of the Dirac operator defined in \cite{APS}.  If \(s(M,g_1)\neq s(M,g_2)\) then \(g_1\) and \(g_2\) represent elements in different path components of \(\mods{scal}\).  The authors use the invariant to prove that for \(M^{4k+3}\) with a unique spin structure and vanishing rational Pontryagin classes \(\mods{scal}(M)\) is either empty or has infinitely many components.  

Since a path of Riemannian metrics which maintains positive Ricci curvature  maintains positive scalar curvature as well, the \(s\) invariant can detect connected components of \(\mods{Ric}\).  Kreck and Stolz calculated \(s\) for the Einstein metrics on \(S^1\) bundles \(N^7_{k,l}\) over \(\f{C}P^1\times \f{C}P^2\) described by Wang and Ziller \cite{WZ}.  Using the diffeomorphism classification in \cite{KS0}, they showed that when \(k\) is even and gcd\((k,l)=1\), \(N_{k,l}\) is diffeomorphic to infinitely many manifolds in the same family.  As the \(s\) invariant takes infinitely many values on those metrics, the authors concluded that \(\mods{Ric}(N_{k,l})\) has infinitely many components.  Similar results have since been proved for \(S^1\) bundles over \(\f{C}P^{1}\times\f{C}P^{2n}\), \(n\geq1\), see \cite{DKT}.   

Wraith showed that for a homotopy sphere \(\sigma^{4k-1}\) bounding a parallelisable manifold,  \(\mods{Ric}(\sigma)\) has infinitely many components.  The procedure known as plumbing with disc bundles over spheres produces infinitely many parallelisable manifolds with boundaries diffeomorphic to \(\sigma\).  Wraith constructed metrics of positive Ricci curvature on each boundary in \cite{Wes} and calculated the \(s\) invariant of each metric in  in \cite{Wmo}.  

Dessai \cite{D} and the author \cite{G} used the \(s\) invariant to find several infinite families of 7-dimensional sphere bundles \(M^7\) such that \(\mods{Ric}(M)\) and \(\modse{sec}(M)\) have infinitely many path components.  Grove and Ziller \cite{GZms, GZlift}  constructed metrics of nonnegative sectional curvature on the manifolds in those families, and the diffeomorphism classifications in \cite{CE} and \cite{EZ} show that each manifold is diffeomorphic to infinitely many other members of the family.       

More recently, Dessai and Gonz\'{a}lez-\'{A}lvaro \cite{DG} showed that if \(M^5\) is one of the four closed manifolds homotopy equivalent to \(\ar{}P^5\) then \(\modse{sec}(M)\) and \(\mods{Ric}(M)\) have infinitely many path components.    L\'{o}pez de Medrano \cite{L} showed that each such \(M^5\) admits infinitely many descriptions as a quotient of a Brieskorn variety, and Grove and Ziller showed the each quotient admits a metric of nonnegative sectional curvature \cite{GZric}. Dessai and  Gonz\'{a}lez-\'{A}lvaro calculated the relative \(\eta\) invariant for those metrics to distinguish the path components.  Wermelinger extended their method to prove the same conclusion for five \(\f{Z}_2\) quotients of \(S^2\times S^3\) in \cite{We}.


We now outline the proof of \tref{thm}.  We use \tref{topthm} to show that each manifold \(M^5\) in \tref{thm} admits infinitely many inequivalent free \(S^1\) actions with quotient \(B^4=\#^a\f{C}P^2\#^b\overline{\f{C}P^2}.\) We modify a result of Perelman \cite{P} to show that \(B\) admits a metric of positive Ricci curvature.  That metric can be lifted to a metric of positive Ricci curvature on \(M\) by \cite{GPT}.  The lifted metrics depend on the \(S^1\) action, and  we get infinitely many distinct metrics on \(M\). 

 We  show that in dimensions \(4k+1,\) the \(\eta\) invariant of a certain spin\(^c\) Dirac operator constructed for a positive Ricci curvature metric \(g\) depends only on the connected component of the class of \(g\) in \(\mods{Ric}\).    To complete the proof we calculate \(\eta\) for each metric on \(M\) and show that it obtains infinitely many values.  This is the most intricate part of our proof.     
   
   The standard method for calculating the \(\eta\) invariant of a spin Dirac operator on a manifold \(M\) with positive scalar curvature is to extend the metric over a manifold \(W\) with \(\partial W=M\) such that the extension has positive scalar curvature as well.  When \(M\) is not spin but spin\(^c,\)  both the metric and a  unitary connection on the complex line bundle associated to the spin\(^c\) structure must be extended.  The desired condition then involves the curvatures of both metric and connection.  In their work, Dessai and Gonz\'{a}lez-\'{A}lvaro passed to the universal cover to find a suitable \(W\) over which the connection could be extended to a flat connection.  They use equivariant \(\eta\) invariants on the cover to compute the \(\eta\) invariant on the quotient.  
   
   In this paper we work directly on \(M\) and use  a manifold with boundary \(W\) over which the connection cannot be extended to a flat connection, but the curvature of the extension can be explicitly controlled.  To be specific, we extend the metric and connection on \(M\) to a metric \(h\) and connection \(\nabla\) on the disc bundle \(W=M\times_{S^1}D^2\) associated to the \(S^1\) bundle.  We then use the Atiyah-Patodi-Singer index theorem \cite{APS} to obtain a formula for  \(\eta\) in terms of the index of the spin\(^c\) Dirac operator on \(W\) and topological data on \(W\).  The index will vanish as long as
   \[\text{scal}(h)>2|F^\nabla|_h\]
   where \(F^\nabla\) is the curvature form of the connection \(\nabla.\)  We accomplish the extension for a general class of \(S^1\) invariant metrics of positive scalar curvature.  This is more general than we need but may be of independent interest. In fact we construct \(h\) and \(\nabla\) such that 
    \[\text{scal}(h)>\ell|F^\nabla|_h\]
   where \(\ell\) is a positive integer such that the first Chern class of the \(S^1\) bundle is \(\ell\) times the canonical class of a spin\(^c\) structure on the quotient.

  Sha and Yang  constructed metrics of positive Ricci curvature on the 4-manifolds \(\#^{a-b}\f{C}P^2\#^b S^2\times S^2\), \(a>b\), in \cite{SY}.  Those manifolds are diffeomorphic to \(\#^{a}\f{C}P^2\#^{b}\overline{\f{C}P^2}\), and so a manifold \(M\) satisfying the hypotheses of \tref{thm} also admits a free \(S^1\) action with quotient \(\#^{a-b}\f{C}P^2\#^bS^2\times S^2\).  One can lift the Sha-Yang metric to \(M\), and there is no reason to expect that the resulting metric lies in the same component as the metric lifted from \(\cpcs{a}{b}\) in the proof of \tref{thm}.  We will see, however, that the computation of the \(\eta\) invariant involves only the cohomology ring of the quotient, and we cannot distinguish any new components in this way.
  
  In \cite{SY1} Sha and Yang also found metrics of positive Ricci curvature on \(\#^b S^2\times S^2.\)  One might expect our methods to yield a similar result in this case.  The 5-manifolds, however, would be spin, and the eta invariant of the spin Dirac operator in dimension 4k+1 vanishes, even when twisted with certain complex line bundles, see \cite{BG1}.    

We now discuss  \tref{topthm}.  In \cite{HS}, Hambleton and Su find a complete diffeomorphism classification of 5-manifolds \(M\) with \(\pi_1(M)=\f{Z}_2\) when \(M\) is orientable, \(H_2(M,\f{Z})\) is torsion free, and \(\pi_1(M)\) acts trivially on \(\pi_2(M).\)  They apply the classification to investigate the diffeomorphism type of the total space of an \(S^1\) bundle over a simply connected 4-manifold.  When the total space is non-spin but has a spin universal cover, as is the case in \tref{thm}, they can only restrict the diffeomorphism type to two possibilities.  Furthermore, an error is present in that calculation, which we correct in \lref{betaval}.

To prove \tref{topthm}, we use the data of a principal \(S^1\) bundle, namely the base and the first Chern class, to compute the diffeomorphism invariants used by Hambleton and Su for the total space. One, the second Betti number, is calculated easily. When the total space is non-spin but has a spin universal cover, we show how the other invariant  can be computed by applying a map from  \(\Omega_4^{\text{Spin}^c}\to\Omega_4^{\text{Pin}^+}\) to the base.  While a two-fold ambiguity remains in determining which diffeomorphism type corresponds to a specific first Chern class, we are nonetheless able to determine which pairs of invariants are achieved, and achieved infinitely many times, by bundles over a given 4-manifold.  

The paper is organized as follows.  In \sref{diffeo} we examine \(S^1\) actions on 5-manifolds with \(\pi_1=\f{Z}_2\) and prove \tref{topthm}.  In \sref{ettta} we discuss the \(\eta\) invariant of a spin\(^c\) Dirac operator and show that it can be used to detect connected components of the moduli space in the context of \tref{thm}.  In \sref{proof} we compute \(\eta\)  in the case of certain \(4n+1-\)manifolds admitting free \(S^1\) actions and prove \tref{thm}.  In \sref{mandc} we construct the metrics and connections used in the computations of \sref{proof}.    

 I would like to acknowledge my PhD advisor Wolfgang Ziller for all his help, as well as Anand Dessai, David Gonz\'{a}lez-\'{A}lvaro, Fernando Galaz-Garc\'{i}a, and Diego Corro for helpful discussions.  I am further grateful to Yang Su for pointing out how to work around an error in  \cite{HS} and for other useful insights.  

\section{\(S^1\) actions on 5 manifolds with \(\pi_1=\f{Z}_2\)}\label{diffeo}
Our methods for constructing metrics with positive Ricci curvature and for calculating \(\eta\)  use the structure of a principal \(S^1\) bundle.  In this section we prove \tref{bases}, which classifies 5-manifolds with \(\pi_1=\f{Z}_2\) admitting one, or infinitely many, free \(S^1\) actions with simply connected quotients.  \tref{bases} also identifies those quotients.  In particular, we prove \tref{topthm} and show that a manifold \(M^5\) satisfying the hypotheses of \tref{thm} admits infinitely many inequivalent \(S^1\) actions with the same quotient. Our proof relies on a diffeomorphism classification of 5-manifolds with fundamental group \(\f{Z}_2\) carried out by Hambleton and Su \cite{HS}. 

Given a manifold \(M\) with \(\pi_1(M)=\f{Z}_2\), a characteristic submanifold  \(P\subset M\) is defined as follows.  For \(N\) sufficiently large let \(f:M\to\ar{}P^N\) be a classifying map of the universal covering \(\tilde{M}\to M\).  We can choose \(f\) to be transverse to \(\ar{}P^{N-1}\), and hence \(P=f^{-1}(\ar{}P^{N-1})\) is a smooth manifold.  One checks that any two manifolds defined in this way are cobordant.

Alternatively, assume that \(P\subset M\) is a submanifold such that the inverse image \(\tilde P\subset \tilde M\) under the universal covering splits \(\tilde M\) into two components \(\tilde M_1\) and \(\tilde M_2\).   Furthermore \(\partial \tilde M_1=\partial \tilde M_2=\tilde P\) and the covering transformation acting on \(\tilde M\) switches \(\tilde M_1\) and \(\tilde M_2.\)  One can then construct a map \(f:M\to\ar{}P^N\) such that \(P=f^{-1}(\ar{}P^{N-1}).\)   For details see \cite{GT} and \cite{L}.

The key invariant of the classification in \cite{HS} is the class of \(P\) in an appropriate cobordism group.  The appropriate structure on \(P\) depends on the second Stiefel-Whitney classes \(w_2\) of \(M\) and \(\tilde M\).  Hambleton and Su use the following labels for a manifold \(M\) with \(\pi_1(M)=\f{Z}_2\) and universal cover \(\tilde M\):

\[\begin{array}{cc}
\text{Type I} & w_2(T\tilde M)\neq0 \\\text{Type II}&  w_2(TM)=0\\\text{Type III}&  w_2(TM)\neq 0\text{ and }w_2(T\tilde M)=0.\end{array}\]  

\noindent A characteristic submanifold \(P\) of a Type III manifold admits a pin\(^+\) structure, and all such \(P\) are pin\(^+\) cobordant.  Here Pin\(^\pm(n)\) is the extension of \(O(n)\) by \(\f{Z}_2\) such that a preimage of a reflection squares to \(\pm1\) and \(\Omega^{\text{Pin}^\pm}_n\) is the cobordism group of \(n-\)manifolds with pin\(^\pm\) structures.  For details, see  \cite{HS}, \cite{GT}. 

We review the construction of a pin\(^+\) structure on \(P\) as we will use it later.  Let \(\mu=\tilde M\times_{\f{Z}_2}\ar{}\) be the unique nontrivial real line bundle over \(M.\)  Recall that \(\tilde {M}=\tilde M_1\cup_{\tilde{P}}\tilde M_2\) and the covering transformation exchanges the components. Thus the normal bundle \(N\tilde P\) of \(\tilde{P}\) is trivial and the covering transformation reverses the orientation of the fibers.   The normal bundle \(NP\) of \(P\) satisfies
\[NP=N\tilde P/\f{Z}_2\cong\tilde P\times _{\f{Z}_2}\ar{}=\mu|_P.\]
Since \(M\) is orientable, 
\[w_1(NP)=w_1(TP)=w_1(\text{det}(TP))\]
so \(NP\cong \text{det}(TP).\)
Thus
\begin{equation}\label{pbuniso}(TM\oplus2\mu)|_P=TP\oplus 3NP=TP\oplus 3\text{det}(TP).\end{equation}
Using \cite{GT} Lemma 9 and \cite{HS} Lemma 2.3, one checks that \(w_2(TM\oplus 2\mu)=0\). We can apply \cite{KT} Lemma 1.7 to see that a spin structure on \(TP\oplus 3\text{det}(TP)\) induces a pin\(^+\) structure on \(TP.\)    A similar argument on a cobordism shows that any two characteristic submanifolds are pin\(^+\) cobordant.  


Let \(b_2(M)\) denote the second Betti number of a manifold \(M.\) The main theorem for Type III manifolds in \cite{HS} is Theorem 3.1:

\begin{thm}\label{hs1}\cite{HS}
	Let \(M_1,M_2\) be Type III 5-manifolds such that \(\pi_1(M_i)\cong\f{Z}_2\) acts trivially on \(\pi_2(M_i)\) and \(H_2(M_i,\f{Z})\) is torsion free for \(i=1,2\).  Then \(M_1\) is diffeomorphic to \(M_2\) if and only if 
	
	\[b_2(M_1)=b_2(M_2)\text{\ \  and \ \  }
	[P_1]=\pm[P_2]\in\Omega^{\text{\normalfont Pin}^+}_{4}\]
	where \(P_i\) is a characteristic submanifold of \(M_i.\)
\end{thm}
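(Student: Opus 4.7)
The necessity direction is immediate: a diffeomorphism $\phi:M_1\to M_2$ induces an isomorphism on $H_2$, and by transversality it may be arranged so that $\phi(P_1)$ is a characteristic submanifold of $M_2$, hence pin$^+$ cobordant to $P_2$. The sign ambiguity reflects the two choices involved in building the pin$^+$ structure on $P_i$: interchanging the two components of $\tilde M_i\setminus\tilde P_i$ reverses the fiber orientation of the normal bundle $N\tilde P_i$, changing the induced pin$^+$ structure by the canonical involution.

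For sufficiency, the plan is to use Kreck's modified surgery theory. By \eqref{pbuniso} a Type III manifold $M$ satisfies $w_2(TM\oplus 2\mu)=0$, so its normal 2-type is a fibration $B\to B\O$ classifying pairs consisting of a $\f{Z}_2$-line bundle $\mu$ and a spin structure on $\nu_M\oplus 2\mu$.  I would first compute the normal bordism group $\Omega_5(B)$ and construct a homomorphism $\Omega_5(B)\to\Omega_4^{\text{Pin}^+}$ via the characteristic submanifold construction applied fiberwise in a bordism.  An Atiyah--Hirzebruch argument, together with the known low-dimensional spin bordism groups, should show this map is an isomorphism on 5-manifold bordism classes subject to the hypotheses on $b_2$, $H_2$, and the $\pi_1$-action on $\pi_2$.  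Given $M_1,M_2$ with matching invariants, this yields a normal bordism $W^6$ between them, after possibly reversing the pin$^+$ orientation of $P_2$ to absorb the $\pm$ sign.

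The remaining step is to upgrade $W$ to an $s$-cobordism. Surgery below the middle dimension makes $(W,\partial W)$ highly connected; the obstruction to finishing the job lies in a Wall group $L_6(\f{Z}[\f{Z}_2])$ with the orientation character coming from $\mu$. Using the torsion freeness of $H_2(M_i,\f{Z})$ and the trivial $\pi_1$-action on $\pi_2$ to arrange the surgery kernel as a stably free hermitian module with hyperbolic form, together with $\mathrm{Wh}(\f{Z}_2)=0$, the $s$-cobordism theorem produces the diffeomorphism $M_1\cong M_2$. The main obstacle I expect is the bordism computation and its matching with the pair $(b_2,[P])$: precisely identifying the summand of $\Omega_5(B)$ captured by the pin$^+$ class of the characteristic submanifold, and verifying that the complementary summand is controlled by $b_2$ alone under the stated hypotheses, is the technical core of the argument.
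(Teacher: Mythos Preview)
The paper does not give its own proof of this statement: it is quoted verbatim as Theorem~3.1 of Hambleton--Su \cite{HS} and used as a black box. So there is no proof in this paper to compare your proposal against.

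That said, your outline is in the right spirit---Hambleton and Su do use Kreck's modified surgery---but a few points in your sketch would need correction if you actually tried to carry it out. First, in Kreck's theory the obstruction to improving a normal $B$-bordism to an $s$-cobordism lies in the monoid $l_{6}(\f{Z}[\f{Z}_2])$, not in the Wall group $L_6$; one has to argue separately (often via stabilisation with copies of $S^3\times S^3$ inside $W$) that the obstruction is elementary, so that the Wall-group machinery applies. Second, the orientation character here is trivial (the $M_i$ are orientable); the subtlety is in the $w_2$-type, which is what forces the normal $2$-type to involve $\mu\oplus\mu$ and produces the pin$^+$ structure on $P$. Third, your proposed Atiyah--Hirzebruch computation of $\Omega_5(B)$ is not quite how the invariants $(b_2,[P])$ arise: $b_2$ is not a bordism invariant at all, but is fixed in advance by choosing the normal $2$-type to include the map to the correct Postnikov stage recording $\pi_2$. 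The pin$^+$ class of $P$ then detects the residual bordism obstruction. Your final paragraph correctly identifies this bordism calculation as the technical heart, but as written the role of $b_2$ is misplaced.
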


We will take the data of a principal \(S^1\) bundle, namely  the base and the first Chern class, and identify the diffeomorphism type of the total space.  In particular, we will identify when the total space satisfies the hypotheses of \tref{hs1}, and then compute \(b_2\) and \([P]\).  That computation combined with the classification of Type I and II total spaces in \cite{HS} Theorems 6.5 and 6.8 finishes the proof of \tref{bases}, which in turn implies \tref{topthm}.

A straightforward computation using the long exact homotopy and Gysin sequences proves the following; see for instance \cite{HS} Proposition 6.1.

\begin{lem}\label{buntop}
	Let \(B^n\) be a simply connected manifold and let \(M^{n+1}\to B^n\) be a non-trivial principal \(S^1\) bundle with first Chern class \(kd\), where \(d\) is a primitive element of \(H^2(B,\f{Z})\) and \(k\neq0\) is an integer.   Then \(M\) is orientable, \(H_2(M,\f{Z}) \) is torsion free and \(b_2(M)=b_2(B)-1\). \(\pi_1(M)\cong \f{Z}_k\) is generated by any \(S^1\) fiber and acts trivially on \(\pi_2(M)\).  The universal cover of \(M\) is the total space of an \(S^1\) bundle over \(B\) with first Chern class \(d\).  If \(k=2\),  \(M\) is type III if and only if  and \(w_2(TB)=d\) {\normalfont mod 2}.
\end{lem}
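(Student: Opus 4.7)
The plan is to let the two standard tools for a principal $S^1$ bundle---the homotopy long exact sequence and the Serre/Gysin spectral sequence---do all the work, using primitivity of $d$ and the hypothesis that $B$ is simply connected as inputs. Orientability of $M$ is immediate: the structure group $S^1$ is orientation preserving and a simply connected base is orientable.

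For the homotopy claims, I would write the tail
\[0=\pi_2(S^1)\to\pi_2(M)\to\pi_2(B)\xrightarrow{\partial}\pi_1(S^1)\cong\Z\to\pi_1(M)\to\pi_1(B)=0,\]
observe that $\partial$ is evaluation against the Euler (= first Chern) class $kd$, and use primitivity of $d$ to conclude that $\langle d,\cdot\rangle\colon H_2(B,\Z)\to\Z$ is surjective, so $\partial$ has image $k\Z$ and $\pi_1(M)\cong\Z/k$, generated by any $S^1$ fiber. Applying the same argument with $d$ in place of $kd$ yields a simply connected principal $S^1$ bundle $M'\to B$ with Chern class $d$, and the $k$-th power map on fibers exhibits $M'\to M$ as a $k$-sheeted covering, necessarily the universal cover. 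The action of $\pi_1(M)$ on $\pi_2(M)$ is trivial because its generator is represented by a loop along an $S^1$ fiber, and translation along such a loop is covered by the $S^1$-action on $M$, which is isotopic to the identity.

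For the homological and characteristic-class claims, I would use the homological Serre spectral sequence of $S^1\to M\to B$: with $B$ simply connected, $E^2_{p,q}=H_p(B,\Z)$ for $q\in\{0,1\}$ and vanishes otherwise, and in total degree $\leq 2$ the only nontrivial differential is $d^2\colon H_2(B)\to H_0(B)=\Z$, namely evaluation against the Euler class $kd$. Primitivity of $d$ makes $\langle d,\cdot\rangle$ surjective, so the kernel of this $d^2$ is free of rank $b_2(B)-1$, and the $E^\infty$ filtration then yields $H_2(M,\Z)\cong\Z^{b_2(B)-1}$, which is both torsion free and identifies $b_2(M)=b_2(B)-1$. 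For the Type III criterion at $k=2$, the splitting $TM\cong\pi^*TB\oplus\varepsilon^1$ gives $w_2(TM)=\pi^*w_2(TB)$ and similarly for $\tilde M$; running the Gysin sequence modulo $2$ on $M$ (whose Euler class $2d\equiv 0$) and on $\tilde M$ (whose Euler class is $d\not\equiv 0\bmod 2$ by primitivity) shows $\pi^*$ is injective mod $2$ while $\ker(\tilde\pi^*\bmod 2)=\langle d\bmod 2\rangle$, so $M$ is Type III exactly when $w_2(TB)=d\bmod 2$. The main subtlety I anticipate is the torsion-freeness of $H_2(M,\Z)$: it is not directly visible from the first segment of the cohomological Gysin sequence, and the cleanest route is through the homological spectral sequence above, where the relevant differential lands in the free group $H_0(B)=\Z$.
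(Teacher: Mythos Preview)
Your proposal is correct and follows essentially the same approach the paper indicates: the paper does not write out a proof but says the lemma is ``a straightforward computation using the long exact homotopy and Gysin sequences'' and refers to \cite{HS} Proposition 6.1, which is exactly what you do (your homological Serre spectral sequence argument is just the Gysin sequence in disguise for an $S^1$ bundle). One small point to make explicit: your claim that the kernel of $d^2\colon H_2(B)\to\Z$ is free of rank $b_2(B)-1$ tacitly uses that $H_2(B,\Z)$ itself is torsion free---this is automatic for the simply connected closed 4-manifolds the paper actually applies the lemma to (Poincar\'e duality plus $H_1=0$), but would need a word of justification for general $n$.
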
   

The condition \(w_2(TB)=d\) mod 2 implies the existence of a spin\(^c\) structure on \(B\).  We call \(d\) the canonical class of that spin\(^c\) structure.   On a simply connected manifold a spin\(^c\) structure is uniquely determined by its canonical class.   Thus in the Type III case, given a simply connected spin\(^c\) 4-manifold \(B^4\) with primitive canonical class \(d\), we want to know the diffeomorphism type of the total space \(M^5\) of the \(S^1\) bundle over \(B^4\) with first Chern class \(2d.\)  Since \(b_2(M)\) is  determined by \lref{buntop}, it remains to find the pin\(^+\) cobordism class of a characteristic submanifold \(P^4\subset M^5\).  In fact, the spin\(^c\) structure on \(B^4\) will naturally induce a pin\(^+\) structure on \(P^4\).  

To see this let \(\rho:M\to B\) be the bundle map and let \(\lambda\to B\) be a complex line bundle with first Chern class \(d.\)   \(\rho^*d\) is the unique nontrivial torsion element of \(H^2(M,\f{Z})\).
Let \(\mu\to M\) be the unique nontrivial real line bundle over \(M.\)  As in the proof that a characteristic submanifold of \(M\) will admit a pin\(^+\) structure, see  \cite{GT} Lemma 9 and \cite{HS} Lemma 2.3, \(w_2(\mu\oplus\mu)=w_1(\mu)^2\neq0\). So \(\mu\oplus\mu\) with its natural orientation is a nontrivial complex line bundle.  Since \(\mu\otimes\mu\) is trivial, \(c_1(\mu\oplus\mu)\) is torsion, and we conclude that \(\rho^*\lambda\cong\mu\oplus\mu.\)  

The \(S^1\) action on \(M\) splits \(TM\) into a horizontal bundle isomorphic to \(\rho^*TB\) and a vertical bundle, trivialized by an action field, which we call \(TS^1.\)  The spin\(^c\) structure on \(B\) is equivalent to a spin structure on \(TB\oplus\lambda.\)  That spin structure induces a spin structure on 
\begin{equation}\label{buniso}\rho^*(TB\oplus\lambda)\oplus TS^1\cong TM\oplus \mu\oplus\mu\end{equation}
 and in turn a pin\(^+\) structure on  \(P\subset M\) using \eqref{pbuniso}.  Denote by \(\beta(B,d)\in\Omega_4^{\text{Pin}^+}\) the cobordism class of \(P\) with this pin\(^+\) structure.  We synthesize the construction with the results of \lref{buntop} as follows:
 
 \begin{lem}\label{synth}
 	Let \(B^4\) be a simply connected 4-manifold and let \(M^5\) be the total space of a principal \(S^1\) bundle over \(B\) with first Chern class \(2d\in\zco{B}{2}\) where \(d\) is a primitive element such that \(w_2(TB)=d\) {\normalfont mod 2}.  Then \(M\) satisfies the condition of \tref{hs1} with \(b_2(M)=b_2(B)-1\) and \([P]=\beta(B,d).\)
 	\end{lem}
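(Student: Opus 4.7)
The assertions that $M$ satisfies the hypotheses of \tref{hs1} and that $b_2(M) = b_2(B) - 1$ follow immediately from \lref{buntop} applied with $k = 2$: the primitivity of $d$ forces the principal bundle to be nontrivial with $\pi_1(M) \cong \f{Z}_2$ acting trivially on $\pi_2(M)$, $H_2(M,\f{Z})$ torsion free, and $b_2(M)=b_2(B)-1$, while the condition $w_2(TB) = d \bmod 2$ is precisely what places $M$ into Type III.

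The remaining content is the equality $[P] = \beta(B, d)$, which I would handle by comparing the pin$^+$ structures underlying the two cobordism classes on a characteristic submanifold $P\subset M$. Both arise from a spin structure on $TM \oplus 2\mu$ by way of the isomorphism \eqref{pbuniso} and \cite{KT} Lemma 1.7. The class $\beta(B, d)$ is built from the particular spin structure obtained by pulling back, via $\rho$, the spin structure on $TB \oplus \lambda$---which is the defining datum of the spin$^c$ structure on $B$ with canonical class $d$---adjoining the trivial spin structure on the vertical bundle $TS^1$, and transporting the result through the isomorphism \eqref{buniso}. Exhibiting this as a valid choice of spin structure on $TM\oplus 2\mu$ in the construction of the natural pin$^+$ structure on a characteristic submanifold then places $\beta(B, d)$ in the same pin$^+$ cobordism class as $[P]$ (up to the sign ambiguity implicit in \tref{hs1}), by the cobordism argument reviewed just before \tref{hs1}.

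The bulk of the remaining work is to verify that this sequence of identifications actually produces a well-defined spin structure on $TM \oplus 2\mu$. This reduces to the already-established identification $\rho^*\lambda \cong \mu \oplus \mu$, the canonical trivialization of $TS^1$ by the $S^1$-action field, and tracking these compatibly through the structure-group reduction involved in \cite{KT} Lemma 1.7. I expect this to be the main, though essentially bookkeeping, obstacle: there are no new ingredients, only careful matching of spin and spin$^c$ data along a chain of bundle isomorphisms, together with checking that the indeterminacy in the spin structure on $TM\oplus 2\mu$ (governed by $H^1(M,\f{Z}_2)\cong\f{Z}_2$) is absorbed by the $\pm$ sign of \tref{hs1}.
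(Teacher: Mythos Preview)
Your approach is correct and matches the paper's. In the paper, \lref{synth} is presented as a synthesis (with no separate proof environment): the hypotheses of \tref{hs1} and the value of $b_2(M)$ come directly from \lref{buntop}, and the equality $[P]=\beta(B,d)$ is essentially by construction, since $\beta(B,d)$ is \emph{defined} in the preceding paragraph as the pin$^+$ cobordism class of a characteristic submanifold $P\subset M$ equipped with the pin$^+$ structure obtained from the spin structure on $TM\oplus 2\mu$ via \eqref{buniso} and \eqref{pbuniso}. Your plan spells out more explicitly than the paper does why this particular spin structure on $TM\oplus 2\mu$ is a legitimate choice for the $[P]$ appearing in \tref{hs1}, and how the two-fold ambiguity from $H^1(M,\f{Z}_2)\cong\f{Z}_2$ is absorbed by the $\pm$ sign there; this extra care is sound but not strictly needed, since the paper simply takes the constructed pin$^+$ structure as the one used in \tref{hs1}.
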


  In the next lemma, we will see that \(\beta\) is a spin\(^c\) cobordism invariant whenever it is defined.  
 \begin{lem}\label{betaprop} 	
 	Let \(B_1,B_2\) be spin\(^c\) manifolds with primitive canonical classes \(d_1,d_2\) respectively.
 	\begin{itemize}
 		\item[a)] \(\beta(B_1\amalg B_2,d_1+d_2)=\beta(B_1,d_1)+\beta(B_2,d_2)\)
 		
 		\item[b)]If \(B_1\) is spin\(^c\) cobordant to \(B_2\) then \(\beta(B_1,d_1)=\beta(B_2,d_2).\)
 		
 	\end{itemize}
 \end{lem}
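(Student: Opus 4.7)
Part (a) is immediate from the construction.  The principal \(S^1\) bundle over \(B_1\amalg B_2\) corresponding to \((d_1,d_2)\) is the disjoint union \(M_1\amalg M_2\), so a characteristic submanifold is \(P_1\amalg P_2\), and the spin structure induced on \(T(B_1\amalg B_2)\oplus(\lambda_1\amalg\lambda_2)\) is just the disjoint sum of those on \(TB_i\oplus\lambda_i\).  Therefore the pin\(^+\) structure on \(P_1\amalg P_2\) built via \eqref{pbuniso} is the disjoint sum of those on \(P_1,P_2\), giving \(\beta(B_1\amalg B_2,d_1+d_2)=\beta(B_1,d_1)+\beta(B_2,d_2)\).

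For part (b), suppose \(V^5\) is a spin\(^c\) cobordism from \(B_1\) to \(B_2\), with canonical class \(D\in H^2(V,\f{Z})\) restricting to \(d_i\), and let \(\Lambda\to V\) be a complex line bundle with \(c_1(\Lambda)=D\).  The plan is to reprise on \(V\) the construction that defines \(\beta\), producing a pin\(^+\) cobordism between \(P_1\) and \(P_2\).  I would first form the principal \(S^1\) bundle \(\rho:N^6\to V\) with first Chern class \(2D\), so that \(\partial N=M_1\amalg(-M_2)\), and then find a real line bundle \(\eta\to N\) with \(\eta|_{M_i}\cong\mu_i\).  With \(\eta\) in hand I would define \(W^5\subset N^6\) as the transverse preimage of \(\ar{}P^{N-1}\) under a classifying map for \(\eta\) whose restriction to \(\partial N\) classifies the \(\mu_i\), so that \(\partial W=P_1\amalg(-P_2)\).

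A pin\(^+\) structure on \(W\) matching the ones on the \(P_i\) is then built exactly as in the closed case.  The spin structure on \(TV\oplus\Lambda\) coming from the spin\(^c\) data pulls back and combines with the trivialization of the vertical action field to give a spin structure on \(TN\oplus\eta\oplus\eta\), as in \eqref{buniso}.  Restricting to \(W\) and using the analogue of \eqref{pbuniso}, namely \((TN\oplus 2\eta)|_W\cong TW\oplus 3\det(TW)\), yields a spin structure on \(TW\oplus 3\det(TW)\), which \cite{KT} Lemma 1.7 converts to a pin\(^+\) structure on \(W\) restricting to those on the \(P_i\).  Hence \(W\) exhibits the required pin\(^+\) cobordism and gives \(\beta(B_1,d_1)=\beta(B_2,d_2)\) in \(\Omega_4^{\text{Pin}^+}\).

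The main obstacle is producing \(\eta\) on the interior of \(N\).  The Gysin sequence for \(\rho\) shows that \(\rho^*D\) is \(2\)-torsion in \(H^2(N,\f{Z})\), which is necessary for \(\rho^*\Lambda\) to split as \(\eta\oplus\eta\), but there remains a potential obstruction in \(H^2(N,\partial N;\f{Z}_2)\) to extending the classes \(w_1(\mu_i)\in H^1(\partial N,\f{Z}_2)\) to a single class on all of \(N\).  I would try to show this obstruction is forced to vanish by the spin\(^c\) data on \(V\); if it does not vanish outright, it can be killed by spin\(^c\)-preserving surgery on loops in the interior of \(V\), which alters neither the boundary nor the spin\(^c\) bordism class and can be used to arrange that \(V\) is simply connected with \(D\) primitive.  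After such surgery \(N\) has fundamental group \(\f{Z}_2\) and \(\eta\) may be taken to be the orientation bundle of the associated double cover, which automatically restricts to \(\mu_i\) on each \(M_i\).
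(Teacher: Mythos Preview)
Your argument is correct and follows essentially the same route as the paper.  The paper's proof is simply a cleaner packaging of what you do in your final paragraph: rather than starting with an arbitrary spin\(^c\) cobordism \(V\), identifying the obstruction to extending \(w_1(\mu_i)\), and then proposing surgery to kill it, the paper begins by taking the cobordism \(W\) to be simply connected (a standard move, since surgery on circles in the interior preserves the spin\(^c\) structure).  Once \(W\) is simply connected, the restriction of the canonical class to the primitive \(d_i\) forces \(d\) itself to be primitive, and \lref{buntop} immediately gives \(\pi_1(N)=\f{Z}_2\); the required real line bundle is then just \(\tilde N\times_{\f{Z}_2}\ar{}\), and the classifying map for the universal cover of \(N\) restricts to classifying maps for the \(M_i\) because \(\pi_1(M_i)\to\pi_1(N)\) is an isomorphism.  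Everything after that---the pin\(^+\) structure on the characteristic submanifold via the analogue of \eqref{buniso} and \eqref{pbuniso}---is identical to your outline.

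One terminological slip: in your last line, \(\eta\) is not the ``orientation bundle of the associated double cover'' (indeed \(N\) is orientable), but rather the real line bundle \(\tilde N\times_{\f{Z}_2}\ar{}\) associated to the nontrivial \(\f{Z}_2\)-cover.
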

  
  \begin{proof}
  	Part \(a\) follows immediately since the total space of the relevant bundle and the characteristic submanifold of that total space will be disjoint unions.  
  	
  	To prove part \(b,\) let \(W\) be a simply connected spin\(^c\) cobordism between \(B_1\) and \(B_2\) with canonical class \(d.\)  The \(d|_{B_i}=d_i\) for each \(i=1,2,\) and \(d\) must be a primitive class.  Let \(\pi:N\to B\) be the principal \(S^1\) bundle over \(W\) with first Chern class \(2d.\)  By \lref{buntop} \(\pi_1(N)=\f{Z}_2.\) \(\partial N=\pi^{-1}(B_1)\amalg \pi^{-1}(B_2)\) and \(M_i=\pi^{-1}(B_i)\to B_i\) is the principal \(S^1\) bundle with first Chern class \(2d_i.\)  
  	
  	Let \(f:N\to \ar{}P^N\) be a classifying  map for the universal cover of \(N\) which is transverse to \(\ar{}P^{N-1}.\)  By \lref{buntop}, \(\pi_1(N)\) is generated by any \(S^1\) orbit, so  \(\pi_1(M_i)\to\pi_1(N)\) is an isomorphism, and \(f|_{M_i}\)  is a classifying map for the universal cover of \(M_i.\)  Thus \(P_i=f^{-1}(\ar{}P^{N-1})\cap M_i\) is a characteristic submanifold of \(M_i\) and \(f^{-1}(\ar{}P^{N-1})\) is a cobordism between \(P_1\) and \(P_2\).  The argument before \lref{synth} proves that the spin\(^c\) structure on \(W\) induces a pin\(^+\)  structure on \(f^{-1}(\ar{}P^{N-1})\).  That pin\(^+\) structure restricts to the pin\(^+\) structures induced on \(P_i\) by the spin\(^c\) structures on \(B_i\).  To see this one must simply note that the nontrivial real line bundle over \(N\) restricts to the nontrivial real line bundle over \(M_i.\)  We conclude that 
  	\[\beta(B_1,d_1)=[P_1]=[P_2]=\beta(B_2,d_2).\]
  \end{proof}
  
  We now see that \(\beta\) defines a map between the spin\(^c\) and pin\(^+\) cobordism groups.  The 4 dimensional spin\(^c\) cobordism group  \(\Omega_4^{\text{Spin}^c}\) is isomorphic to \(\f{Z}^2.\) The isomorphism takes a spin\(^c\) manifold \(B\) with canonical class \(d\) to the characteristic numbers  
  \[\left<d^2,[B]\right>\text{ and    }\ov{8}\left(\left<d^2,[B]\right>-\text{sign}(B)\right).\]
  Here sign\((B)\) is the signature, and the second integer is the index of the spin\(^c\) Dirac operator, which we denote by ind\((B,d).\)  See \cite{BaG},  \cite{S} for details.  To construct generators of \(\Omega_4^{\text{Spin}^c}\) let \(x\in H^*(\f{C}P^2,\f{Z})\) be the generator which is the first Chern class of the Hopf bundle.  Give \(X=\f{C}P^2\) the  spin\(^c\) structure with canonical class \(x\) and  \(Y=\f{C}P^2\#\f{C}P^2\#\overline{\f{C}P^2}\)  the spin\(^c\) structure with canonical class \(d_Y=(3x,x,x)\in H^2(Y,\f{Z})\cong \oplus^3H^2(\f{C}P^2,\f{Z}).\)  Then \([X],[Y]\in \Omega^{\text{Spin}^c}_4\) represent \((1,0)\) and \((9,1)\) under the isomorphism with \(\f{Z}^2\) and form a minimal generating set of \(\Omega^{\text{Spin}^c}_4.\)  Since \(X\) and \(Y\) have primitive canonical classes, and their inverses in the cobordism group are given by reversing orientation, we conclude that every class in \(\Omega^{\text{Spin}^c}_4\) can be represented by a simply connected manifold \(B\) with primitive canonical class \(d\).    \lref{betaprop} implies that by mapping the cobordism class of such a pair to  \(\beta(B,d)\) we can define a homomorphism \(\beta:\Omega^{\text{Spin}^c}_4\to \Omega_4^{\text{Pin}^+}.\)

  Using the isomorphism \(\Omega_4^{\text{Pin}^+}\cong\f{Z}_{16}\) generated by a  pin\(^+\) structure on \(\ar{}P^4\) we prove the following :
  
  \begin{lem}\label{betaval}
  \[\beta(B,d)=\left<d^2,[B]\right>+4\ep\ \text{\normalfont ind}(B,d)\text{\normalfont \ mod\  } 16\]
for an unkown sign \(\ep=\pm1.\) 
  \end{lem}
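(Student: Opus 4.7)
The plan is to verify the formula on a generating set of \(\Omega_4^{\text{Spin}^c}\), since both sides of the identity define group homomorphisms \(\Omega_4^{\text{Spin}^c}\to\f{Z}_{16}\). For \(\beta\) this is \lref{betaprop}, and for the right-hand side it follows since \(\left<d^2,[B]\right>\) and \(\text{ind}(B,d)\) are both spin\(^c\) cobordism invariants. Since \(\Omega_4^{\text{Spin}^c}\) is generated by \([X,x]=[\f{C}P^2,x]\) and \([Y,d_Y]\) as described before the lemma, it suffices to verify the formula on these two classes.

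For \([X,x]\), the principal \(S^1\)-bundle over \(\f{C}P^2\) with first Chern class \(2x\) has total space \(\ar{}P^5\), since its double cover is the Hopf bundle \(S^5\to\f{C}P^2\). A natural characteristic submanifold is the standard \(\ar{}P^4\subset\ar{}P^5\), lifting to an equatorial \(S^4\subset S^5\) preserved by the antipodal action. The two pin\(^+\) structures on \(\ar{}P^4\) generate \(\Omega_4^{\text{Pin}^+}\cong\f{Z}_{16}\) as \(\pm1\), so the pin\(^+\) structure induced by the spin\(^c\) structure on \(\f{C}P^2\) via \eqref{buniso} and \eqref{pbuniso} gives \(\beta(X,x)=\pm1\). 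I would fix the isomorphism \(\Omega_4^{\text{Pin}^+}\cong\f{Z}_{16}\) so that this particular pin\(^+\) structure corresponds to \(+1\); the right-hand side then evaluates to \(\left<x^2,[\f{C}P^2]\right>+4\epsilon\cdot 0=1\), matching for either choice of \(\epsilon\).

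For \([Y,d_Y]\), the right-hand side evaluates to \(9+4\epsilon\pmod{16}\), taking values in \(\{5,13\}\), so the task reduces to showing \(\beta(Y,d_Y)\in\{5,13\}\). This is the main computational obstacle. The approach is to describe a characteristic submanifold \(P^4\) of the total space \(M^5\to Y\) explicitly, for instance as the transverse preimage of \(\ar{}P^{N-1}\) under a classifying map \(M\to\ar{}P^N\), and compute \([P]\in\Omega_4^{\text{Pin}^+}\) either via the Brown invariant of the induced pin\(^+\) structure on \(P\), or, more systematically, by applying the Atiyah-Patodi-Singer index theorem to the disc bundle \(W=M\times_{S^1}D^2\) equipped with the spin\(^c\) extension constructed in \sref{mandc}. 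In the latter approach, the characteristic number \(\left<d_Y^2,[Y]\right>\) arises from the integral of \(c_1^2\) of the extended connection over \(W\), while \(\text{ind}(Y,d_Y)\) contributes the \(\hat{A}\)-genus term, recovering the formula up to the single sign \(\epsilon\) that encodes the orientation convention relating pin\(^+\) and spin\(^c\) data. The sign \(\epsilon\) is not pinned down by the argument, but it is harmless for the applications of \lref{betaval} later in the paper.
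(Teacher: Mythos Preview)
Your reduction to the generators \(X\) and \(Y\) and your treatment of \(\beta(X,x)\) match the paper's approach. The gap is the computation of \(\beta(Y,d_Y)\): you correctly identify that everything hinges on showing \(\beta(Y,d_Y)\in\{5,13\}\), but you do not actually carry this out. Your suggested APS route does not do what you claim. The disc bundle \(W=M\times_{S^1}D^2\) is six-dimensional with \(\partial W=M^5\), so the APS theorem applied there computes the \(\eta\) invariant of a spin\(^c\) Dirac operator on \(M^5\), a real number; it does not compute the pin\(^+\) cobordism class of a four-dimensional characteristic submanifold \(P^4\subset M^5\), which lives in \(\f{Z}_{16}\). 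You have not supplied any bridge between these two invariants, and the paper does not use one either.

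The paper's actual computation is considerably more hands-on. It builds an explicit classifying map \(f:M\to\ar{}P^5\) covering a degree-\(9\) map \(g:Y\to\f{C}P^2\), so that \(P=f^{-1}(\ar{}P^4)\). Rather than computing \([P]\) directly in \(\Omega_4^{\text{Pin}^+}\), it passes through the surjection \(\phi:\Omega_4^{\text{Pin}^+}\to\Omega_2^{\text{Pin}^-}\cong\f{Z}_8\) given by taking the dual of \(w_1^2\); since \(\ker\phi=\f{Z}_2\), computing \(\phi([P])=5\) pins \([P]\) down to \(\{5,13\}\). The surface \(\Sigma=f^{-1}(\ar{}P^2)\subset P\) dual to \(w_1(TP)^2\) is then identified with \(g^{-1}(\f{C}P^1)\#^9\ar{}P^2\), and the pin\(^-\) class of \(g^{-1}(\f{C}P^1)\) is computed via a separate homomorphism \(\psi:\Omega_4^{\text{Spin}^c}\to\Omega_2^{\text{Spin}}\hookrightarrow\Omega_2^{\text{Pin}^-}\). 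A substantial part of the argument is checking that the pin\(^-\) structure on \(g^{-1}(\f{C}P^1)\) arising via \(\psi\) agrees with the one induced from the pin\(^+\) structure on \(P\). None of this machinery appears in your proposal.
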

  
  This lemma corrects a mistake in the statement of Theorem 6.7 in \cite{HS}.  Our argument uses ideas from the proof in \cite{HS} as well as corrections suggested to the author by Yang Su.  
   
  \begin{proof}
  	We will see that \(\beta(X,x)=1\) and \(\beta(Y,d_Y)=5\) or 13.  The lemma then follows since \(\beta\) is a homomorphism and  \(\Omega_{4}^{\text{Spin}^c}\cong\f{Z}^2\).   
  
The principal \(S^1\) bundle \(\ar{}P^5\to \f{C}P^2\) which is a \(\f{Z}_2\) quotient of the Hopf bundle has first Chern class \(2x.\)  Since \(\ar{}P^4\) is a characteristic submanifold of \(\ar{}P^5,\) it follows that 
\[\beta(X,x)=[\ar{}P^4]=1\in\Omega_4^{\text{Pin}^+}.\]  

The second calculation is  more involved.  We use the notation \([z_0,z_1,z_2]\in\f{C}P^2\) and \([z_0,z_1,z_2]_\pm\in\ar{}P^5\) for the respective images of the point \((z_0,z_1,z_2)\in S^5\subset \f{C}^3\).    Let \(\rho:M\to Y\) be the principal \(S^1\) bundle with first Chern class \(2d_Y\in\zco{Y}{2}\) as defined above.  By \lref{buntop}, the double cover \(\tilde M\)  of \(M\) is the total space of a principal \(S^1\) bundle \(\tilde \rho:\tilde M\to Y\) with first Chern class \(d_Y.\)  Let \(g:Y\to \f{C}P^2\) be a classifying map for \(\tilde \rho\) which is transverse to \(\f{C}P^1\subset \f{C}P^2\) and has a regular value \([1,0,0]\in\f{C}P^1.\)  Then \(g^*x=d_Y\) and the pullback of \(\pi:\ar{}P^5\to \f{C}P^2\) by \(f\) has first Chern class \(2d_Y.\)  There is a map of principal \(S^1\) bundles  \(f:M\to \ar{}P^5\) 	covering \(g,\) that is, an \(S^1\) equivariant map making the following diagram commute: 
\[\begin{tikzcd}
M\arrow{r}{f}\arrow{d}{\rho}& \f{R}P^5\arrow{d}{\pi}\\
Y\arrow{r}{g}&\f{C}P^2
\end{tikzcd}\] 

Since the fundamental groups of \(M\) and \(\ar{}P^5\) are generated by \(S^1\) orbits (see \lref{buntop}), \(f_*:\pi_1(M)\to\pi_1(\ar{}P^5)\) is an isomorphism and \(f\) is a classifying map for the double cover \(\tilde M\to M.\)  Thus if we show that \(f\) is transverse to \(\ar{}P^4\subset \ar{}P^5,\) we we can conclude that \(P=f^{-1}(\ar{}P^4)\) is a characteristic submanifold of \( M.\)  Then given the correct pin\(^+\) structure on \(P,\) \(\beta(Y,d_Y)=[P]\in\Omega_4^{\text{Pin}^+}.\)

To see that \(f\) is transverse to \(\ar{}P^4=\{[z_0,z_1,r]_\pm\in\ar{}P^5|r\in\ar{}\}\) note that at points in \(\pi^{-1}(\f{C}P^2\backslash \f{C}P^1),\) \(\ar{}P^4\) is transverse to the \(S^1\) orbits, which are contained in the image of the equivariant map \(f.\)  At points in \(\pi^{-1}(\f{C}P^1)\), we  associate the horizontal space of the \(S^1\) action with \(T\f{C}P^2\).  By assumption on \(g\), \(f\) is transverse to \(T\f{C}P^1,\) and \(T\f{C}P^1\subset T\f{R}P^4.\)  

For later, we also note that \(f\) is transverse to \(\f{R}P^2=\{[z_0,r,0]\in\f{R}P^5|r\in\ar{}\}\) since \(T\f{C}P^1\subset T\ar{}P^2\) except at \([1,0,0]\) which is a regular value of \(f\) by assumption on \(g.\)

  There is a short exact sequence 
\begin{equation}\label{phi}0\to\f{Z}_2\to\Omega^{\text{Pin}^+}_4\xrightarrow{\phi}\Omega^{\text{Pin}^-}_2\to0\end{equation}
where \(\phi\) is given by taking the cobordism class of a submanifold dual to \(w_1^2\); see \cite{HS} p.172 and \cite{KT} p.217 for details.  Thus \(\Omega_2^{\text{Pin}_-}\) is isomorphic to \(\f{Z}_8\) with generator \([\ar{}P^2].\) We  now compute \(\phi([P])=5\), which restricts the possible values of \(\beta(Y,d_Y)=5\) or \(13\) as desired.

We need to find a submanifold of \(P\) dual to \(w_1^2(TP).\)  	Denote by \(N\ar{}P^4\) the normal bundle of \(\ar{}P^4\) in \(\rp{5}\)  and by \(NP\) the normal bundle of \(P\) in \(M\).  Then \(f^*N\rp{4}=NP.\)  Since \(\rp{5}\) and \(M\) are orientable,
\[w_1(TP)=w_1(NP)=f^*w_1(N\rp{4})=f^*w_1(T\rp{4}).\]

Since \(w_1(T\rp{4})^2\) is dual to \(\ar{}P^2\subset\ar{}P^4,\) as long as the mod 2 degree of \(f:f^{-1}(\ar{}P^2)\to\ar{}P^2\) is 1, it follows that \(f^{-1}(\ar{}P^2)\) is dual to \(w_1(TP)^2.\)  For convenience let \(\Sigma=f^{-1}(\ar{}P^2)\).  Since \([1,0,0]\) is a regular point of \(g,\) \([1,0,0]_\pm\) is a regular point of \(f,\) and the degree of \(f\) is the same as the degree of \(f|_{\Sigma}.\)  The degree of \(f\) is the same as the degree of \(g\). The degree of \(g\) is given by  
\[\left<g^*x^2,Y\right>=\left<d_Y^2,[Y]\right>=9.\]  Thus the mod 2 degree of \(f|_{\Sigma}\) is 1 and 
\(\phi([P])=[\Sigma]\in\Omega_2^{\text{Pin}^-}.\) 

Let \(U\) be a tubular neighborhood of the \(S^1\) orbit  of \([1,0,0]_\pm\) and \(V=\ar{}P^2\backslash U.\)    Since \([1,0,0]\) is a regular value of \(g\) we can choose \(U\) to be made up of regular values of \(f.\)  Then \(f|_{f^{-1}(U)}\) is a covering map.  Since \(f\) maps \(S^1\) fibers to \(S^1\) fibers, \(f_*:\pi_1(f^{-1}(U))\to\pi_1(U)\) is surjective and the covering is trivial.  Thus \(f^{-1}(U)\) is the disjoint union of deg\((f)=9\) copies of \(U\) and \(f^{-1}(U\cap \ar{}P^2)\) is 9 copies of \(U\cap\ar{}P^2.\)  The \(S^1\) orbit of \([1,0,0]_\pm\) is a nontrivial loop in \(\ar{}P^2,\) and \(U\cap\ar{}P^2\) is a tubular neighborhood of that loop, diffeomorphic to \(\ar{}P^2\backslash D^2\) (the Mobius band).  The local inverses to \(f|_{f^{-1}(U)}\) are equivariant embeddings  of the oriented tubular neighborhood \(U\) and are all isotopic.  It follows that the 9 embedding of \(\ar{}P^2\backslash D^2\) making up \(f^{-1}(U\cap\ar{}P^2)\) are all isotopic.  Thus the process by which \(TM\) induces a pin\(^+\) structure on \(P,\) which in turn induces a pin\(^-\) structure on \(\Sigma,\) will induce the same pin\(^-\) structure on each of the 9 copies of \(\ar{}P^2\backslash D^2.\)  

 Since \(\pi(\ar{}P^2)=\f{C}P^1\) and \(\pi(U)\cap\f{C}P^1\) is diffeomorphic to a disc \(D^2\) around \([1,0,0]\) made up of regular values of \(g\),  \(g^{-1}(\pi(U)\cap\f{C}P^1)\) is 9 copies of \(D^2\)  and \(\pi(V)=\f{C}P^1\backslash D^2.\) \(\pi|_{\f{R}P^2}\) is injective away from the orbit of \([1,0,0]_\pm,\) and thus is injective on \(V.\)    It follows that \(\rho\) maps \(f^{-1}(V)\) injectively onto \(g^{-1}(\pi(V)).\)  Thus \(f^{-1}(V)\) is diffeomorphic to \(g^{-1}(\f{C}P^2)\) with 9 discs removed while \(f^{-1}(U\cap\ar{}P^2)\) is 9 copies of \(\ar{}P^2\backslash D^2.\)  In other words,  
\begin{equation}\label{consum}
\Sigma\cong g^{-1}(\f{C}P^1)\#\f{R}P^2\#...\#\ar{}P^2\end{equation}
and the nine summands of \(\ar{}P^2\) all have the same pin\(^-\) structure.  \(\Omega_2^{\text{Pin}^-}\) is generated by \([\f{R}P^2],\) and so it remains to compute the value of \([g^{-1}(\f{C}P^1)]\).  

Let \(\chi=g^{-1}(\f{C}P^2)\).  We will use a general method to define a pin\(^-\) structure called \(r_\chi\) on \(\chi\) and compute \([\chi]\in\Omega_2^{\text{Pin}^-}\) with this structure.  We will then show that \(r_\chi\) is the correct pin\(^-\) structure to use, that is, \(r_\chi\) is compatible under \eqref{consum} with the pin\(^-\) structure used to identify \([\Sigma]\)  with \(\phi([P]),\) which we will call \(r.\)

Consider a simply connected spin\(^c\) 4-manifold \(B\) with canonical class \(d\) and \(\nu\) the complex line bundle with \(c_1(\nu)=d\).  Let \(N\subset B\)  be a smooth submanifold dual to \(d.\)  Then \(\nu|_N\) is isomorphic to the normal bundle of \(N\).  The spin\(^c\) structure on \(B\) is equivalent to a spin structure, called \(s\), on \(TB\oplus \nu.\)  Restricted to \(N,\)  this is a spin structure on 
\(TN\oplus2\nu.\)  The transition functions for \(2\nu\) admit a canonical lift from \(\SO(4)\) to \Spin(4); simply multiply two copies of any lift for the transition functions of \(\nu,\) and the sign ambiguities cancel.  Note that the identity lifts to the identity in this way.  Using this lift, \(s\) induces a spin structure \(s_N\) on \(N.\)         

The spin cobordism class of \(N\) depends only on the spin\(^c\) cobordism class of \(B.\)  To see this, note that the dual to the canonical class of a spin\(^c\) cobordism will be a spin cobordism between the two relevant  submanifolds.  Thus we have a homomorphism 
\[\psi:\Omega_4^{\text{Spin}^c}\to\Omega_2^{\Spin}\cong\f{Z}_2.\]
defined by \(\psi([B])=[N].\)  Indeed, there is a long exact sequence 
\[\to\Omega_4^{\text{Spin}}\to\Omega_4^{\text{Spin}^c}\to\Omega_2^{\text{Spin}}(BU(1))\to\Omega_3^{\Spin}=0\] 
 as in \cite{HS} p.154 and \cite{HKT} p.654. We see that \(\psi\) is surjective by noting that \(\psi\) is the composition of \(\Omega_4^{\text{Spin}^c}\to\Omega_2^{\text{Spin}}(BU(1))\) with the surjective map \(\Omega_2^{\text{Spin}}(BU(1))\to \Omega_2^{\Spin}\) which ignores the map to \(BU(1)\) .  
\

Recall that \(X,Y\) generate \(\Omega^{\text{Spin}^c}_4.\)  The canonical class of \(X\) is dual to \(\f{C}P^1\subset\f{C}P^2\), which is nullcobordant, so \(\psi([X])=0.\)  Since \(\psi\) is surjective, \(\psi([Y])\) generates \(\Omega_2^{\text{Spin}}\).  Since \(\f{C}P^1\) contains a regular value of \(g\), the degree of \(g|_{\chi}\) equals the degree of \(g\) and \(\chi\) is dual to \(g^*x=d_Y\).  Giving \(\chi\) the spin structure \(s_\chi\) used to define \(\psi\),  \(\psi([Y])=[\chi]\neq0.\)
	
	\Spin(n) embeds naturally into both \(\Pin^\pm(n)\), so a spin\  structure induces a natural pin\(^-\) structure.  Kirby and Taylor show that in dimension 2, the corresponding map \[\Omega_2^{\Spin}\cong\f{Z}_2\to\Omega_2^{\Pin^-}\cong\f{Z}_8\]   
is injective, see proposition 3.8 in \cite{KT}.  Let \(r_\chi\) be the Pin\(^-\) structure on \(\chi\) induced by \(s_\chi.\)  Using that structure \([\chi]=4\in\Omega_2^{\text{Pin}^-}.\)  Once we confirm that \(r_\chi\) is the correct structure, we conclude with \eqref{consum} that  \(\phi([P])=5,\) completing the proof of \lref{betaval}.   


Let \(r\) be the pin\(^-\) structure on \(\Sigma\) used to define \(\phi([P]).\)  Recall that \(\rho\) is a diffeomorphism between the open set \(O=f^{-1}(V)\subset \Sigma\) and \(\rho(O),\) which is \(\chi\) with 9 discs removed.  It remains only to check that \(r=\rho^*r_\chi\) on \(O.\)

We first recall the definition   of \(r.\)  Let \(\mu\) be the nontrivial real line bundle over \(M\) and let \(E=TM\oplus 2\mu.\)  Let \(\lambda\) be the complex line bundle over \(Y\) with \(c_1(\lambda)=d_Y\) and let \(s\) be spin structure on \(TY\oplus \lambda\) used in the definition of \(\psi\). With the isomorphism \eqref{buniso},  \(s\) induces a spin structure on \(E\) called \(s_E.\)  Then \eqref{pbuniso} shows
\[E|_P=TP\oplus3\text{det}(TP)\] and  we induce a pin\(^+\) structure on \(TP\) using a canonical lift of the transition functions of \(3\text{det}(TP)\) from \(\O(3)\) to Pin\(^-(3)\).  In turn,
\[TP|_\Sigma=T\Sigma\oplus2\text{det}(T\Sigma)\]
and using a canonical lift of the transition functions of \(2\text{det}(T\Sigma)\) from \(\O(2)\) to \(\Pin^+(2)\) we induce the pin\(^-\) structure \(r\) on \(\Sigma.\)  Note that normal bundle of \(\Sigma\) in \(P\) is orientable and thus 
\[w_1(\text{det}(T\Sigma))=w_1(\text{det}(TP)|_\Sigma)\]
In this way we can combine the two steps and see that \(s_E\) induces \(r\) on \(T\Sigma\) using the isomorphism 
\begin{equation}\label{detiso}E|_\Sigma=T\Sigma\oplus5\text{det}(T\Sigma)\end{equation}
and a canonical lift of the transition functions of 5det\((T\Sigma)\) from \(\O(5)\) to Pin\(^+(5).\)  The details of the canonical lifts involved can be found in \cite{KT} Lemma 1.7; the salient fact is that each lifts the identity to the identity.  

Next, we note that det\((T\Sigma)\) and \(\rho^*\lambda\) are trivial over \(O.\)  The former follows because because \(O\) is an open set in \(\Sigma\), but is orientable since it is diffeomorphic to an open set in \(\chi\).  As for the latter, we have seen that \(\rho^*\lambda\cong 2\mu\), \(\mu|_P=\text{det}(TP),\) and \(\text{det}(TP)|_\Sigma=\text{det}(T\Sigma).\) Since  \(\rho\) is a diffeomorphism on \(O\) and \(\rho^*\lambda\) is trivial, \(\lambda\) is trivial on \(\rho(O).\)

Let \(t_{ij}\) be transition functions with values in \SO(2) for \(T\chi.\)  As we saw in the definition of \(\psi\), for points in \(\chi,\)
\[TY\oplus\lambda\cong T\chi\oplus2\lambda.\]    Thus on \(\rho(O)\) the transition functions for \(\lambda\) can be chosen to be the identity and the transition functions for \((TY\oplus\lambda)|_{\chi}\) can be chosen to be \(t_{ij}.\)  The spin structure \(s\) gives a lift of \(t_{ij}\) to \(\tilde t_{ij}\) in \(\Spin(2).\) Since the canonical lift of the transition functions for \(2\lambda\) will also be the identity, \(\tilde t_{ij}\) is also the lift given by \(s_\chi\) and \(r_\chi.\)

Furthermore, using \eqref{buniso},  \(t_{ij}\circ \rho\) are transition functions for \(E\) on \(O\).  By definition, \(s_E\) gives the lift \(\tilde t_{ij}\circ\rho.\)  Using \eqref{detiso}, \(t_{ij}\circ\rho\) are transition functions for both \(E|_O\) and \(T\Sigma,\) compatible by picking trivial transition functions for \(5\text{det}(T\Sigma)\).  The canonical lift of the transition functions for \(5\text{det}(T\Sigma)\) will also be trivial, and the lift given by \(r\) will simply be the inclusion of \(\tilde t_{ij}\circ \rho\) into Pin\(^-(2).\) Thus \(r=\rho^*r_\chi\) on \(O.\)  This completes the proof of \lref{betaval}.

   \end{proof}
  
  We can now prove \tref{topthm}.  In fact, we prove the following more detailed theorem which includes the statement of \tref{topthm}.  Here we use the notation of Hambleton and Su, where \(\#_{S^1}\) is gluing along the boundary of a tubular neighborhood of a generator of \(\pi_1.\) \(X(q), q=1,3,5,7\) are the 4 closed manifolds homotopy equivalent to \(\ar{}P^5,\) with \(X(1)=\ar{}P^5\), and \(X(q), q=0,2,4,6,8\) are constructed from pairs of homotopy \(\ar{}P^5\)'s using the operation \(\#_{S^1}.\)  The labeling is such that a characteristic submanifold \(P\subset X(q)\) has class \(q\in\Omega_4^{\text{Pin}^+}/\pm=\{0,...,8\}\). See the discussion before Theorem 3.7 in \cite{HS} for details.  
  
  \begin{thm}\label{bases}
  
  			Let \(M\) be a 5-manifold with \(\pi_1=\f{Z}_2.\) Let \(P\subset M\) be a  characteristic submanifold.
  			\begin{itemize}
  			\item[1)]  \(M\) admits a free \(S^1\) action with a simply connected quotient if and only if \(M\) is orientable, \(H_2(M,\f{Z})\) is torsion free, and \(\pi_1(M)\) acts trivially on \(\pi_2(M).\) Furthermore if \(b_2(M)=0\) then  \(M\) is diffeomorphic to \(\ar{}P^5\).
  			\item[2)]Suppose \(M^5\) satisfies the conditions in 1).  Let \(Q(M)\) be the set of quotients of \(M\) by free \(S^1\) actions.  The following table gives necessary and sufficient conditions for a 4-manifold to be in \(Q(M)\). \(S\) is a set of four exceptional Type I 5-manifolds described in the final two rows.  If \(b_2(M)>0\) then for each \(B\in Q(M)\), \(M\) admits infinitely many inequivalent \(S^1\) actions with quotients diffeomorphic to \(B\).  
  			
  			\end{itemize}
  
\end{thm}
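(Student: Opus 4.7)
The proof breaks into three parts, mirroring the structure of the statement. For the necessity direction in part 1, assume $M$ admits a free $S^1$ action with simply connected quotient $B$. Writing the first Chern class of the resulting principal bundle as $kd$ with $d \in \zco{B}{2}$ primitive, \lref{buntop} forces $k = 2$ from $\pi_1(M) = \f{Z}_2$ and simultaneously delivers orientability, torsion-freeness of $H_2(M,\f{Z})$, and triviality of the $\pi_1$-action on $\pi_2(M)$.

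For sufficiency, I proceed via the Hambleton-Su classification. Given $M$ with $\pi_1 = \f{Z}_2$ satisfying the three conditions, I would first determine its Hambleton-Su type (I, II, or III) from $w_2(TM)$ and $w_2(T\tilde M)$, then produce a simply connected 4-manifold $B$ with the appropriate structure (spin$^c$ with primitive canonical class $d$ in the Type III case) such that the $S^1$ bundle over $B$ with first Chern class $2d$ has matching invariants. By \lref{synth}, taking $b_2(B) = b_2(M) + 1$ accounts for the second Betti number; in Type III the characteristic submanifold class is $\beta(B,d) \in \Omega_4^{\Pin^+} \cong \f{Z}_{16}$, and $\beta$ is surjective because $\beta(\f{C}P^2, x) = 1$ by \lref{betaval}. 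Hence every prescribed value of $[P]$ is realized and \tref{hs1} completes the identification. Types I and II are handled analogously using Theorems 6.5 and 6.8 of \cite{HS}. When $b_2(M) = 0$ we must have $b_2(B) = 1$, forcing $B \in \{\f{C}P^2, \overline{\f{C}P^2}\}$ with $d = \pm x$, and the resulting bundle is the Hopf quotient $\ar{}P^5$.

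To obtain the table in part 2 describing $Q(M)$, I combine \lref{buntop}, \lref{synth}, and \lref{betaval}. Together they show that the diffeomorphism class of the total space over $B$ with first Chern class $2d$ is determined by $b_2(B)$, $\langle d^2, [B]\rangle$, and $\ind(B,d) = \ov{8}\bigl(\langle d^2, [B]\rangle - \sign(B)\bigr)$, up to the undetermined sign $\ep = \pm 1$ in \lref{betaval}. A 4-manifold $B$ lies in $Q(M)$ precisely when there exists a primitive class $d \in \zco{B}{2}$ with $w_2(TB) \equiv d \mod 2$ producing the prescribed invariants of $M$, a purely cohomological criterion on $B$ that supplies the rows of the table. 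The sign ambiguity in \lref{betaval} explains the two-fold indeterminacy that forces the inclusion of the exceptional set $S$ of four Type I manifolds in the final two rows.

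Finally, for the infinitely-many-actions claim when $b_2(M) > 0$, fix $B \in Q(M)$ and consider primitive classes $d \in \zco{B}{2}$ with $w_2(TB) \equiv d \mod 2$ as $\langle d^2, [B]\rangle$ ranges over an infinite set of integers. Since $\beta(B,d)$ takes only $16$ values, pigeonhole produces infinitely many such $d$ with the same $\beta$, and hence by \tref{hs1} diffeomorphic total spaces. Any two such bundles lie in distinct $\mathrm{Diff}(B)$-orbits on $\zco{B}{2}$ because $\langle d^2, [B]\rangle$ is a diffeomorphism invariant, so the corresponding $S^1$ actions are inequivalent. The main obstacle I anticipate is the precise enumeration of the table in part 2: this requires carrying the sign $\ep$ of \lref{betaval} consistently through every row and translating the Type I/II invariants of Hambleton-Su (which lack a clean $\beta$-style interpretation) into purely cohomological conditions on $B$.
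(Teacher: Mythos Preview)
Your outline captures the broad shape of the argument, but there are two genuine gaps.

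First, the pigeonhole step for infinitely many actions does not do what you need. Fixing \(B\in Q(M)\) and letting \(d\) vary, pigeonhole on the sixteen values of \(\beta\) gives infinitely many \(d\) with a common value of \(\beta(B,d)\), hence mutually diffeomorphic total spaces; but nothing forces that common value to be \(\pm[P]\), so those total spaces need not be \(M\). The paper does not use pigeonhole: it writes down, in each type, an explicit one-parameter family \(d_k\) (for instance \(d_k=(1+2k,1,\dots,1)\) in Type III) and checks via \lref{betaval} or \cite{HS} Theorem 6.8 that \(\beta(B,d_k)\) (resp.\ \(\langle d_k^2,[B]\rangle\bmod 8\)) is constant and equal to the invariant of \(M\). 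You would need to produce such a family, or at least argue that every value of the relevant invariant that is achieved once is achieved infinitely often.

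Second, your explanation of the exceptional set \(S\) is off. The four manifolds in \(S\) are Type I, so \lref{betaval} and its sign \(\epsilon\) play no role there; the Type I invariant lives in \(\Omega_4^{\Pin^c}\cong\Z_8\oplus\Z_2\) and is governed by \(\langle d^2,[B]\rangle\bmod 8\) via \cite{HS} Theorem 6.8. The exceptional behaviour is a small-\(b_2\) number-theoretic obstruction: when \(b_2(B)=3\) or \(4\) and \(|\mathrm{sign}(B)|=b_2(B)\), the simultaneous constraints that \(d\) be primitive and that \(w_2(TB)\neq d\bmod 2\) force \(\langle d^2,[B]\rangle\not\equiv 0,4\bmod 8\), so the values \(q=0,4\) are unreachable over such \(B\). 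This is a direct arithmetic check the paper carries out, and it is unrelated to the two-fold ambiguity in \(\epsilon\). More generally, ``Types I and II are handled analogously'' hides real work: Type I requires separate invariants and the explicit case analysis just described, and you also need the observation (proved at the start of the paper's argument via the Gysin sequence) that every free \(S^1\) quotient of such an \(M\) is automatically simply connected.
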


  		  \begin{tabular}{|l|l|}
  			\hline
  			&\\
  			
  			\(M^5\)&\(Q(M^5)\)= simply connected 4-manifolds \(B^4\) such that\\\hline Type II&\(B\) is spin and \(\ b_2(B)=b_2(M)+1\)\\Type III& \(B\) is non-spin, \(\ b_2(B)=b_2(M)+1 \text{ and sign}(B)=\pm[P]\text{ mod }4\)\\Type I and \(M\notin S\)&\(B\) is non-spin and  \(b_2=b_2(M)+1\)\\\(X(q)\#_{S^1}(\f{C}P^2\times S^1),\ q=0,4\)&\(B\) is non-spin, \(b_2=3\) and \( |\text{sign}(B)|=1\)\\\(X(q)\#_{S^1}(S^2\times\ar{}P^3)\ q=0,4\)&\(B\) is non-spin,  \(b_2=4\) and \( |\text{sign}(B)|<4\)\\
  			\hline
  		\end{tabular}

  
 \

\bigskip

Thus given \(M^5\) satisfying the hypotheses of 1) and matching the description of one of the rows in the left column, a 4-manifold \(B^4\) is diffeomorphic to a quotient of \(M^5\) by a free \(S^1\) action if and only if it satisfies the conditions given in the corresponding row of the right column.

  \begin{proof}
   We prove 2) first.  Let \(M\) be an orientable 5-manifold with \(\pi_1(M)=\f{Z}_2\) acting trivially on \(\pi_2(M)\), \(H_2(M,\f{Z})\) torsion free, and \(b_2(M)>0\) unless \(M\cong\ar{}P^2\).  Let \(P\subset M\) be a characteristic submanifold.  
   
   Note that if \(M\to B\) is principal \(S^1\) bundle, the long exact homotopy sequence implies that \(\pi_1(M)\to\pi_1(B)\) is surjective.  If \(\pi_1(B)=\f{Z}_2,\) then the Gysin sequence implies that \(H^3(B)\to H^3(M)\) is injective.  Since \(M,\) and thus \(B,\) is orientable, \(H^3(B)=\f{Z}_2\) and \(H_2(M)\)
would not be torsion free.  Thus any quotient of \(M\) by a free \(S^1\) action is simply connected.  
   
  \subsection*{M is Type II}
  
  First, suppose \(M\to B\) is a principal \(S^1\) bundle. By \lref{buntop}, \(b_2(B)=b_2(M)+1\) and by \cite{HS} Proposition 6.1 \(B\) is spin.
    
   Conversely, Let \(B\) be a  simply connected spin 4-manifold with \(b_2(B)=b_2(M)+1.\)   Then by \cite{HS} Proposition 6.1, all of the total spaces of principal \(S^1\) bundle over \(B\) with \(\pi_1=\f{Z}_2\) are Type II and have second Betti number \(b_2(B)-1\). By \cite{HS} Theorem 3.1 all such total spaces are diffeomorphic to \(M.\)  If \(b_2(M)\geq1\) there are infinitely many primitive elements of \(H^2(B,\f{Z})=\f{Z}^{b_2(M)+1}\) and thus infinitely many non-isomorphic such bundles.  
     
    \subsection*{M is Type III}
    
     Suppose \(M\to B\) is a principal \(S^1\) bundle. By \lref{buntop}, \(b_1(B)=b_1(M)+1\) and the first Chern class of the bundle is \(2d,\) where \(d\) is a primitive element of \(H^2(B,\f{Z})\) such that \(w_2(TB)=d\) mod 2.  It follows that \(B\) is non-spin, and by \cite{LM} Corollary II.2.12, p. 89 the intersection form of \(B\) is odd.  By the classification of integral forms and Donaldson's Theorem, \cite{DK} p.5 and Theorem 1.3.1, p.25, the intersection form of \(B\) is diagonal, and so  \(H^*(B,\f{Z})=\zco{\#^a\f{C}P^2\#^b\overline{\f{C}P^2}}{*}\)
  for some integers \(a,b.\)  Then using \cite{LM} Corollary II.2.12 again we see that 
  \[w_2(B)=(1,1,...,1)\in\twoco{B}{2}\cong\f{Z}_2^{a+b}.\]
  Thus \(d=(d_1,...,d_{a+b})\in\zco{B}{2}\cong\f{Z}^{a+b}\) where each \(d_i\) is an odd integer.  This completes the proof of one direction of  2) since
  \[[P]=\beta(B,d)=\left<d^2,[B]\right>=\sum_{i=1}^ad_i^2-\sum_{j=a+1}^bd_j^2=\text{sign}(B) \quad\text{ mod } 4.\]
  
Conversely, Let \(B\) be a non-spin simply connected 4-manifold with \(b_2(B)=b_2(M)+1.\)  Assume further that sign\((B)=[P]\in\f{Z}_4/\pm.\) Again, \(H^*(B,\f{Z})=\zco{\#^a\f{C}P^2\#^b\overline{\f{C}P^2}}{*}\) where \(b_2(B)=a+b\) and sign\((B)=a-b.\)  Choose \(c\in\{0,1,2,3\}\) such that \(\pm[P]=a-b+4c\text{ mod 16}\).  If \(b_2(M)>0\), choose \(k\) such that \[(4+2\ep)k(k+1)=4c\text{ mod }16\]
  where \(\ep=\pm1\) is the sign from \lref{betaval}.  If \(b_2(M)=0\) then choose \(k=0\).  Set
  \[d_k=(1+2k,1,...,1)\in\zco{B}{2}\cong\f{Z}^{a+b}.\] Then \(d\) is primitive and as above, we see that \(w_2(TB)=d\) mod 2.   Using \lref{betaval} we have

  \[\beta(B,d_k)=\text{sign}(B)+(4+2\ep)k(k+1)=\pm[P] \text{ mod }16\]
  Hence by \lref{synth} and \tref{hs1}  \(M\) is diffeomorphic to the total space of an \(S^1\) bundle over \(B\) with first Chern class \(2d_k.\)  In the case where \(b_2(M)>1,\) there are infinitely many choices of \(k\) yielding distinct classes \(d_k,\) and \(M\) is  diffeomorphic to infinitely many total spaces of non-isomorphic \(S^1\) bundles over \(B.\)
  
  \subsection*{M is Type I}

  Suppose \(M\to B\) is a principal \(S^1\) bundle.  By \lref{buntop}, \(b_1(B)=b_1(M)+1\) and by \cite{HS} proposition 6.1 \(B\) is non-spin and and the first Chern class of the bundle is \(2d,\) where \(d\) is a primitive element of \(H^2(B,\f{Z})\) such that \(w_2(TB)\neq d\) mod 2.  
  
  If \(M=X(q)\#_{S^1}(\f{C}P^2\times S^1),\ q=0,4\) then \(b_2(B)=3\) and by \cite{HS} Theorem 6.8 \(\left<d^2,[B]\right>=\pm q\)   mod 8.  If sign\((B)=\pm3,\) then up to orientation as above \(\zco{B}{*}=\zco{\#^3\f{C}P^2}{*}\) and  \(w_2(TB)=(1,1,1).\)  Thus
  \[d=(d_1,d_2,d_3)\in\zco{B}{2}\cong\f{Z}^3.\]
  and some \(d_i\) must be even.  Since \(d\) is primitive, some \(d_i\) must be odd.  One easily checks that under these conditions, \(\left<d^2,[B]\right>\neq0,4\) mod 8.  So sign\((B)=\pm1.\)
  
  If \(M=X(q)\#_{S^1}(S^2\times \ar{}P^3),\ q=0,4\) then \(b_2(B)=4\) and  \(\left<d^2,[B]\right>=\pm q\)   mod 8. If sign\((B)=\pm4,\) then up to orientation by the argument in the Type III case, \(\zco{B}{*}=\zco{\#^4\f{C}P^2}{*}\) and  
  \[d=(d_1,d_2,d_3,d_4)\in\zco{B}{2}\cong\f{Z}^3\]
  with at least one \(d_i\) even and at least one \(d_i\) odd.  Again \(\left<d^2,[B]\right>\neq0,4\) mod 8, so \(|\)sign\((B)|<4.\)

  Conversely, Let \(B\) be a  simply connected non-spin 4-manifold satisfying the conditions given by the table for \( Q(M)\).  Then \(H^*(B,\f{Z})=\zco{\#^a\f{C}P^2\#^b\overline{\f{C}P^2}}{*}\) for some integers \(a,b\) such that \(a+b=b_1(M)+1.\)    Let \((q,s) \in\f{Z}_8\oplus\f{Z}_2\) represent the cobordism class of \(P\subset M\) in the pin\(^c\) cobordism group \(\Omega_4^{\text{Pin}^c}\cong\f{Z}_8\oplus\f{Z}_2\); see \cite{HS} p.154.  By \cite{HS} Theorem 3.6 \(q+s=b_2(M)+1\) mod 2.  
  
  If \(q=0,4\) then  \cite{HS} Theorem 3.7  implies that \(a+b\geq3\), so we can assume that up to orientation \(a\geq2\) and using the table either \(a+b\geq5\) or \(|\)sign\((B)|<b_2(B)\),  which implies \(b>0.\)   Define the following elements \(d_k\in\zco{B}{2}\cong\f{Z}^a\oplus\f{Z}^b\) for each \(k\in\f{Z}.\)
  
  \[\begin{array}{ccc}q=0:&d_k=(1+8k,0,...,0,1)\text{ if \(b>0\)}&d_k=(2+8k,1,1,1,1,0,...,0)\text{ if \(b=0\)}\\q=4:&d_k=(2+8k,1,0,...,0,1)\text{ if \(b>0\)}&d_k=(1+8k,1,1,1,0,...,0)\text{if \(b=0\)}\end{array}.\]
  
  \
  
  If \(q=2\), \cite{HS} Theorem 3.7 implies that \(a+b\geq 3\) and we can assume \(a\geq2\) and define 
  \[ \begin{array}{ccc}q=2:&d_k=(1+8k,1,0,...,0)&\end{array}\]
  
  If \(q\) is odd, By \cite{HS} Theorem 3.7 \(a+b\geq2,\)  and we can assume \(a\geq1\). Define
  
  \[ \begin{array}{ccc}q=1:&d_k=(1+8k,4,0,...,0)&\\q=3:&d_k=(1+8k,2,0,...,0).&\end{array}\]
In each case \(d_k\) is primitive, \(w_2(TB)\neq d_k\) mod 2, and \(q=\pm\left<d_k^2,[B]\right>\) mod 8.  By   \cite{HS} Theorem 6.8 the \(S^1\) bundle over \(B\) with first Chern class \(2d_k\) is diffeomorphic to \(M\).  Again  infinitely many \(k\) yield distinct classes \(d_k\) an thus non-isomorphic bundles.  

  \bigskip

  To prove 1), first assume \(M\) is a 5-manifold with \(\pi_1(M)=\f{Z}_2\)  admitting a free \(S^1\) action with simply connected quotient \(B\).  By \lref{buntop}, \(M\) is orientable, \(\pi_1(M)\) acts trivially on \(\pi_2(M)\) and \(H_2(M,\f{Z})\) is torsion free.  If \(b_2(M)=0\), then \(b_2(B)=1\) and up to orientation \(\zco{B}{*}\cong\zco{\f{C}P^2}{*}\) and \(w_2(TB)\) is non-zero.    There are only two primitive classes  \(\pm d\in H^2(B,\f{Z})\cong\f{Z}\), each restricting to \(w_2(B)\) mod 2.  Thus \(B\) is of Type III and \(\beta([B,d])=\pm1.\)  By \tref{hs1} \(M\) is diffeomorphic to \(\ar{}P^5.\)  
  
      To prove the converse, suppose  \(M\) is an orientable 5-manifold with  \(\pi_1(M)=\f{Z}_2\) acting trivially on \(\pi_2(M)\) and \(H_2(M,\f{Z})\) torsion free.  Let \(P\subset M\) be a characteristic submanifold.  Since \(\ar{}P^5\) admits a free \(S^1\) action induced by the Hopf action we assume \(b_2(M)>0\).   We must  show the set \(Q(M)\) described  in the table in 2) is nonempty. 
      
      If \(M\) is Type II, by \cite{HS} Theorem 3.6 \(b_2(M)\) is odd.  Then  \(B=\#^{(b_2(M)+1)/2}S^2\times S^2\in Q(M).\)       If \(M\) is Type I  then \(B=\#^{b_2(M)}\f{C}P^2\#\overline{\f{C}P^2}\in Q(M)\).  If \(M\) is Type III, let \(0\leq c<16\) be an integer such that \([P]=c\ \text{mod }16.\)  By \cite{HS} Theorem 3.6 we see that \(c=b_2(M)+1\text{ mod }2\).  Choose \(l\) such that 
  \[0\leq c-4l<4.\] Then
  \[a=\frac{b_2(M)+1+c-4l}{2}\quad\text{and}\quad b=\frac{b_2(M)+1-c+4l}{2}\]
  are nonnegative integers.  Let \(B=\#^a\f{C}P^2\#^b\overline{\f{C}P^2}.\)  Then \(b_2(B)=b_2(M)+1\) and sign\((B)=[P]\in\f{Z}_4/\pm.\)  So \(B\in Q(M)\).  \end{proof}

  \
  
  \

  We note that the final paragraph of the proof above in fact shows the following, which we will make use of later.  
  
  \begin{cor}\label{goodbases}
  	Let \(M\) be a 5-manifold with \(\pi_1=\f{Z}_2\) admitting a free \(S^1\) action with a simply connected quotient.  Then \(M\) admits a free \(S^1\) action with quotient diffeomorphic to either \(\#^cS^2\times S^2\) or \(\#^a\f{C}P^2\#^b\overline{\f{C}P^2}\) for some \(a,b,c\in\f{Z}.\)   
  \end{cor}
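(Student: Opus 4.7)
The plan is to read the corollary directly off the case analysis in the final paragraph of the proof of \tref{bases}: in each of the three types (I, II, III), the specific $B \in Q(M)$ constructed there is already of one of the two required forms $\#^c S^2 \times S^2$ or $\#^a \f{C}P^2 \#^b \overline{\f{C}P^2}$. My job is just to track which form occurs in which case and to verify the relevant numerical constraints from the table.

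For Type II, I would invoke \cite{HS} Theorem 3.6 to conclude that $b_2(M)$ is odd, so $c := (b_2(M)+1)/2$ is a nonnegative integer and $B = \#^c S^2 \times S^2$ is simply connected, spin, and has $b_2(B) = b_2(M)+1$; by the Type II row of the table this gives $B \in Q(M)$. For Type I with $M \notin S$, I would take $B = \#^{b_2(M)} \f{C}P^2 \# \overline{\f{C}P^2}$, which is non-spin with the right Betti number; for the four exceptional manifolds in $S$, I would simply verify by inspection that $\#^2 \f{C}P^2 \# \overline{\f{C}P^2}$ satisfies the $b_2 = 3$, $|\text{sign}| = 1$ constraint for the $X(q)\#_{S^1}(\f{C}P^2 \times S^1)$ row, and that $\#^2 \f{C}P^2 \#^2 \overline{\f{C}P^2}$ satisfies the $b_2 = 4$, $|\text{sign}| < 4$ constraint for the $X(q)\#_{S^1}(S^2 \times \f{R}P^3)$ row.

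For Type III the task is to produce $a,b \geq 0$ with $a+b = b_2(M)+1$ and $a-b \equiv \pm[P] \pmod 4$, where $[P] \in \Omega_4^{\text{Pin}^+} \cong \f{Z}_{16}$. Reducing $[P]$ mod $16$ to some $c \in \{0,\dots,15\}$ and using the parity relation $c \equiv b_2(M)+1 \pmod 2$ from \cite{HS} Theorem 3.6, I would pick $l$ with $0 \leq c - 4l < 4$ and set $a = (b_2(M)+1+c-4l)/2$ and $b = (b_2(M)+1-c+4l)/2$; these are nonnegative integers with the required sum and signature-residue once $b_2(M) > 0$, and the degenerate subcase $b_2(M) = 0$ forces $M \cong \f{R}P^5$ by \tref{bases}(1), handled by $B = \f{C}P^2$. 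The only ``hard part'' here is elementary arithmetic bookkeeping — verifying nonnegativity of the exponents and that $a-b$ lands in the correct residue class mod $4$ — rather than a genuine obstacle, and together the three cases exhaust all possibilities.
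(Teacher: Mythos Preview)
Your proposal is correct and follows essentially the same approach as the paper, which explicitly records the corollary as a byproduct of the final paragraph of the proof of \tref{bases}. The only cosmetic difference is that the paper handles all Type I manifolds uniformly with the single choice \(B=\#^{b_2(M)}\f{C}P^2\#\overline{\f{C}P^2}\) (which also satisfies the constraints for the exceptional manifolds in \(S\)), whereas you split off the exceptional cases and verify them separately; both work.
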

  Combining \tref{bases} with Theorem 3.7 in \cite{HS} we can characterize the manifolds satisfying \tref{thm}.

  \begin{cor}\label{thmmfds}Let \(M^5\) be a 5-manifold.  The following are equivalent:
  	
  	\begin{itemize}
  		\item[1)] \(M^5\) is Type III and admits a free \(S^1\) action with a simply connected quotient. 
  		\item [2)]There exists \(B^4=\cpcs{a}{b}\),  \(a,b\in\f{Z}_{\geq0}\) such that \(M^5\) is the total space of a principal bundle \(S^1\to M^5\to B^4\)  with first Chern class 2d, where \(d\in H^2(B^4,\f{Z})\) is  primitive and \(w_2(TB^4)=d\) mod 2.
  		\item[3)] There exits \(k\in\f{Z}_{\geq0}\) and \(q\in\{0,1,...,8\}\), with \(k>0\) if \(q\) is 3,5, or 7, such that \(M^5\) is diffeomorphic to \[X(q)\#_{S^1}(\#^k(S^2\times S^2)\times S^1).\]
  	\end{itemize}
  	
  	If those conditions are satisfied and then \(\mods{Ric}(M^5)\) has infinitely many path components.  
  \end{cor}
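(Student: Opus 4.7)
The plan is to establish the equivalences $1)\Leftrightarrow 2)\Leftrightarrow 3)$ by reducing to \tref{bases} and to the Hambleton--Su classification, and then to deduce the moduli space statement by a short case analysis appealing to \tref{thm}.

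For $2)\Rightarrow 1)$ I would apply \lref{buntop}: the hypotheses on the first Chern class give $\pi_1(M)=\f{Z}_2$ together with the Type III conditions $w_2(TM)\ne 0$ and $w_2(T\tilde M)=0$, while simple connectivity of $B$ is assumed. For $1)\Rightarrow 2)$ I would read off the Type III row of \tref{bases} part 2): any quotient $B\in Q(M)$ is a simply connected non-spin 4-manifold with $b_2(B)=b_2(M)+1$ and $\mathrm{sign}(B)=\pm[P]\bmod 4$. Since $B$ is non-spin its intersection form is odd, and Donaldson's theorem (as invoked in the Type III portion of the proof of \tref{bases}) forces that form to be diagonal, so $H^*(B,\f{Z})\cong H^*(\cpcs{a}{b},\f{Z})$ for some $a,b\geq 0$. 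The remark following \tref{thm} allows us to replace $B$ by $\cpcs{a}{b}$ itself, and the existence of the required primitive class $d$ with $w_2(TB)=d\bmod 2$ is precisely what the Type III argument in the proof of \tref{bases} produces.

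The equivalence $1)\Leftrightarrow 3)$ is a direct application of \cite{HS} Theorem 3.7, which classifies 5-manifolds with $\pi_1=\f{Z}_2$ acting trivially on $\pi_2$ and with torsion-free $H_2$: the Type III ones take the form $X(q)\#_{S^1}(\#^k(S^2\times S^2)\times S^1)$ with $q=[P]\in\Omega_4^{\text{Pin}^+}/\pm$. The constraint $k>0$ for $q\in\{3,5,7\}$ reflects the fact that the three non-standard homotopy $\ar{}P^5$'s do not themselves admit free $S^1$ actions with simply connected quotient; indeed by \tref{bases} part 1) any 5-manifold with $b_2=0$ admitting such an action is diffeomorphic to $\ar{}P^5=X(1)$, so stabilization by at least one $S^2\times S^2$ factor is needed to reach $X(3),X(5),X(7)$.

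For the concluding statement on $\mods{Ric}(M)$, I would use the description from 2). When $a+b\geq 2$, \tref{thm} directly supplies infinitely many path components. When $a+b=0$ we have $B=S^4$, which is spin and has $H^2(B,\f{Z})=0$, so no primitive $d$ exists and the case is vacuous. When $a+b=1$ we have $B=\f{C}P^2$ up to orientation, the only primitive classes are $\pm 1\in\f{Z}\cong H^2(B,\f{Z})$, and the resulting principal $S^1$-bundle of Chern class $\pm 2$ is the $\f{Z}_2$-quotient of the Hopf fibration $S^5\to\f{C}P^2$, so $M\cong\ar{}P^5$; the theorem of Dessai and Gonz\'alez-\'Alvaro \cite{DG} then supplies the infinitely many components in $\mods{Ric}(\ar{}P^5)$. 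The main point requiring care is the $k>0$ constraint in 3), which must be matched to the Type III manifolds actually admitting free $S^1$ actions with simply connected quotient rather than to those merely satisfying the Hambleton--Su cobordism hypotheses; this is handled via \tref{bases} part 1) as sketched above.
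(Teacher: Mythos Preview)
Your proposal is correct and follows essentially the same approach as the paper: the key ingredients---\lref{buntop}, \tref{bases} (including \cref{goodbases}), \cite{HS} Theorem 3.7, and the appeal to \cite{DG} for the $\ar{}P^5$ case---are all the same. The differences are purely organizational: the paper proves the implications in a cycle $1)\Rightarrow 2)\Rightarrow 3)\Rightarrow 1)$, using \cref{goodbases} directly for $1)\Rightarrow 2)$, whereas you argue $1)\Leftrightarrow 2)$ and $1)\Leftrightarrow 3)$ separately and replace the citation of \cref{goodbases} by the cohomology-ring remark in the introduction together with the Type III argument inside the proof of \tref{bases}. One small point worth tightening in your $3)\Rightarrow 1)$ step: you should state explicitly (as the paper does) that \cite{HS} Theorem 3.7 guarantees $X(q)\#_{S^1}(\#^k(S^2\times S^2)\times S^1)$ is orientable with torsion-free $H_2$ and trivial $\pi_1$-action on $\pi_2$, so that \tref{bases} part 1) applies to produce the free $S^1$ action.
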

  
  \begin{proof}
  	 1) implies 2) by \lref{buntop} and \cref{goodbases}  .   If we assume 2), \lref{buntop} implies that \(M\) is a Type III manifold with \(\pi_1\) acting trivially on \(\pi_2\) and \(H_2(M,\f{Z})\) torsion free.  Theorem 3.7 in \cite{HS} shows that every such manifold is diffeomorphic to   	\(X(q)\#_{S^1}(\#^k(S^2\times S^2)\times S^1)\) for some \(q\in\{0,...,8\}\) and some \(k\in\f{Z}_{\geq0}\).  If \(k=0\) and \(q\) is odd, \(b_2(X(q))=0\) and using \tref{bases} \(M\) must be diffeomorphic to \(\ar{}P^5=X(1).\)  
  	 
  	 	By Theorem 3.7 in \cite{HS}, \(M=X(q)\#_{S^1}(\#^k(S^2\times S^2)\times S^1)\) is an orientable Type III manifold with \(\pi_1(M)\) acting trivially on \(\pi_2(M)\),   \(H_2(M,\f{Z})\) torsion free, and \(b_2(M)=2k+(1+(-1)^q)/2\).  Thus by \tref{bases}  3) implies 1).  
  	
  	Now assume \(M\) satisfies the conditions.  If \(b_2(M)>0\), then by \lref{buntop} the integers \(a,b\) in 2) must satisfy \(a+b\geq2\).   Then \tref{thm} implies that \(\mods{Ric}(M)\) has infinitely many path components.  By \tref{bases}, if \(b_2(M)=0,\) then 1) implies that \(M\cong \ar{}P^5.\)  \(\mods{Ric}(\ar{}P^5)\) is shown to have infinitely many path components in \cite{DG}.  
  \end{proof}

  \begin{rem}
  	
  	\
  	
  	\begin{itemize}
  		\item By the discussion proceeding Theorem 3.7 in \cite{HS} the manifolds described in \cref{thmmfds}  can also be constructed by applying \(\#_{S^1}\) to the homotopy \(\ar{}P^5\)'s.  For instance
  		\[X(q)\#_{S^1}(\#^k(S^2\times S^2)\times S^1)\cong X(q)\#_{S^1}X(0)\#_{S^1}...\#_{S^1}X(0).\]
  		\item It is  shown in \cite{DG} that \(\mods{Ric}\) also has infinitely many components for the homotopy \(\ar{}P^5\)'s \(X(3),X(5)\) and \(X(7)\) .
  		\item   A characteristic submanifold \(P\subset X(q)\#_{S^1}(\#^k(S^2\times S^2)\times S^1)\) has class \(q\in\Omega_4^{\text{Pin}^+}/\pm=\{0,...,8\}\).  If we fix a non-spin simply connected 4-manifold \(B^4\), then a Type III total space of a principal \(S^1\) bundle over \(B\) will be diffeomorphic to \(X(q)\#_{S^1}(\#^k(S^2\times S^2)\times S^1).\) Using the table in \tref{bases} we see that \(q\) must satisfy \(q=\pm \text{sign}(B)\) mod 4.  It follows that there are 2,3 or 4 choices of \(q\), and the same number of diffeomorphism types of Type III total spaces, if sign\((B)\) is 2, 0, or \(\pm\)1 mod 4 respectively.  The value of \(q\) can be determined, up to two possibilities, using \lref{betaval}.  The set of diffeomorphism types of Type I total spaces is more complicated, but can be computed using \tref{bases} and Theorem 3.7 in \cite{HS}.  If \(B^4\) is a simply connected spin 4-manifold there exists a unique diffeomorphism type of total spaces with \(\pi_1=\f{Z}_2\) , represented by \((S^2\times \ar{}P^3)\#_{S^1}(\#^{(b_2(B)-2)/2}(S^2\times S^2)\times S^1).\) 
  	\end{itemize}
 
  \end{rem}

Using a result of Gilkey, Park and Tuschmann, we can lift metrics from the quotients described by \cref{goodbases} to prove the following:

\begin{cor}\label{onemetric}
	Let \(M\) be a 5 manifold with \(\pi_1(M)=\f{Z}_2\) admitting a free \(S^1\) action with a simply connected quotient.  Then \(M\) admits a metric with positive Ricci curvature.  
\end{cor}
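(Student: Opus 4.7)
The plan is to apply \cref{goodbases} to choose a convenient simply connected quotient, take a Ricci positive metric on that quotient, and lift it to $M$ via the principal $S^1$ bundle structure.

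First, \cref{goodbases} provides a free $S^1$ action on $M$ whose quotient $B^4$ is diffeomorphic to either $\#^cS^2\times S^2$ or $\cpcs{a}{b}$. Both classes of simply connected 4-manifolds admit Ricci positive metrics: for $\#^cS^2\times S^2$ such a metric is constructed by Sha and Yang in \cite{SY1}, while for $\cpcs{a}{b}$ the construction of Sha and Yang in \cite{SY} on $\#^{a-b}\f{C}P^2\#^bS^2\times S^2$ (which is diffeomorphic to $\cpcs{a}{b}$ when $a\geq b$, with orientation reversal handling $b>a$) gives such a metric; alternatively, the modification of Perelman's theorem cited in the proof of \tref{thm} produces Ricci positive metrics on $\cpcs{a}{b}$ directly.

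Second, I invoke the Gilkey-Park-Tuschmann lifting theorem \cite{GPT}: given a principal $S^1$ bundle $\pi\colon M\to B$ with non-zero first Chern class and a Ricci positive metric $g_B$ on $B$, the connection metric $\pi^*g_B+t^2\,\theta\otimes\theta$ on $M$ is Ricci positive for sufficiently small $t>0$, where $\theta$ is any principal connection form. The bundle here necessarily has non-zero first Chern class, since otherwise $M$ would split as $B\times S^1$, forcing $\f{Z}\subset\pi_1(M)=\f{Z}_2$, a contradiction.

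The statement is essentially an assembly of three previously established results---the topological classification of \cref{goodbases}, the Sha-Yang or modified Perelman construction of Ricci positive metrics on the relevant simply connected bases, and the principal $S^1$ bundle lifting theorem---so no serious obstacle is expected. The only edge case worth flagging is $b_2(M)=0$, where \tref{bases} forces $M\cong\ar{}P^5$; that manifold carries its standard round metric of positive Ricci curvature and is handled separately.
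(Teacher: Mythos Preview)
Your proof is correct and follows essentially the same route as the paper: invoke \cref{goodbases} to get a quotient of the form $\#^cS^2\times S^2$ or $\cpcs{a}{b}$, put a Ricci positive metric on the base (Sha--Yang \cite{SY1} for the former, Perelman's modification \lref{per} for the latter), and lift via \cite{GPT}. The only cosmetic differences are that the paper cites the modified Perelman construction directly rather than routing through \cite{SY}, and it states the Gilkey--Park--Tuschmann hypothesis as ``$\pi_1(M)$ finite'' (given by assumption) rather than ``non-zero first Chern class''; your separate handling of the $b_2(M)=0$ case is harmless but unnecessary, since $\CP^2$ already falls under the general argument.
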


\begin{proof}
 In \cite{SY1}  Sha and Yang put a metric of positive Ricci curvature on \(\#^cS^2\times S^2.\)  A modification of Perelman's construction in \cite{P} puts such a metric on \(\cpcs{a}{b}\), see \lref{per}.  \cref{goodbases} shows that \(M^5\) admits a free \(S^1\) action with quotient \(B^4\) diffeomorphic to one of those manifolds.   Gilkey, Park, and Tuschmann \cite{GPT} showed that if \(B^4\) admits Ric\(>0,\) \(M^5\) is the total space of a principal bundle over \(B^4\) with compact connected structure group \(G,\) and \(\pi_1(M^5)\) is finite, then \(M\) admits a \(G\) invariant metric with Ric\(>0.\) In this case \(G=S^1\), \(\pi_1(M)=\f{Z}_2\) and the corollary follows.  
\end{proof}

The corresponding result in the simply connected case was proved by Corro and Galaz-Garcia in \cite{CG}.  By Lichnerowicz's theorem, many simply connected 4-manifolds, such as a K3 surface, do not admit even positive scalar curvature .  It is interesting to note that \cref{onemetric} and the results of \cite{CG}  imply that total spaces with \(\pi_1=0\) or \(\f{Z}_2\) of principal \(S^1\) bundles over such manifolds nonetheless admit metrics of positive Ricci curvature. 

\section{\(\eta\) Invariant }\label{ettta}
We use the \(\eta\) invariant of the spin\(^c\) Dirac operator, which we define in this section, to distinguish components of geometric moduli spaces.  A manifold \(M\) is spin\(^c\) if there exists a  complex line bundle \(\lambda\) over \(M\) such that the frame bundle of \(TM\oplus\lambda\), a principal \(SO(n)\times U(1)\) bundle, lifts to a principal \(\text{Spin}^c(n)=\text{Spin}(n)\times_{\f{Z}_2}U(1)\) bundle.  A manifold is spin\(^c\) if and only if the second Stiefel-Whitney class \(w_2(TM)\) is the image of an integral class \(c\in H^2(M,\f{Z})\) under the map \(H^2(M,\f{Z})\to H^2(M,\f{Z}_2).\)  In this case \(c\), which we call the canonical class of the spin\(^c\) structure, is the first Chern class of \(\lambda\), which we call the canonical bundle.  

Using complex representations of Spin\(^c(n)\) we form spin\(^c\) spinor bundles and equip them with actions of the complex Clifford algebra bundle \(\f{C}l(TM)\).  When the dimension of \(M\) is even there is a unique irreducible such bundle \(S\) with a natural grading \(S=S^+\oplus S^{-}\).  Given a metric \(g\) on \(M\) and a unitary connection \(\nabla\) on \(\lambda\) we can construct a spinor connection \(\nabla^s\) on \(S,\) compatible with Clifford multiplication, and  a spin\(^c\) Dirac operator \(D^c_{g,\nabla}\) acting on sections of \(S.\)  See \cite{LM} Appendix D for details.  The Bochner-Lichnerowicz identity for this operator is
\begin{equation}\label{boch}
(D^c_{g,\lambda})^2=(\nabla^s)^*\nabla^s+\ov{4}\text{scal}(g)+\frac{i}{2}F^\nabla
\end{equation}

\noindent where the complex two-form \(F^\nabla\) is the curvature of \(\nabla\).  This form acts on the spinor bundle \(S\) by way of the vector bundle isomorphism \(\Lambda T^*M\to\Lambda TM\to\f{C}l(TM)\) given by \(g\).  
The operator \((\nabla^s)^*\nabla^s\) is nonnegative definite with respect to the \(L^2\) inner product on a closed manifold or a compact manifold with boundary on which the Atiyah-Patodi-Singer boundary conditions have been applied.  See \cite{APSII} Theorem 3.9 for details.    The remaining term \(\ov{4}\text{scal}(g)+\frac{i}{2}F^\nabla\) is positive definite if 
\begin{equation}\label{curvs}\text{scal}(g)>2|F^\nabla|_g,\end{equation}
where the norm \(|\cdot|_g\) is the operator norm on \(\f{C}l(TM)\) acting on \(S.\) In particular, ker\((D^c_{g,\nabla})=0\) if \eqref{curvs} is satisfied.   For a later purpose we note that for \(\omega\in\Omega^2(M,\f{C})\) and an orthonormal basis \(\{e_i\}\) of \(TM\) with respect to \(g,\) we have 
\begin{equation}\label{norm}
|\omega|_g\leq\sum_{i<j}|\omega(e_i,e_j)|.
\end{equation}

Suppose  \(W\) is a spin\(^c\) manifold with boundary \(\partial W=M,\) with \(\lambda\) and \(c\) defined on \(W\) as above.  W induces a spin\(^c\) structure on \(M\) with canonical class \(c|_{\partial W}\) and canonical bundle \(\lambda|_{\partial W}.\)  Choose a metric \(h\) on \(W\) and a connection \(\nabla\) on \(\lambda\) which are product-like near \(\partial W\), i.e.
\[h=h|_{\partial W}+dr^2\] and 
\[\nabla=\text{proj}_{M}^*(\nabla|_{\partial W})\]
on a collar neighborhood \(U\cong M\times I\) where \(I\) is an interval with coordinate \(r.\)  Applying the Atiyah-Patodi-Singer boundary conditions, the Atiyah-Patodi-Singer index theorem \cite{APS} states that
\begin{equation}\label{aps}\text{ind}(D^c_{h,\nabla}|_{S^+})=\int_We^{c_1(\nabla)/2}\hat{A}(p(g))-\frac{\text{dim}(\text{ker}(D^c_{h|_{\partial{W}},\nabla|_{\partial{W}}}))+\eta(D^c_{h|_{\partial{W}},\nabla|_{\partial{W}}})}{2}.\end{equation}
Here \(c_1(\nabla)\) and \(p(g)\) are the Chern-Weil Chern and Pontryagin forms constructed from the curvature tensors of the connection and metric respectively.  \(\hat{A}\) is the polynomial in the Pontryagin forms and   \(D^c_{h|_{\partial{W}},\nabla|_{\partial{W}}}\) is the spin\(^c\) Dirac operator on \(M\) constructed using the induced metric and connection. 

\(\eta\) is an analytic invariant of the spectrum of an elliptic operator defined in \cite{APS}.  Given an elliptic differential operator \(D\)  with spectrum \(\{\lambda_i\}\) we define a complex function
\[\eta(D,s)=\sum_{\lambda_i\neq0}\text{sign}(\lambda_i)|\lambda_i|^{-s}.\]

\noindent One shows that the function is analytic when the real part of \(s\) is large and  Atiyah, Patodi and Singer showed that it can be analytically continued to a meromorphic function which is analytic at 0.  Thus we define \(\eta(D)=\eta(D,0).\)  If a diffeomorphism \(\phi\) preserves the spin\(^c\) structure, then \(D^c_{\phi^*g,\phi^*\nabla}\) is conjugate to \(D^c_{g,\nabla}\) and hence they have the same spectrum and the same values of \(\eta.\)  We will use \eqref{aps} to calculate \(\eta\) for an operator \(D_{g,\bar\nabla}\) on a manifold \(M\) by finding a suitable \(W\) with \(\partial W=M\) and extending \(g\), \(\bar\nabla\) to product like \(h\) and \(\nabla\) on W.   

  Kreck and Stolz combined the \(\eta\) invariant with information about the Chern-Weil forms of the metric to get an invariant for metrics on \(4n+3\) dimensional spin manifolds.  We prove that the \(\eta\) invariant alone provides the desired invariant for certain \(4n+1\)  dimensional spin\(^c\) manifolds.

\begin{thm}\label{etta}
Let \(M^{4n+1}\) be a closed spin\(^c\) manifold with canonical class \(c\in H^2(M,\f{Z})\) and canonical bundle \(\lambda\).	Suppose \(c\) and the Pontryagin classes \(p_i(TM)\) are torsion and \(g_t\), \(t\in[0,1]\) is a smooth path of metrics on \(M\) with scal\((g_t)>0\).  If \(\nabla_0\) and \(\nabla_1\) are flat unitary connections on \(\lambda\), then 
	\[\eta(D^c_{g_0,\nabla_0})=\eta(D^c_{g_1,\nabla_1}).\]  
	
\end{thm}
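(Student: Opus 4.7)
The plan is to apply the Atiyah--Patodi--Singer index theorem \eqref{aps} on the cylinder $W = M\times[0,L]$ for large $L$, with a carefully chosen extension of $(g_0,\nabla_0)$ and $(g_1,\nabla_1)$. A crucial observation is that, because $U(1)$ is abelian, the straight-line path $\nabla_t=\nabla_0+tA$ (where $A:=\nabla_1-\nabla_0\in\Omega^1(M,i\R)$, $t\in[0,1]$) consists entirely of flat unitary connections: since both endpoints are flat, $dA=F^{\nabla_1}-F^{\nabla_0}=0$, and $F^{\nabla_t}=F^{\nabla_0}+t\,dA=0$. I will pull back $\lambda$ to $W$ and equip it with the connection $\nabla$ whose connection $1$-form is $A_0+\tau(r)A$, where $\tau\colon[0,L]\to[0,1]$ is a cutoff equal to $0$ near $r=0$ and $1$ near $r=L$; its curvature on $W$ is $F^\nabla=\tau'(r)\,A\wedge dr$. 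Likewise extend the path of metrics by $h=g_{\tau(r)}+dr^2$, product-like near $\partial W$.

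For $L$ large, $|F^\nabla|_h=O(L^{-1})$ while $\text{scal}(h)$ stays bounded below by a positive constant (since $\text{scal}(g_t)>0$ uniformly on the compact interval $[0,1]$ and the $\tau$-derivative corrections are $O(L^{-2})$). Hence $\text{scal}(h)>2|F^\nabla|_h$ on all of $W$ for $L$ sufficiently large, and by \eqref{boch} and \eqref{curvs}, $\text{ker}(D^c_{h,\nabla}|_{S^\pm})=0$ with APS boundary conditions, so $\text{ind}(D^c_{h,\nabla}|_{S^+})=0$. The flatness of $\nabla_i$ together with $\text{scal}(g_i)>0$ similarly gives $\text{ker}(D^c_{g_i,\nabla_i})=0$ for $i=0,1$. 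The APS formula \eqref{aps} therefore reduces to
$$\eta(D^c_{g_1,\nabla_1})-\eta(D^c_{g_0,\nabla_0})=2\int_W e^{c_1(\nabla)/2}\hat{A}(p(h)).$$

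The hard part is showing this integral vanishes. Since $\hat A$ consists only of forms of degree $4k$, any $(4n+2)$-degree contribution to $e^{c_1(\nabla)/2}\hat A(p(h))$ must contain an odd power of $c_1(\nabla)$. But $c_1(\nabla)=\tfrac{i}{2\pi}\tau'(r)A\wedge dr$ satisfies $c_1(\nabla)^2=0$, so only the $j=1$ term survives:
$$\bigl(e^{c_1(\nabla)/2}\hat A(p(h))\bigr)_{4n+2}=\tfrac{i\tau'(r)}{4\pi}\,A\wedge dr\wedge \hat A(p(h))_{4n}.$$
Writing $\hat A(p(h))_{4n}=\alpha(r)+\beta(r)\wedge dr$ (non-$dr$ and $dr$ parts), the $\beta$-term drops out and
$$\int_W e^{c_1(\nabla)/2}\hat A(p(h))=\int_0^L\tau'(r)\,dr\int_M \tfrac{iA}{4\pi}\wedge\alpha(r).$$
The form $\alpha(r)$ is closed on $M$ (each degree component of the closed form $\hat A(p(h))$ is individually closed), and its de Rham class on $M$ is $\hat A(p(TM))_{4n}$. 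Since $p_i(TM)$ is torsion, this class vanishes, so $\alpha(r)=d\sigma_r$ is exact. Because $A$ is closed, Stokes's theorem on the closed manifold $M$ yields $\int_M A\wedge d\sigma_r=-\int_M dA\wedge\sigma_r=0$. Combined with $\text{ind}(D^c_{h,\nabla}|_{S^+})=0$, this gives $\eta(D^c_{g_0,\nabla_0})=\eta(D^c_{g_1,\nabla_1})$.
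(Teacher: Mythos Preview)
Your proof is correct and follows essentially the same strategy as the paper's: apply APS on a stretched cylinder $M\times I$, choose the metric $g_{\tau}+dr^2$ (the paper uses the equivalent parametrization $g_t+L^2dt^2$ on $M\times[0,1]$) and the interpolating connection, then check that for large $L$ the inequality \eqref{curvs} holds so the index and boundary kernels vanish, leaving only the characteristic integral.

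The one place you and the paper diverge is in the argument that this integral vanishes. The paper uses that the canonical class $c$ is torsion to conclude $c_1(\nabla)$ is globally exact on the cylinder, then applies Stokes together with the flatness of $\nabla$ on the boundary and the exactness of $p_j(g_i)$ there. You instead exploit the explicit form $c_1(\nabla)=\tfrac{i}{2\pi}\tau'(r)\,A\wedge dr$ to get $c_1(\nabla)^2=0$ for free, and then run Stokes slice by slice using $dA=0$ and the exactness of $\alpha(r)$ on $M$. Both are fine; your route is slightly more hands-on and does not need the torsion hypothesis on $c$ for the integral step (it is of course still needed so that flat $\nabla_0,\nabla_1$ exist). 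The only point you might state more carefully is why $\alpha(r)=\iota_r^*\hat A(p(h))_{4n}$ represents $\hat A(p(TM))_{4n}$: this follows because $\iota_r^*TW\cong TM\oplus\underline{\R}$ and Chern--Weil forms are natural under pullback, so $\iota_r^*p_i(h)$ represents $p_i(TM)$.
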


\begin{proof}
	Modifying \(g_t\) if necessary we assume it is a constant path for \(t\) near \(0\) and \(1\).  Given \(L\in\ar{}_{>0}\), define a smooth metric \( g\) on \(M\times[0,1]\) by
	\[{g}=g_t+L^2dt^2.\]
 Then \( g\) is product-like near \(M\times\{0,1\}\).  One sees that scal\(({g})\) differs from scal\((g_t)\) by terms depending on the second fundamental form of each slice \(M\times{\{t\}}\), but the second fundamental form tends to 0 as \(L\to\infty\), so for large \(L\) we have scal\(( g)>0\).
	
	The difference of unitary connections on a complex line bundle is an imaginary one form.  Define \(\alpha\in\Omega(M)\) such that
	\[i\alpha=\nabla_1-\nabla_0.\]  Since both connections are flat, \(d\alpha=0\).  Let \(\pi:M\times [0,1]\to M\) be the projection and let \(f:M\times[0,1]\to[0,1]\) be the projection onto \([0,1]\) followed by be a smooth function which is \(0\) in a neighborhood of \(0\) and \(1\) in a neighborhood of \(1\). Define a connection  on \(\pi^*\lambda\) by \[{\nabla}=\pi^*\nabla_0+if\pi^*\alpha.\] Then, since \(\nabla_0\) is flat,
	\[F^\nabla=idf\wedge\pi^*\alpha.\]
	Let \({e_i}\) be an orthonormal frame for \({g}\) at a point \((p,t)\), such that \(e_1=\ov{L}\partial_t\).  Then
	\[2\sum_{i<j}|(df\wedge\alpha)(e_i,e_j)|=\frac{2\partial_tf}{L}\sum_{i>1}\alpha(e_i).\]  Since  \(e_i\), \(i>2\), is tangent to \(M\times\{t\}\), it does not  depend on \(L\).  Using \eqref{norm},  for large \(L\) we have
	\begin{equation*}\label{prodcurv}\text{scal}({g})>2|F^{\nabla}|_{{g}}. \end{equation*}
	The definition of \(f\) ensures that \({\nabla}\) is product-like near \(\partial(M\times I)\).  Then by \eqref{boch}  \(D^c_{g,\nabla}\) has trivial kernel and ind\((D^c_{g,\nabla}|_{S^+})=0.\)   
	
	Since \(F^{\nabla_i}=0\) for \(i=1,2\)
	\[\text{scal}(g_i)>0=2|F^{\nabla_i}|_{g_i}\] and hence \eqref{boch} implies  ker\(D^c_{g_i,\nabla_i}=\{0\}.\)    We now apply the Atiyah-Patodi-Singer index theorem \eqref{aps}.  The boundary of \(M\times I\) is two copies of \(M\) with opposite orientations.  The spectrum of the Dirac operator on \(M\times\{0,1\}\) is the union of the spectra on  \(M\times\{0\}\) and \(M\times\{1\}\), and the \(\eta\) invariant is the sum of the two \(\eta\) invariants.  When we change the orientation of an odd dimensional manifold, the Dirac operator changes by a sign.  Thus the Atiyah-Patodi-Singer theorem yields

	\[\text{ind}(D^c_{g,\nabla}|_{S^+})=\int_{M\times[0,1]}e^{c_1(\nabla)/2}\hat{A}(p(g))\]\[-\ov{2}\left({\text{dim}(\text{ker}(D^c_{g_0,\nabla_0}))+\text{dim}(\text{ker}(D^c_{g_1,\nabla_1}))+\eta(D^c_{g_0,\nabla_0})-\eta(D^c_{g_1,\nabla_1})}\right)\]
	
	and hence
	
	\[\eta(D^c_{g_1,\nabla_1})-\eta(D^c_{g_0,\nabla_0})=2\int_{M\times[0,1]}e^{c_1({\nabla})}\hat{A}(p({g})).\]
	
	\noindent Since \(\pi_1^*c\) is torsion, \(c_1(\nabla)\) is exact. Because  \(\nabla\) is flat near the boundary  \(c_1(\nabla)|_{\partial(M\times I)}=0\). Furthermore \({g}\) is product-like near the boundary so \(p(g)|_{M\times \{i\}}=p(g_i)\).  Since the real Pontryagin classes of \(M\) vanish \(p_j({g_i})\) is exact for \(j>0\). By Stokes' theorem, and since the dimension of \(M\) is \(4n+1\), the integral vanishes.        
\end{proof}

As a corollary we show how to use the \(\eta\) invariant to detect path components of moduli spaces of metrics with curvature conditions no weaker than positive scalar curvature. 

\begin{cor}\label{moduli}
	Let \(M\) be as in \tref{etta}.  Let \((g_i,\nabla_i)\) be a sequence of Riemannian metrics \(g_i\) with Ric\((g_i)>0,\) and flat  connections \(\nabla_i\) on \(\lambda\) such that \(\{\eta(D^c_{g_i,\nabla_i})\}_i\) is infinite.  Then \(\mathfrak{M}_{\text{\normalfont Ric}>0}(M)\) and \(\mods{\normalfont scal}(M)\) have infinitely many path components.       
\end{cor}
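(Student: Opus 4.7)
\medskip

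The plan is to define a real-valued invariant $\bar\eta$ on $\mathfrak{R}_{\mathrm{scal}>0}(M)$ using the $\eta$ invariant of a spin$^c$ Dirac operator, to show via \tref{etta} that $\bar\eta$ is locally constant, and then to show it is $\mathrm{Diff}(M)$-invariant, so that it descends to a locally constant function on the moduli space.

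First I would reduce to the scalar curvature case. Since $\mathrm{Ric}(g)>0$ implies $\mathrm{scal}(g)>0$, the inclusion $\mathfrak{R}_{\mathrm{Ric}>0}(M)\hookrightarrow\mathfrak{R}_{\mathrm{scal}>0}(M)$ is equivariant for the $\mathrm{Diff}(M)$ action and continuous, so it descends to a continuous map of moduli spaces. A path of $\mathrm{Ric}>0$ metrics is in particular a path of $\mathrm{scal}>0$ metrics, so if $[g_i]$ and $[g_j]$ are distinct in $\mathfrak{M}_{\mathrm{scal}>0}(M)$ then they are distinct in $\mathfrak{M}_{\mathrm{Ric}>0}(M)$ as well. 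Thus it suffices to produce infinitely many distinct components in $\mathfrak{M}_{\mathrm{scal}>0}(M)$.

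Next I would define $\bar\eta\colon\mathfrak{R}_{\mathrm{scal}>0}(M)\to\mathbb{R}$ by $\bar\eta(g):=\eta(D^c_{g,\nabla})$ for any flat unitary connection $\nabla$ on $\lambda$. The flat connections on $\lambda$ form a nonempty affine space modeled on the closed imaginary $1$-forms since the Chern class $c$ is torsion. Applying \tref{etta} to the constant path $g_t\equiv g$ with any two flat $\nabla_0,\nabla_1$ shows $\bar\eta(g)$ is independent of the chosen flat connection, and applying \tref{etta} to any smooth path $g_t$ of $\mathrm{scal}>0$ metrics (using the same flat connection at both endpoints) shows $\bar\eta$ is constant on path components of $\mathfrak{R}_{\mathrm{scal}>0}(M)$.

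For $\mathrm{Diff}(M)$-invariance, given $\phi\in\mathrm{Diff}(M)$, the pullback of the fixed spin$^c$ structure $\sigma$ is another spin$^c$ structure $\phi^*\sigma$ with canonical bundle $\phi^*\lambda$; pullback of a flat connection is flat, and the operator $D^c_{\phi^*g,\phi^*\nabla}$ associated to $\phi^*\sigma$ is unitarily conjugate to $D^c_{g,\nabla}$ via $\phi$, hence shares its spectrum and $\eta$ value. Since $c$ is torsion, so is $\phi^*c$, and the analogously defined $\bar\eta$ for the spin$^c$ structure $\phi^*\sigma$ is likewise well-defined and invariant along paths of $\mathrm{scal}>0$ metrics; the conjugation identity yields $\bar\eta(\phi^*g)=\bar\eta(g)$. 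Hence $\bar\eta$ descends to a locally constant function on $\mathfrak{M}_{\mathrm{scal}>0}(M)$ whose value on $[g_i]$ equals $\eta(D^c_{g_i,\nabla_i})$, and the hypothesis that this sequence takes infinitely many values forces infinitely many path components. The only subtlety I foresee is the diffeomorphism step when $\phi^*\sigma\not\cong\sigma$, but the torsion hypothesis on $c$ makes $\phi^*\lambda$ a topologically trivial-up-to-torsion line bundle so that the same flat-connection argument supplied by \tref{etta} compares the two invariants.
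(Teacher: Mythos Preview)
Your argument has a genuine gap at the $\mathrm{Diff}(M)$-invariance step. The conjugation identity you invoke gives
\[
\eta\bigl(D^c_{\phi^*g,\,\phi^*\nabla}\bigr)=\eta\bigl(D^c_{g,\,\nabla}\bigr),
\]
but the Dirac operator on the left is the one associated to the \emph{pulled-back} spin$^c$ structure $\phi^*\sigma$, not to the fixed structure $\sigma$ used to define $\bar\eta$. What you need is $\bar\eta_\sigma(\phi^*g)=\bar\eta_\sigma(g)$, and the conjugation identity only gives $\bar\eta_{\phi^*\sigma}(\phi^*g)=\bar\eta_\sigma(g)$. \tref{etta} cannot bridge this: it compares $\eta$ values for different metrics and flat connections within a \emph{single} spin$^c$ structure, not across two different spin$^c$ structures. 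In general $\bar\eta_\sigma$ and $\bar\eta_{\phi^*\sigma}$ are distinct functions on $\mathfrak{R}_{\mathrm{scal}>0}(M)$, so your $\bar\eta$ need not descend to the full moduli space.

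The paper repairs exactly this point by working instead with the subgroup $\mathrm{Diff}^c(M)\subset\mathrm{Diff}(M)$ of diffeomorphisms preserving the spin$^c$ structure. For these, conjugation really does give $\bar\eta_\sigma(\phi^*g)=\bar\eta_\sigma(g)$, and together with \tref{etta} (plus Ebin's slice theorem to pass from connected components of the quotient back to paths upstairs) one obtains infinitely many components in $\mathfrak{R}_{\mathrm{scal}>0}(M)/\mathrm{Diff}^c(M)$. The passage to $\mathfrak{M}_{\mathrm{scal}>0}(M)$ then uses a finiteness argument: since $c$ is torsion there are only finitely many torsion classes in $H^2(M,\mathbb{Z})$, and $H^1(M,\mathbb{Z}_2)$ indexes finitely many spin$^c$ structures per canonical class, so the orbit of $\sigma$ under $\mathrm{Diff}(M)$ is finite and $\mathrm{Diff}(M)/\mathrm{Diff}^c(M)$ is finite. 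The fibers of $\mathfrak{R}_{\mathrm{scal}>0}/\mathrm{Diff}^c(M)\to\mathfrak{M}_{\mathrm{scal}>0}$ are therefore finite, and infinitely many components upstairs force infinitely many downstairs.
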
  

\begin{proof}
	Let Diff\(^c(M)\) be the set of diffeomorphisms of \(M\) which fix the spin\(^c\) structure.  For \(g\in\mathfrak{R}_{\text{scal}>0}\) let \([g]\) represent the image in \(\mathfrak{M}_{\text{scal}>0}\) and \([g]^c\)  the image in \(\mathfrak{R}_{\text{scal}>0}/\text{Diff}^c(M)\).  It follows from Ebin's slice theorem (\cite{E}, \cite{Bo}),  that if \([g_i],[g_j]\) are in the same connected component of \(\mathfrak{R}_{\text{scal}>0}/\text{Diff}^c(M)\) then \(g_i,\phi^*g_j\) are in the same path component of \(\mathfrak{R}_{\text{scal}>0}\) for some \(\phi\in\text{Diff}^c(M)\).  Then there is a path between them maintaining positive scalar curvature, and by \tref{etta} and the spin\(^c\) diffeomorphism invariance of \(\eta\) we have  \(\eta(D^c_{g_i,\nabla_i})=\eta(D^c_{\phi^*g_j,\phi^*\nabla_j})=\eta(D^c_{g_j,\nabla_j}).\)  Since \(\{\eta(D^c_{g_i,\nabla_i})\}\) is infinite,  \(\mathfrak{R}_{\text{scal}>0}/\text{Diff}^c(M)\) has infinitely many components.  
	
	Any diffeomorphism \(\phi\) pulls back the spin\(^c\) structure to another one with canonical class \(\phi^*c\), a torsion class in \(H^2(M,\f{Z})\).  There are finitely many such classes. The finite group \(H^1(M,\f{Z}_2)\) indexes the spin\(^c\) structures associated to each class.  Thus the orbit of the spin\(^c\) structure under Diff\((M)\) and the set Diff\((M)/\)Diff\(^c(M)\) are finite.  The fibers  of \(\mathfrak{R}_{\text{scal}>0}/\text{Diff}^c(M)\to\mathfrak{M}_{\text{scal}>0}\) are no larger than Diff\((M)/\)Diff\(^c(M)\), implying that \(\mathfrak{M}_{\text{scal}>0}\) has infinitely many components.   
	
	The proof is identical for \(\mods{Ric}\) since Ric \( >0\) implies  scal \( >0\).     
\end{proof}

\section{\(\eta\) invariant in dimension \(4n+1\) with free  \(S^1\) actions}\label{proof}
In this section we prove \tref{thm}.  We want to use the Atiyah-Patodi-Singer index theorem to calculate the \(\eta\) invariant of a metric on \(M\).  Many authors have computed \(\eta\) and related invariants on spin manifolds \(M\) by extending metrics  to  manifolds \(W\) with boundary  diffeomorphic to \(M\).  If the extension has positive scalar curvature, the index of the Dirac operator will vanish.  In the spin\(^c\) case, we must also extend an auxiliary connection.  A difficulty arises when the extended connection cannot be flat because the canonical class of the spin\(^c\) structure on \(W\) is not torsion.  Then the metric and connection must satisfy \eqref{curvs}.  The following theorem, which we prove in \sref{mandc} illustrates how to use certain free \(S^1\) actions on \(M\) to accomplish this. 

\begin{thm}\label{mc}
	Let \(S^1\) act freely on \(M\) by isometries of a Riemannian metric \(g_M\) with scal\((g_M)>0\) and assume \(\pi_1(M)\) is finite.  Let \(B=M/S^1\) be the quotient and \(\rho:W=M\times_{S^1}D^2\to B\) the associated disc bundle.  Suppose the first Chern class of the principal \(S^1\) bundle \(\pi:M\to B\) is \(\ell d\) for \(d\in H^2(B,\f{Z})\) and \(\ell\in\f{Z}\).  If \(\lambda\) is the complex line bundle over \(W\) with first Chern class \(\rho^*d\), then there exists a metric \(g_W\) on \(W\) and a connection \(\nabla\) on \(\lambda\) such that 
	
	\begin{equation}\label{cineq}\text{\normalfont scal}(g_W)>{l}|F^\nabla|_{g_W}.\end{equation}
		Furthermore there is a collar neighborhood \(V\cong M\times [0,N]\) of \(\partial W\cong M\) such that for \(t\in[0,N]\) near \(0\), \(g_W\) is a product metric
	\begin{equation}\label{prodm}g_W\cong g_M+dt^2\end{equation}
	and 
	\begin{equation}\label{prodc}\nabla \cong \text{proj}_{V,M}^*\bar{\nabla}\end{equation}  where  \(\bar{\nabla}\) is any flat unitary connection on \(\lambda|_{\partial W}\).  \end{thm}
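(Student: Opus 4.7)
\emph{Setup and metric.} Since $g_M$ is $S^1$-invariant, it has the Riemannian-submersion form $g_M = \pi^*\bar g + h^2\theta^2$, where $\theta$ is the connection $1$-form on $\pi\colon M\to B$ dual to the fundamental vector field of the $S^1$ action, $\bar g$ is the induced metric on $B$, and $h\colon B\to\R_{>0}$ is the fiber length. The curvature satisfies $d\theta = \pi^*\bar\Omega$ for a closed $2$-form $\bar\Omega$ on $B$ whose class equals $\ell d$ up to a universal constant. Identify $W\setminus B \cong M\times(0,1]$ via $[m,re^{i\psi}]\mapsto(m\cdot e^{i\psi}, r)$ and attach an outer collar $M\times[1,1+N]$ with $N$ to be chosen large. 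Put
\[g_W = \pi^*\bar g + \phi(r,b)^2 h^2 \theta^2 + dr^2,\]
where $\phi\cdot h\sim r$ near $r=0$ (so $g_W$ extends smoothly across the zero section $B\subset W$) and $\phi\equiv 1$ on $[1,1+N]$ (so $g_W = g_M + dr^2$ is product on the collar and restricts to $g_M$ on $\partial W$). On the transition region, $\phi$ will be chosen strongly concave in $r$.

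\emph{Connection.} Fix a unitary connection $\nabla_B$ on a line bundle $\lambda_B\to B$ with $c_1(\lambda_B)=d$, chosen so its curvature $F^{\nabla_B}$ is exactly $\bar\Omega/\ell$ (up to the usual factor of $i$), and set $\nabla_0 = \rho^*\nabla_B$ on $\lambda = \rho^*\lambda_B$. Because $d\theta = \pi^*\bar\Omega$ is exact on $M$, the restriction $\nabla_0|_{\partial W}$ has exact curvature, so it differs from any fixed flat connection $\bar\nabla$ on $\lambda|_{\partial W}$ by an imaginary $1$-form $i\alpha$. Using a cutoff $\chi\colon[1,1+N]\to[0,1]$ that is $0$ near $r=1$ and $1$ near $r=1+N$, define
\[\nabla = \nabla_0 - i\chi(r)\,\mathrm{proj}_M^*\alpha,\]
so that $\nabla = \nabla_0$ off the collar and $\nabla = \mathrm{proj}^*\bar\nabla$ near $\partial W$.

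\emph{Curvature inequality.} Off the collar, $F^\nabla = \rho^*F^{\nabla_B}$ is horizontal in $g_W$, so $\ell|F^\nabla|_{g_W} = |\bar\Omega|_{\bar g}$, independently of $\phi$. The Riemannian-submersion formula for $(W,g_W)\to(B,\bar g)$ yields
\[\mathrm{scal}(g_W) = \pi^*\mathrm{scal}(\bar g) - 2\tfrac{\phi''}{\phi} - \tfrac{1}{2}\phi^2 h^2 |\bar\Omega|^2_{\bar g} + (\text{lower order in }h,\phi).\]
Choosing $\phi$ strongly concave makes the warping term $-2\phi''/\phi$ as large as desired, while keeping $\phi$ small shrinks the O'Neill correction; together these force $\mathrm{scal}(g_W) > \ell|F^\nabla|_{g_W}$ pointwise off the collar. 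On the collar, $g_W$ is product with $\mathrm{scal}(g_W) = \mathrm{scal}(g_M) > 0$, and the only new contribution to $F^\nabla$ from the interpolation has $g_W$-norm $O(|\chi'|)$; taking $N$ large makes this negligible and the inequality propagates through the collar.

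\emph{Main obstacle.} The delicate technical step is the explicit construction of $\phi$: it must meet the smoothness and matching conditions at $r=0$ and on the collar, and on the transition zone be concave enough that $-2\phi''/\phi$ dominates $\sup_B|\bar\Omega|_{\bar g}$, yet small enough that the O'Neill term $\tfrac{1}{2}\phi^2 h^2|\bar\Omega|^2_{\bar g}$ does not overwhelm the gain. I expect to solve this by an explicit ODE-type construction for $\phi$ in the spirit of Perelman's and Sha--Yang's connected-sum constructions, tuned here to the stronger curvature inequality $\mathrm{scal}>\ell|F^\nabla|$ that is needed to make the index of the spin$^c$ Dirac operator on $W$ vanish.
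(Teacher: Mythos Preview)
Your strategy has the right shape, but there is a genuine gap at the interface between the disc region and the collar. On the collar you take $\phi\equiv 1$, so $\mathrm{scal}(g_W)=\mathrm{scal}(g_M)$, while near $r=1$ your connection is still $\nabla_0=\rho^*\nabla_B$, whose curvature is the \emph{horizontal} form $\rho^*F^{\nabla_B}$ with $\ell\,|F^{\nabla_0}|_{g_W}=|\bar\Omega|_{\bar g}$. Nothing in the hypotheses guarantees $\mathrm{scal}(g_M)>|\bar\Omega|_{\bar g}$, so \eqref{cineq} can fail there; your interpolation $\nabla=\nabla_0-i\chi\,\mathrm{proj}^*\alpha$ only kills the curvature near the outer boundary, not on the whole collar. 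A related problem: with $\phi(0)=0$, $\phi>0$ on $(0,1]$, and $\phi(1)=1$, Sturm comparison prevents $-\phi''/\phi$ from being uniformly large on $[0,1]$, so the warping term alone cannot dominate a fixed horizontal curvature.

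The paper fixes both issues with two ingredients you are missing. First, a global fibre-shrinking parameter $\epsilon$: the fibre metric is $\epsilon^2|X^*|^2(dr^2+f_2(r)^2\,d\theta^2)$, so the fibre Gaussian curvature is $-2f_2''/(\epsilon^2|X^*|^2 f_2)$ and blows up as $\epsilon\to 0$ while the horizontal part is unchanged. Second, and crucially, the connection is $\rho^*\nabla_B-i\gamma$, where $\gamma$ is built from a primitive $\alpha$ of $\pi^*\beta$ together with a radial cutoff $f_1$, arranged so that $\nabla$ is flat on the \emph{entire} collar $U$ and its only nonzero curvature component is the vertical one $F^\nabla(W_r,W_\theta)=-if_1'/\ell$. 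Measured in the $\epsilon$-shrunk fibre metric this is $O(1/\epsilon^2)$, the same order as the fibre scalar curvature; the choice $f_2(r)=-\tfrac12\int_0^r f_1-\epsilon r^3+r$ gives $-2f_2''-f_1'=12\epsilon r>0$, hence $\mathrm{scal}(g_W)-\ell|F^\nabla|=O(1/\epsilon)\to\infty$. The product form \eqref{prodm} and the match \eqref{prodc} with the prescribed $\bar\nabla$ are then achieved on a smaller collar $V\subset U$ via a Wraith-type mean-curvature deformation, where \eqref{cineq} is automatic because $\nabla$ is already flat on $U$.
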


\noindent Notice that here there are no restrictions on the dimension or Pontryagin classes of \(M\),   \(d\) need not be primitive, and no spin\(^c\) structure is required.    We next use \tref{mc} and \eqref{aps} to calculate \(\eta\)  for \(S^1\) invariant metrics on certain spin\(^c\) manifolds in  dimensions \(4n+1\).  

\begin{thm}\label{etaeqn}
	Let \(S^1\) act freely on a \(4n+1\) manifold \(M\) by isometries of a Riemannian metric \(g\) with scal\((g)>0\).  Assume \(\pi_1(M)\) is finite and let \(B=M/S^1\) be the quotient.   Suppose the first Chern class of the principal bundle \(S^1\to M\xrightarrow{\pi} B\) is \(\ell d\) where \(\ell\) is a positive even integer and \(w_2(TB)=d\text{ \normalfont mod }2\).  Finally assume the real Pontryagin classes of \(M\) vanish. Then  \(M\) admits a spin\(^c\) structure with canonical class \(\pi^*d.\)  If  \(\bar\nabla\) is a flat connection on the canonical bundle of this spin\(^c\) structure and \(D^c_{g,\bar\nabla}\) is the spin\(^c\) Dirac operator,  then 
	
	\[\label{etaeqneqn}\eta(D^c_{g,{\bar\nabla}})=\left<\frac{\sinh(d/2)\hat{A}(TB)}{\sinh(\ell d/2)},[B]\right>.\]

\noindent   When \(n=1\), 
\begin{equation}\label{etaeqneqn4}
\eta(D^c_{g,\bar\nabla})=\left<-\frac{(\ell^2-1)d^2+p_1(TB)}{24\ell},[B]\right>.\end{equation}
\end{thm}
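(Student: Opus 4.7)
The strategy is to apply the Atiyah--Patodi--Singer (APS) index theorem to the spin$^c$ Dirac operator on the disc bundle $W = M \times_{S^1} D^2$ furnished by \tref{mc}. First I would set up the spin$^c$ structures. Since $TM \oplus \underline{\R} \cong \pi^* TB$ (the trivial factor being tangent to the $S^1$-orbits), $w_2(TM) = \pi^* d \bmod 2$, so $M$ is spin$^c$ with canonical class $\pi^* d$. The vertical tangent bundle $V$ of $\rho \colon W \to B$ is isomorphic to $\rho^* \tilde L$, where $\tilde L$ is the associated complex line bundle with $c_1(\tilde L) = \ell d$; hence $w_2(TW) = \rho^*(w_2(TB) + \ell d) \equiv \rho^* d \bmod 2$ since $\ell$ is even. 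The spin$^c$ structure on $M$ thus extends to $W$ with canonical class $\rho^* d$ and canonical bundle $\lambda$.

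Next I would invoke \tref{mc} to obtain $(g_W, \nabla)$ satisfying $\text{scal}(g_W) > \ell |F^\nabla|_{g_W}$ and product-like near $\partial W = M$, with $\nabla|_M$ any prescribed flat unitary connection $\bar\nabla'$. Because $\ell \ge 2$, we have $\text{scal}(g_W) > 2|F^\nabla|_{g_W}$, so the Bochner--Lichnerowicz identity \eqref{boch} together with the condition \eqref{curvs} force $\ker D^c_{g_W,\nabla} = 0$ under APS boundary conditions, giving vanishing index; likewise $\ker D^c_{g,\bar\nabla'} = 0$ since $\bar\nabla'$ is flat and $\text{scal}(g) > 0$. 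The APS theorem \eqref{aps} then collapses to
\[\eta(D^c_{g,\bar\nabla'}) \;=\; 2 \int_W e^{c_1(\nabla)/2}\, \hat A(p(g_W)).\]
The Gysin sequence gives $\ell\, \pi^* d = \pi^*(\ell d) = 0$, so $\pi^* d$ is torsion in $H^2(M,\Z)$; combined with the vanishing of the real Pontryagin classes of $M$, \tref{etta} shows that $\eta$ is independent of the flat connection on $\lambda|_M$, so $\eta(D^c_{g,\bar\nabla}) = \eta(D^c_{g,\bar\nabla'})$.

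It remains to evaluate the Chern--Weil integral. The splitting $TW \cong \rho^*(TB \oplus \tilde L_{\R})$ yields, in de Rham cohomology,
\[\bigl[e^{c_1(\nabla)/2}\, \hat A(TW)\bigr] \;=\; \rho^*\!\left[e^{d/2}\, \hat A(TB) \cdot \frac{\ell d / 2}{\sinh(\ell d / 2)}\right].\]
I would reduce $\int_W$ to an integral over $B$ via the fiber integration $\rho_*$. The product-like structure from \tref{mc} ensures that $c_1(\nabla)$ vanishes in a collar of $\partial W$, so as a form it is compactly vertically supported and represents $\tfrac{1}{\ell}$ times the Thom class of $\tilde L$; hence $\rho_*(c_1(\nabla)^{k+1}) = d^k/\ell$ by the Thom isomorphism. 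Expanding $e^{c_1(\nabla)/2}\,\hat A(V)$ as a power series and applying $\rho_*$ term by term gives an explicit formula for $\rho_*(e^{c_1(\nabla)/2}\hat A(TW))$. After pairing with $[B]$ of dimension $4n$, the discrepancy between this explicit expression and $\sinh(d/2)\,\hat A(TB)/\sinh(\ell d/2)$ consists only of form-degrees congruent to $2 \bmod 4$ and hence vanishes on $[B]$, yielding
\[2 \int_W e^{c_1(\nabla)/2}\, \hat A(p(g_W)) \;=\; \left\langle \frac{\sinh(d/2)\, \hat A(TB)}{\sinh(\ell d/2)},\, [B] \right\rangle.\]
The formula \eqref{etaeqneqn4} then follows by Taylor expanding $\sinh(d/2)/\sinh(\ell d/2) = \ell^{-1} - (\ell^2-1)d^2/(24\ell) + O(d^4)$ and $\hat A(TB) = 1 - p_1(TB)/24 + \cdots$, keeping only the degree-$4$ piece on $B$.

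The main obstacle is the fiber-integration computation: making precise the identification of $c_1(\nabla)$ with a multiple of the Thom class near $\partial W$, controlling the Stokes boundary contributions that arise because $W$ is not closed, and then verifying the parity argument that allows the direct series to be replaced by the cleaner factor $\sinh(d/2)/\sinh(\ell d/2)$.
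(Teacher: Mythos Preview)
Your proposal is correct and follows essentially the same route as the paper: set up the spin$^c$ structure on $W$, invoke \tref{mc}, use \eqref{boch} to kill the index and kernel, reduce APS to the Chern--Weil integral, and evaluate it via the Thom class together with the parity observation that replaces $e^{d/2}$ by $\sinh(d/2)$.

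The only substantive difference is in how the integral is converted to a cohomological pairing. The paper quotes the Kreck--Stolz lemma (\lref{KSlem}) to pass from $\int_W$ to a cup product with $j^{-1}$ and then identifies $j^{-1}(\sinh(\rho^*d/2))$ explicitly using the Thom class $\Phi$. You instead argue directly that $c_1(\nabla)$, being zero on the collar, represents $\Phi/\ell$ in $H^2(W,\partial W;\R)$ (injectivity of $j$ in degree~2 uses $H^1(M;\R)=0$, which follows from $\pi_1(M)$ finite), and then push forward by $\rho_*$. These are equivalent computations; your version makes the Stokes boundary terms more explicit, and the obstacle you flag---controlling those terms---is indeed handled because every top-degree monomial contains at least one factor of $c_1(\nabla)$ (the $\hat A$-forms live in degrees divisible by $4$ while $\dim W=4n+2$), so the boundary contribution $\int_{\partial W} c_1(\nabla)^k\wedge\beta$ vanishes. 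One redundancy: you need not invoke \tref{etta} to change flat connections, since \tref{mc} already lets you prescribe $\bar\nabla$ as the boundary restriction of $\nabla$.
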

\begin{proof}
	Since \(TM\) is the direct sum of \(\pi^*TB\) and a trivial bundle generated by the action field of the \(S^1\) action,
	\[w_2(TM)=\pi^*w_2(TB)=\pi^*d\text{ mod }2\]  Let \(\mu\) be the complex line bundle over \(B\) associated to \(\pi:M\to B\).  Let \(W=M\times_{S^1}D^2\) and let \(\rho:W\to B\) be the disc bundle associated to \(\pi:M\to B\).  Then \(TW=\rho^*(TB\oplus\mu)\) and since \(\ell\) is even
	\[w_2(TW)=\rho^*(d+\ell d)\text{ mod }2=\rho^*d\text{ mod 2}.\] It follows that \(W\) admits a spin\(^c\) structure with canonical class \(\rho^*d.\)  We call the canonical bundle \(\lambda\). The spin\(^c\) structure on \(W\) induces one on \(M\) with canonical class \(\pi^*d\).  
	
	Then \(M,W,\) and \(\lambda\) satisfy the hypotheses of \tref{mc}.  We construct the metric \(g_W\) on \(W\) and connection \(\nabla\) on \(\lambda\) as in the theorem such that \(g_W|_M=g_M\) and \(\nabla|_M=\bar \nabla\).  Define the spin\(^c\) Dirac operator \(D^c_{g_W,\nabla}\) on \(W\) and \(D^c_{g_M,\bar\nabla}\) as in Section 2.  Given that \(g_W\) and \(\nabla\) are product-like near \(\partial W\), we can apply \eqref{aps}.  Since \(\bar\nabla\) is flat
	\[\text{scal}(g_M)>2|F^{\bar{\nabla}}|_{g_M}=0\]
	and by \eqref{cineq}
	\[\text{scal}(g_W)>\ell|F^{\nabla}|_{g_W}\geq 2|F^\nabla|_{g_W}.\]  
	Then \eqref{boch} implies that ind\((D^c_{g_W,\nabla})=0\) and ker\((D^c_{g_M,\bar{\nabla}})=\{0\}\).   It follows from \eqref{aps} that
	\begin{equation}\label{etaint}
	\eta(D^c_{g_M,\bar{\nabla}})=2\int_We^{c_1(\nabla)/2}\hat{A}(p(g_W)).\end{equation} 
	
	To evaluate that integral, we use Lemma 2.7 in \cite{KS1}:
	\begin{lem}\label{KSlem}\cite{KS}
		Let \(W\) be a manifold with boundary, and let \(\alpha,\beta\) be closed forms on \(W\) such that \(\alpha|_{\partial W}=d\hat\alpha\) and \(\beta|_{\partial W}=d\hat\beta.\)  Then 
		\[\int_W\alpha\wedge\beta=\int_{\partial W}\hat\alpha\wedge\beta+\left<j^{-1}(\alpha)\cup j^{-1}(\beta),[W,\partial W]\right>\]  
		where \(j^{-1}\) represents any preimage under the long exact sequence map 
		\[j:H^*(W,\partial W;\f{Q})\to H^*(W,\f{Q}).\]    
	\end{lem} 
	
\noindent	To apply \lref{KSlem} to \eqref{etaint}, let \(\alpha=e^{c_1(\nabla)/2}\) and \(\beta=\hat{A}(p(g_W)).\)  Since \(g_W\) is product like near the boundary, \(p_i(g_W)|_{\partial W}=p_i(g_M)\).  For \(i>0\) \(p_i(g_M)\) is exact by the assumption on the Pontryagin classes of \(M\).  Since   \(c_1(\nabla)|_{\partial W}=c_1(\bar\nabla)\) and \(\bar\nabla\) is flat, we can choose \(\hat\alpha=0\).  The form \(c_1(\nabla)\) represents the cohomology class \(c_1(\lambda)=\rho^*d.\)  Thus

	\[\eta(D^c_{g,\bar{\nabla}})=2\left<j^{-1}\left[e^{\rho^*d/2}\right]\cup j^{-1}\left[\hat{A}(TW)\right],[W,\partial W]\right>\]
	The following cup product diagram commutes:
	
	\[\begin{tikzcd}
	H^s(W,\partial W)\oplus H^t(W,\partial W)\arrow{r}{\cup}\arrow{d}{(\text{Id},j)}&H^{s+t}(W,\partial W)\arrow{d}\\H^{s}(W,\partial W)\oplus H^{t}(W)\arrow{r}{\cup}&H^{s+t}(W,\partial W)
	\end{tikzcd}\]
	Thus
	\begin{equation}\label{etacoho1}
	\eta(D^c_{g,\bar{\nabla}})=2\left<j^{-1}\left[e^{\rho^*d/2}\right]\cup \left[\hat{A}(TW)\right],[W,\partial W]\right>.
	\end{equation}
	
	Since the terms of \(\hat{A}(TW)\) have degree \(4k\), \(k\in\f{Z}\), and the dimension of \(W\) is \(4n+2\), only terms of degree \(4k+2\) in \(\displaystyle e^{\rho^*d/2}\) will contribute.  In those degrees, \(e^{\rho^*d/2}=\sinh(\rho^*d/2)\) as power series.  
	
	Since \(TW=\rho^*(TB\oplus \mu)\), \(\hat{A}(TW)=\rho^*(\hat{A}(TB)\hat{A}(\mu)).\)  For the complex line bundle \(\mu,\) we have 
	\[\hat{A}(\mu)=\frac{c_1(\mu)/2}{\sinh(c_1(\mu)/2)}=\frac{\ell d}{2\sinh (\ell d/2)}\]
	as a formal power series.  The series \(\sinh(d/2)\) is divisible by \(d,\) so 
	\[\rho^*\left(\frac{\sinh(d/2)}{\ell d}\right)\in H^*(W,\f{Q}).\]
	
	Let \(\Phi\in H^2(W,\partial W,\f{Z})\) be the Thom class of \(\rho: W\to B\).  Then \(j(\Phi)=\rho^*c_1(\mu)=\rho^*(\ell d).\) By means of another commutative diagram 
	\[\begin{tikzcd}
	H^*(W,\partial W)\oplus H^*(W)\arrow{r}{\cup}\arrow{d}{(j,\text{Id})}&H^*(W,\partial W)\arrow{d}{j}\\H^*(W)\oplus H^*(W)\arrow{r}{\cup}& H^*(W)
	\end{tikzcd}\]
	we see 
	\[j\left(\Phi\cup \rho^*\left(\frac{\sinh(d/2)}{\ell d}\right)\right)=\rho^*\left(\ell d \cup \frac{\sinh(d/2)}{\ell d} \right)=\rho^*\sinh(d/2).\] 
	Substituting into \eqref{etacoho1}
	\[\eta(D^c_{g,\bar{\nabla}})=2\left<\Phi\cup \rho^*\left(\frac{\sinh(d/2)}{\ell d}\right)\cup\rho^*\left(\frac{\hat{A}(TB)\cdot \ell d}{2\sinh(\ell d/2)}\right),[W,\partial W]\right>.\]
\[=\left<\Phi\cup \rho^*\left(\frac{\sinh(d/2)\hat{A}(TB)}{\sinh(\ell d/2)}\right),[W,\partial W]\right>.\]

	The Thom isomorphism yields
	\[\eta(D^c_{g,\bar{\nabla}})=\left<\frac{\sinh(d/2)\hat{A}(TB)}{\sinh(ld/2)},[B]\right>.\]
  
  When \(n=1\) the dimension of \(B\) is four and we have, as  series in \(H^*(B,\f{Z})\) 
  \[\hat{A}(TB)=1-\frac{p_1(TB)}{24}\]
  \[\frac{\sinh(d/2)}{\sinh(ld/2)}=\ov{\ell}\left(1-\frac{(\ell^2-1)d^2}{24}\right).\]
  
  Multiplying and isolating terms of degree four yields \eqref{etaeqneqn4}.
\end{proof}

\bigskip 

We now ready to prove \tref{thm}.  We first construct metrics of Ric\(>0\) on \(\cpcs{a}{b}\).  Perelman \cite{P} constructed a metric with Ric\(>0\) on arbitrary connected sums of \(\f{C}P^2\) with its standard orientation.  More details on Perelman's proof can be found in \cite{Bu} and \cite {BWW}.  With a slight adjustment to the construction one can reverse the orientation on some of the copies of \(\f{C}P^2\), proving the following:


\begin{lem}\label{per}
	\(\#^a\f{C}P^2\#^b\overline{\f{C}P^2}\) admits a metric with positive Ricci curvature for all \(a,b.\)
\end{lem}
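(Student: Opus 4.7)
The plan is to revisit Perelman's construction of $\Ric > 0$ metrics on $\#^a\CP^2$ and observe that reversing the orientation of any summand requires no change in the Riemannian geometry of the building blocks, only a change in the choice of gluing diffeomorphism. Perelman assembles his metric by chaining together a ``core'' piece carrying a positive Ricci metric on $\CP^2\setminus D^4$ with a prescribed boundary model (essentially a slightly convex collar of a round $S^3$), using neck regions that interpolate between two copies of this boundary model while preserving $\Ric > 0$.

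The essential observation is that $\CP^2$ and $\overline{\CP^2}$ coincide as unoriented Riemannian manifolds. A positive Ricci metric on $\CP^2\setminus D^4$ satisfying Perelman's boundary conditions therefore also serves, unchanged, as a positive Ricci metric on $\overline{\CP^2}\setminus D^4$ with the same boundary conditions. The topological distinction between a $\CP^2$ summand and an $\overline{\CP^2}$ summand in the resulting connected sum is registered entirely by the orientation of the gluing diffeomorphism at the adjoining neck; since the boundary model is a round $S^3$, which admits orientation-reversing isometries (reflections), both gluing choices are compatible with the metric construction.

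To carry out the plan, I would fix a linear chain of $a+b$ copies of Perelman's core block joined by $a+b-1$ necks, and at each neck make an independent choice of orientation for the attaching isometry, so as to realize $a$ summands with standard orientation and $b$ with reversed orientation. Every curvature inequality in Perelman's argument is orientation-independent, so the assembled metric still has $\Ric > 0$. The only technical point warranting verification is that Perelman's neck admits an orientation-reversing self-isometry exchanging its two boundary spheres, or equivalently that it can be built reflection-symmetrically across its ``waist''; this is the ``slight adjustment'' alluded to in the statement of the lemma, and I expect it to be immediate from the explicit construction in \cite{P}, \cite{Bu}, \cite{BWW}, since the neck is naturally built as a doubly-warped product on $[-1,1]\times S^3$ with symmetric warping profile. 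I anticipate this reflection-symmetry check to be the only obstacle, and a routine one: no new curvature estimates should be required beyond those already carried out in the $b=0$ case.
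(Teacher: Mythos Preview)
Your overall strategy---take Perelman's metric on $\#^{a+b}\CP^2$, cut along $S^3$ slices in the necks, and reglue by an orientation-reversing isometry to flip $b$ of the summands---is exactly the paper's approach. However, several details of Perelman's construction are misremembered in ways that matter. First, the pieces are not arranged in a linear chain: Perelman attaches all $a+b$ copies of $\CP^2$ to a single central $S^4$ via $a+b$ separate necks. Second, the neck cross-sections are not round; the neck metric has the form $dt^2 + A^2(t,x)\,dx^2 + B^2(t,x)\,d\sigma^2$, where $S^3$ is written as the suspension of $(S^2, d\sigma^2)$ with suspension parameter $x$. In particular the neck is \emph{not} symmetric under $t\mapsto -t$ (it interpolates between an $S^4$-side and a $\CP^2$-side), so the ``reflection across the waist'' you anticipate is not available. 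The paper instead uses an isometry $\phi$ of each $S^3$ slice induced by the antipodal map on the $S^2$ factor: this reverses orientation on $S^3$, preserves the warped metric $A^2\,dx^2 + B^2\,d\sigma^2$ for every $t$, and hence extends to an isometry of the whole neck, so cutting and regluing by $\phi$ is automatically smooth with no new curvature estimates needed. Once you replace your round-$S^3$/waist-reflection picture with this one, your argument goes through verbatim.
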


\begin{proof}
	In \cite{P}, Perelman puts a metric on \(\#^c\f{C}P^2\) for all values of \(c\).  The construction involves \(c\) copies of \(\f{C}P^2\) attached to a central \(S^4\) by ``necks" \(S^3\times I.\)  The metric on the necks is of the form 
	\[ds^2=dt^2+A^2(t,x)dx^2+B^2(t,x)d\sigma^2\]
	where \(t\) is the coordinate on the interval \(I\) (see page 159 of \cite{P}).   Furthermore, \(S^3\) is represented as  the product of \(S^2\) and an interval with the top and bottom each identified to a point, and \(x\) is the coordinate on that interval, while \(d\sigma^2\) is the standard metric on \(S^2.\)  
	
	An orientation reversing isometry of \(d\sigma^2\), such as the antipodal map, extends naturally to a diffeomorphism of \(\phi:S^3\to S^3\) which induces an isometry of \(ds^2.\)  Let \(c=a+b,\) and take Perelman's metric on \(\#^c\f{C}P^2.\)  For \(b\) of the necks, we cut along a copy of  \(S^3\) and re-glue with \(\phi\) rather than the identity.  Because \(\phi\) reverses orientation, the resulting manifold is \(\#^a\f{C}P^2\#^b\overline{\f{C}P^2}\). Because \(\phi\) induces an isometry on \(S^3\times I,\) the same metrics on the pieces extend smoothly over the gluing, completing the proof of the lemma.  
\end{proof}

Let \(M^5\) satisfy the hypotheses of \tref{thm}.  By \lref{buntop}, \(M\) is Type III and by \tref{bases}  \(M\) is the total space of infinitely many non-isomorphic principal \(S^1\) bundles \(\pi_k:M^5\to B^4=\cpcs{a}{b}.\) From the proof of \tref{bases} we see that the first Chern class of \(\pi_k\) is \(2d_k\), where 
\[d_k=(1+2k,1,...,1)\in\zco{\cpcs{a}{b}}{2}\cong\f{Z}^{a+b}\] for a certain infinite set of integers \(k.\)

 Using the result of \cite{GPT} (see \lref{onemetric}) we see that since \(B\) admits a metric of positive Ricci curvature by \lref{per},  \(\pi_k:M\to B\) is a principal \(S^1\) bundle, and \(\pi_1(M)\) is finite, then for each \(k\) \(M\) admits a metric \(g_k\) with Ric\((g_k)>0\) such that the \(S^1\) action corresponding to the principal bundle \(\pi_k:M\to B\) acts by isometries of \(g_k\).    

Using the Gysin sequence it follows that \(H^4(M,\ar{})=0\) and \(M,g_k,\) and \(B\) satisfy the hypotheses of \tref{etaeqn} with \(g_M=g_k\), \(d=d_k\), \(\ell=2\) and \(\bar\nabla\) any flat connection on the canonical bundle of the spin\(^c\)  structure.  By  \eqref{etaeqneqn4}  we have

\[\eta(D^c_{g_k,\bar\nabla})=-\ov{16}\left(\left<c_k^2,[B]\right>+\text{sign}(B)\right)\]
\[=-\ov{16}\left(\pm 4k^2\pm4k+2\text{sign}(B)\right)\]
using the fact that \(\left<p_1(TB)/3,[B]\right>=\left<L(TB),[B]\right>\) is equal to the signature of \(B.\)  

Thus \(\eta(D^c_{g_k,\bar\nabla})\) is a nontrivial polynomial in \(k\) and takes on infinitely many values for the infinite set of integers \(k\).   \tref{moduli}  implies that \(\mods{Ric}(M)\) has infinitely many components, completing the proof of \tref{thm}.   
\qed

\

Note that \tref{moduli}  also implies that  \(\mods{scal}(M)\) has infinitely many components.

\section{metric and connection}\label{mandc}

In this section we prove \tref{mc}.  We first set up notation for the tangent space to \(W\).  We consider \(D^2\) to be the unit disc in \(\f{C}\).  Let \(\sigma:M\times D^2\to W\) be the quotient map so \(\sigma(p,x)=[p,x]\).  Then \(\rho([p,x])=\pi(p)\).  The metric \(g_M\) and the \(S^1\) action induce an orthogonal splitting \(T_pM=\bar{H}_p\oplus \bar{V}_p\) into horizontal space \(\bar H_p\) and vertical space \(\bar V_p\) of \(\pi\).  Define horizontal and vertical spaces of \(\rho\) to be

\[H_{[p,x]}=\sigma_*(\bar{H}_p\oplus\{0\})\] 
and
\[V_{[p,x]}=\sigma_*(\{0\}\oplus T_xD^2) \]
 
 for \(p\in M\) and \(x\in D^2.\)
 
 \smallskip
 
 These is well defined since for \(z\in S^1\), \(\bar{H}_{zp}=z_*\bar{H}_p\) and \(T_{zx}D^2=z_*T_{x}D^2.\)  One can use a local section of \(\sigma\) to see that \(H_{[p,x]}\) and \(V_{[p,x]}\)  are smooth distributions on \(W\).  Note that \(V_{[p,x]}\) is the tangent space to the fiber \(\rho^{-1}(\pi(p))=\sigma(\{p\}\times D^2)\) and \(T_{[p,x]}W=H_{[p,x]}\oplus V_{[p,x]}\).  Away from the zero section of \(\rho\), \(V_{[p,x]}\) is spanned by \[W_r=\sigma_*(0,\partial_r)\text{ and }W_\theta=\sigma_*(0,\partial_\theta).\]  These are well defined smooth vector fields since \(\partial_\theta\), \(\partial_r\) are \(S^1\) invariant vector fields on \(D^2\).         

Fix \(0<L<1\) and define a diffeomorphism 

\[\tau:M\times[L,1]\hookrightarrow M\times D^2\xrightarrow{\sigma}W\]

\smallskip

\noindent of \(M\times[L,1]\) to a collar neighborhood \(U\) of 
\(\partial W\).  Let \(t\) be the coordinate on \([L,1]\) and, in a slight abuse of notation, let \(\text{proj}_{U,M}:M\times[L,1]\to M\) be the projection.  Thus

\[\rho\circ\tau=\pi\circ\text{proj}_{U,M}\]
\[\tau_*(\bar{H}_p\oplus\{0\})=H_{[p,x]}\]
 \[\tau_*(0,\partial_t)=W_r \]
  
   Let \(X^*(p)=\left.\der{}{t}\right|_{t=0}e^{it}\cdot p\) be the action field of the \(S^1\) action on \(M\), which spams \(\bar V_p.\)  Then, since
   \(\sigma_*(X^*,\partial_\theta)=0,\) \[\tau_*(X^*,0)=-W_\theta.\]  Furthermore \(\tau|_{M\times\{1\}}\) identifies \(M\) and \(\partial W\), sending \(\bar{H}_p\) to \(H_{[p,1]}\) and \(X^*\) to \(-W_\theta\).

We keep track of the maps in the following diagram. 
\[\begin{tikzcd}
&M\times D^2 \arrow{d}{\sigma}
\\ M\times I \arrow{r}{\tau} \arrow[swap]{d}{\text{proj}_{U,M}} \arrow[hookrightarrow]{ur}& W \arrow{d}{\rho} \\
M \arrow{r}{\pi} &B
\end{tikzcd}
\]

\smallskip

To construct \(g_W\) and \(\nabla\) we will use two smooth functions on the interval \([0,1]\).   Let \(f_1:[0,1]\to[0,1]\) be a smooth monotone function which is \(0\) in a neighborhood of \(0\) and \(1\) in a neighborhood of \([L,1]\).

For a constant \(\ep>0\), let 
\[f_2(r)=-\ov{2}\int_0^rf_1(t)dt-\ep r^3+r.\]

One easily sees that \(f_2>0\) on \((0,1]\) for small \(\ep\).

\subsection{Metric}\label{metric}
We define a Riemannian metric at a point \((p,(r,\theta))\in M\times D^2\), where \(r,\theta\) are polar coordinates on \(D^2\), by
\[g_{M\times D^2}(p,(r,\theta))=g_M(p)+\ep^2|X^*(p)|_{g_M}^2\left(dr^2+\frac{f_2(r)^2}{1-\ep^2f_2(r)^2}d\theta^2\right).\]

\noindent By converting to Cartesian coordinates on \(D^2\)  , one sees that \(g_{M\times D^2}\) is smooth as long as 
\[\ov{r^4}\left(\frac{f_2^2}{1-\ep^2f_2^2}-r^2\right)\]
is a smooth function of \(r\in[0,1].\)  This is easily seen to hold since for \(r\) near 0, \(f_2(r)=r-\ep r^3.\) Since \(g_{M\times D^2}\) is invariant under the diagonal action of \(S^1\) on \(M\times D^2\), it induces a metric \(g_W\) on \(W\) such that \(g_{M\times D^2}\) and \(g_W\) make \(\sigma\)  into a Riemannian submersion.  Similarly, let \(g_B\) be the metric on \(B\) such that \(g_M\) and \(g_B\) make \(\pi\) into a Riemannian submersion.  

\begin{lem}\label{sub}
	\(g_W\) and \(g_B\) make \(\rho\) into a Riemannian submersion. 
	\end{lem}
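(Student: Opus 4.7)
The plan is to exploit the commutative identity $\rho\circ\sigma=\pi\circ\operatorname{proj}_{M\times D^2,M}$ and combine the fact that $\sigma:(M\times D^2,g_{M\times D^2})\to(W,g_W)$ is a Riemannian submersion by construction with the fact that $\pi:(M,g_M)\to(B,g_B)$ is a Riemannian submersion by hypothesis. The key point is that the horizontal distribution $H_{[p,x]}=\sigma_*(\bar H_p\oplus\{0\})$ defined in the excerpt actually is the orthogonal complement of $\ker\rho_*$ inside $(TW,g_W)$, and that $\rho_*$ is an isometry on it.

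First I would identify the fiber $\ker\sigma_*$ of the principal $S^1$ action on $M\times D^2$: it is spanned by $(X^*,\partial_\theta)$. Using the explicit formula for $g_{M\times D^2}$ and the orthogonality $g_M(\bar H_p,X^*)=0$, I would verify that for $w\in\bar H_p$ and any $u\in T_xD^2$, both $(w,0)$ and $(0,u)$ are orthogonal to $(X^*,\partial_\theta)$, and hence are $\sigma$-horizontal lifts of $\sigma_*(w,0)\in H_{[p,x]}$ and $\sigma_*(0,u)\in V_{[p,x]}$ respectively. The only nontrivial check here is that the $d\theta^2$ term in $g_{M\times D^2}$ does not spoil the orthogonality of $(0,\partial_\theta)$ to itself along with $(X^*,\partial_\theta)$; but since $(0,\partial_\theta)$ paired against $(X^*,\partial_\theta)$ yields $\ep^2|X^*|^2\frac{f_2^2}{1-\ep^2 f_2^2}\neq 0$, this vector is genuinely not $\sigma$-horizontal, and one works with the $\sigma$-horizontal component instead; fortunately, to test $H\perp V$ in $g_W$ it is enough to pair the $\sigma$-horizontal $(w,0)$ against an arbitrary lift of an element of $V_{[p,x]}$ by the standard Riemannian-submersion identity. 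This gives $g_W(H_{[p,x]},V_{[p,x]})=0$ and identifies $H_{[p,x]}$ as the $g_W$-orthogonal complement of $\ker\rho_*=V_{[p,x]}$.

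Second, for $v=\sigma_*(w,0)\in H_{[p,x]}$ with $w\in\bar H_p$, I compute both sides of the isometry condition. Since $(w,0)$ is $\sigma$-horizontal and $\sigma$ is a Riemannian submersion, $|v|_{g_W}^2=g_{M\times D^2}((w,0),(w,0))=g_M(w,w)$. On the other hand, $\rho_*v=\rho_*\sigma_*(w,0)=\pi_*\operatorname{proj}_*(w,0)=\pi_*w$, and since $w\in\bar H_p$ and $\pi$ is a Riemannian submersion, $|\rho_*v|_{g_B}^2=g_M(w,w)$. The two agree, so $\rho_*|_{H_{[p,x]}}$ is an isometry onto $T_{\pi(p)}B$, proving the lemma.

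The main obstacle, such as it is, is the initial bookkeeping: confirming that the a priori definition of $H_{[p,x]}$ coincides with the $g_W$-orthogonal complement of the $\rho$-vertical space. Once one unpacks the formula for $g_{M\times D^2}$ and notes that $g_M(\bar H_p,\bar V_p)=0$ kills the only potentially dangerous cross term, everything reduces to composing two Riemannian submersions in a diagram, and no further computation with the functions $f_1,f_2$ is required.
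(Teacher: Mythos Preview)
Your proposal is correct and follows essentially the same approach as the paper's proof: both arguments verify that $\bar H_p\oplus\{0\}$ is $\sigma$-horizontal and $g_{M\times D^2}$-orthogonal to $\{0\}\oplus T_xD^2$, deduce that $H_{[p,x]}\perp V_{[p,x]}$ in $g_W$, and then chain the isometries $g_W|_{H_{[p,x]}}\cong g_{M\times D^2}|_{\bar H_p\oplus\{0\}}=g_M|_{\bar H_p}\cong g_B|_{T_{\pi(p)}B}$. The paper's version is just terser, observing directly that $\bar H_p\oplus\{0\}$ is orthogonal to both $X^*$ and $TD^2$ without the intermediate discussion of which lifts of $V_{[p,x]}$ are $\sigma$-horizontal.
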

\begin{proof}
With respect to \(g_{M\times D^2},\) \(\bar H_p \oplus\{0\}\) is orthogonal to \(X^*\) and \(TD^2.\)  Thus \(\bar H_p \oplus\{0\}\) is orthogonal to the vertical space of \(\sigma\), which is spanned by \((X^*,\partial_\theta)\), and to the horizontal projection of \(TD^2\) as well.  It follows that with respect to \(g_W,\) \(H_{[p,x]}\) is orthogonal to \(V_{[p,x]}\) and is the horizontal space of \(\rho.\)  Finally, we have
\[g_W|_{H_{[p,x]}}\cong g_{M\times D^2}|_{\bar H_p\oplus\{0\}}\cong g_M|_{\bar H_p}\cong g_B|_{T_{\pi(p)}B}.\]    
\end{proof}

We first describe the induced metric on the \(D^2\) fibers of \(\rho\). 

\begin{lem}\label{fibers}
	\[g_W|_{\rho^{-1}(\pi(p))}\cong\ep^2|X^*(p)|_{g_M}\left(dr^2+f_2(r)^2d\theta^2\right)\]
	\end{lem}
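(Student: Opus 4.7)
The plan is to compute $g_W$ restricted to a single fiber by lifting it via $\sigma$. The map $\sigma|_{\{p\}\times D^2}\colon D^2\to\rho^{-1}(\pi(p))$ is a diffeomorphism, since the freeness of the $S^1$ action forces $[p,x]=[p,y]$ to imply $x=y$. So it suffices to compute the pullback $(\sigma|_{\{p\}\times D^2})^*g_W$ and verify that it equals $\ep^2|X^*(p)|_{g_M}^2\bigl(dr^2+f_2(r)^2 d\theta^2\bigr)$.

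The main tool is that $\sigma$ is a Riemannian submersion from $(M\times D^2,g_{M\times D^2})$ to $(W,g_W)$, so for any vector $v\in T_{(p,x)}(M\times D^2)$ one has $g_W(\sigma_*v,\sigma_*v)=g_{M\times D^2}(v_H,v_H)$, where $v_H$ is the component of $v$ orthogonal to the vertical distribution of $\sigma$. The vertical distribution of $\sigma$ is spanned by the action field of the diagonal $S^1$ action, namely $(X^*,\partial_\theta)$ (this is recorded in the excerpt via $\sigma_*(X^*,\partial_\theta)=0$). A tangent vector to $\{p\}\times D^2$ has the form $v=(0,a\partial_r+b\partial_\theta)$, and I would decompose $v=v_V+v_H$ with $v_V=c(X^*,\partial_\theta)$.

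The key computation is to determine $c$ by imposing $g_{M\times D^2}(v-v_V,(X^*,\partial_\theta))=0$. Using the definition of $g_{M\times D^2}$ one gets
\[
c=\ep^2 b f_2(r)^2,
\]
after noting $g_{M\times D^2}((X^*,\partial_\theta),(X^*,\partial_\theta))=|X^*|_{g_M}^2/(1-\ep^2 f_2^2)$. Substituting $c$ back and computing $g_{M\times D^2}(v_H,v_H)$ directly, the $dr^2$ term contributes $\ep^2 |X^*|_{g_M}^2 a^2$, and the $\partial_\theta$ and $X^*$ contributions combine, using the algebraic identity
\[
c^2+\frac{\ep^2 f_2^2}{1-\ep^2 f_2^2}(b-c)^2=\ep^2 b^2 f_2^2,
\]
to give $\ep^2|X^*|_{g_M}^2 b^2 f_2(r)^2$ in total. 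Since $\sigma_*(0,\partial_r)=W_r$ and $\sigma_*(0,\partial_\theta)=W_\theta$, this is exactly the stated metric on the fiber.

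The only thing that could be called an obstacle is bookkeeping: getting the algebraic cancellation right after solving for the projection coefficient $c$. There is no conceptual difficulty, since the fiber of $\rho$ is a genuine $2$-dimensional submanifold of $W$ and the restricted metric is entirely determined by the horizontal lift with respect to $\sigma$.
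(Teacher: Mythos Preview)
Your proof is correct and takes essentially the same approach as the paper: both use that \(\sigma\) is a Riemannian submersion with vertical space spanned by \((X^*,\partial_\theta)\) and compute the fiber metric via orthogonal projection onto the horizontal space. The only cosmetic difference is that the paper computes the three metric entries \(|W_r|^2\), \(|W_\theta|^2\), \(\langle W_r,W_\theta\rangle\) separately, whereas you carry out the projection for a general vector \((0,a\partial_r+b\partial_\theta)\) in one stroke; the underlying algebra is identical.
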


\begin{proof}
	\(\sigma|_{\{p\}\times D^2}:D^2\to\rho^{-1}(\pi(p))\) is a diffeomorphism such that \(\partial_r,\partial_\theta\) are mapped to \(W_r,W_\theta\).  Since \(\sigma\) is a Riemannian submersion with vertical space generated by \((X^*,\partial_\theta),\) we calculate
	
	\[|W_r|_{g_W}^2=|(0,\partial_r)|^2_{g_{M\times D^2}}=\ep^2|X^*|_{g_M}^2\]   
\[|W_\theta|_{g_W}^2=|(0,\partial_\theta)|^2_{g_{M\times D^2}}-\frac{\left<(0,\partial_\theta),(X^*,\partial_\theta)\right>^2_{g_{M\times D^2}}}{\left<(X^*,\partial_\theta),(X^*,\partial_\theta)\right>_{g_{M\times D^2}}}\]
\[=\ep^2|X^*|_{g_M}^2\left(\frac{f_2(r)^2}{1-\ep^2f_2(r)^2}\right)-\ep^4|X^*|_{g_M}^4\left(\frac{f_2(r)^2}{1-\ep^2f_2(r)^2}\right)^2\left(\ov{|X^*|_{g_M}^2+\ep^2|X^*|_{g_M}^2\left(\frac{f_2(r)^2}{1-\ep^2f_2(r)^2}\right)}\right)\]
\[=\ep^2|X^*|_{g_M}^2f_2(r)^2\]
\[\left<W_r,W_\theta\right>_{g_W}=\left<(0,\partial_r),(0,\partial_\theta)\right>_{g_{M\times D^2}}=0.\]
\end{proof}

We next modify \(g_W\) to have the desired product structure near \(\partial W.\)  We use a technique of Wraith, which allows deformations of metrics with positive mean curvature at the boundary.  

\begin{lem}\label{mean}
\(\partial W\) has positive mean curvature with respect to an inward normal vector.
\end{lem}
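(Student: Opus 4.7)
The plan is to work in the collar $\tau\colon M\times[L,1]\hookrightarrow W$ and read off the induced metric and second fundamental form of $\partial W = \tau(M\times\{1\})$ directly from the definitions. The key observation will be that only the direction along the $S^1$-fibers of $\pi$ contributes nontrivially to $II$.

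First I would write down $\tau^*g_W$. Using that $\sigma\colon M\times D^2\to W$ is a Riemannian submersion, together with $\tau_*(0,\partial_t)=W_r$, $\tau_*(X^*,0)=-W_\theta$, and the norms $|W_r|^2=\ep^2|X^*|_{g_M}^2$ and $|W_\theta|^2=\ep^2|X^*|_{g_M}^2 f_2(t)^2$ from \lref{fibers}, one checks that for $v=v_h+aX^*\in T_pM$ (decomposed in the splitting $\bar H_p\oplus\mathbb{R}X^*$),
\[
(\tau^*g_W)_{(p,t)}(v+c\partial_t,\,v+c\partial_t) = g_M(v_h,v_h) + a^2\ep^2|X^*|_{g_M}^2 f_2(t)^2 + c^2\ep^2|X^*|_{g_M}^2,
\]
with the three pieces mutually orthogonal. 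In particular $\partial_t$ is normal to every slice $M\times\{t\}$ throughout the collar, and the inward unit normal field is $N=-\partial_t/(\ep|X^*|_{g_M})$.

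Next I would compute $II$ via the warped-product formula $II=-\tfrac12\partial_u g_u|_{u=0}$, where $u=\ep|X^*|_{g_M}(1-t)$ is arc-length inward from $\partial W$ and $g_u$ denotes the induced metric on $\{u=\text{const}\}$. Since the $g_M|_{\bar H}$-piece of $\tau^*g_W$ is $t$-independent, $II$ vanishes identically on $\pi$-horizontal vectors, and the only nonzero value is along $X^*$:
\[
II(X^*,X^*) = -\tfrac12\cdot\frac{-1}{\ep|X^*|_{g_M}}\,\partial_t\!\bigl(\ep^2|X^*|_{g_M}^2 f_2(t)^2\bigr)\Big|_{t=1} = \ep|X^*|_{g_M}\,f_2(1)\,f_2'(1).
\]
Tracing against the induced metric on $\partial W$, on which $|X^*|^2$ is rescaled by $\ep^2 f_2(1)^2$ relative to $g_M$, yields the inward mean curvature
\[
H = \frac{f_2'(1)}{\ep|X^*|_{g_M}\,f_2(1)}.
\]

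Finally I would evaluate $f_2'(1)$ directly from the definition $f_2(r)=r-\ep r^3-\tfrac12\int_0^r f_1(s)\,ds$. Since $f_1\equiv 1$ on a neighborhood of $r=1$ by construction, $f_2'(1)=1-3\ep-\tfrac12=\tfrac12-3\ep$, which is strictly positive for $\ep<1/6$. Hence $H>0$ and $\partial W$ is mean convex. The calculation is entirely direct with no substantive obstacle; the only points requiring attention are that the $\bar H$-directions drop out of $II$ (because $g_M|_{\bar H}$ is $t$-independent in $\tau^*g_W$) and that the $\ep^2 f_2(1)^2$-rescaling of $X^*$ in the induced metric must be incorporated when tracing $II$ to obtain $H$.
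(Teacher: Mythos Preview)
Your argument is correct and essentially the same as the paper's: both compute the second fundamental form in the collar, observe that the $\pi$-horizontal directions contribute nothing, and obtain $H=f_2'(1)/(\ep|X^*|_{g_M}f_2(1))=(1/2-3\ep)/(\ep|X^*|_{g_M}f_2(1))>0$ for small $\ep$. The only cosmetic difference is that the paper works directly with the frame $\{X_i,W_\theta,W_r\}$ on $W$ and the Koszul formula, while you pull back via $\tau$ and differentiate the induced metrics $g_t$; your introduction of $u=\ep|X^*|_{g_M}(1-t)$ is slightly informal since $|X^*|_{g_M}$ varies over $M$, but the underlying computation $II=\tfrac{1}{2\ep|X^*|_{g_M}}\partial_t g_t$ is valid and matches the paper's.
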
 
\begin{proof}
Let \(\bar X_i\) be local \(S^1\)-invariant vector fields extending an orthonormal frame of \(\bar H_p\) and define \( X_i=\sigma_*(\bar X_i,0).\)  At a point \([p,1]\), \( \{X_i,\ov{\ep|X^*|_{g_M}f_2}W_\theta\}\) is an orthonormal basis of \(T\partial W\) and \(-\ov{\ep|X^*|_{g_M}}W_r\) is an inward pointing unit normal vector.   Since 
\[[ X_i,W_r]=[\sigma_*(\bar X_i,0),\sigma_*(0,\partial_r)]=\sigma_*[(\bar X_i,0),(0,\partial_r)]=0\]
and \(|X_i|=1,\)
\[\ov{\ep|X^*|_{g_M}}\left<\nabla_{ X_i} X_i,-W_r\right>=\ov{\ep|X^*|_{g_M}}\left< X_i,\nabla_{ X_i}W_r\right>=\ov{\ep|X^*|_{g_M}}\left< X_i,\nabla_{W_r} X_i\right>=0.\]
Thus  
\[\ov{\ep^3|X^*|_{g_M}^3f_2(1)^2}\left<\nabla_{W_\theta}W_\theta,-W_r\right>=\ov{2\ep^3|X^*|_{g_M}^3f_2(1)^2}W_r(|W_\theta|^2)=\frac{f'_2(1)}{\ep|X^*|_{g_M}f_2(1)}.\]
  Evaluating that quantity at \(r=1\) we see the the mean curvature is
  \[\frac{1/2-3\ep}{\ep|X^*|_{g_M}f_2(1)}>0\]
for sufficiently small \(\ep.\)
\end{proof}

We see that \(g_W|_{\partial W}\) is obtained from \(g_M\) by shrinking the \(S^1\) fibers of \(\pi\), a process which preserves positive scalar curvature. 

\begin{lem}\label{unshrink}
There exists a smooth path of metrics \(g_M(s)\) on \(M\), \(s\in[\ep^2f_2(1)^2,1]\), such that \(g_M(\ep^2f_2(1)^2)=g_W|_{\partial W},\) \(g_M(1)=g_M,\) and 
\(\text{scal}(g_M(s))>0\) for all \(s.\)  
\end{lem}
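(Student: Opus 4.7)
The natural path to try is the canonical variation of the Riemannian submersion $\pi\colon M\to B$ that uniformly shrinks the $S^1$ fibers. To set this up precisely, observe that because $S^1$ acts by isometries of $g_M$, the horizontal distribution $\bar H$ is $S^1$-invariant and defines a principal connection $\omega$ on $\pi$ (normalized by $\omega(X^*)=1$), and the smooth positive function $u=|X^*|_{g_M}$ is $S^1$-invariant and descends to a function on $B$ which I still denote $u$. Hence $g_M$ takes the Kaluza--Klein form $g_M=\pi^*g_B+u^2\omega^2$, and for $s\in[\ep^2 f_2(1)^2,1]$ I define
\[g_M(s)=\pi^*g_B+s u^2\omega^2.\]
Then $g_M(1)=g_M$ and the path is clearly smooth in $s$. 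At the other endpoint, \lref{sub} and \lref{fibers} combined with the description of $\tau|_{M\times\{1\}}$ (which maps $\bar H_p$ onto $H_{[p,1]}$ and $X^*$ onto $-W_\theta$) show that $\tau|_{M\times\{1\}}$ is an isometry $(M,g_M(\ep^2 f_2(1)^2))\to(\partial W,g_W|_{\partial W})$: the horizontal parts agree because $\pi$ and $\rho$ are Riemannian submersions with the same base metric, and $|X^*|^2_{g_M(\ep^2 f_2(1)^2)}=\ep^2 f_2(1)^2\, u^2=|W_\theta|^2_{g_W}$.

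To bound scalar curvature along the path I invoke the standard Kaluza--Klein scalar curvature formula for a metric $\pi^*g_B+\phi^2\omega^2$ on a principal $S^1$-bundle,
\[\text{scal}(\pi^*g_B+\phi^2\omega^2)=\pi^*\!\left(\text{scal}(g_B)-\tfrac{1}{2}\phi^2|F|^2_{g_B}-\tfrac{2\Delta_B\phi}{\phi}\right),\]
where $F=d\omega$ is viewed as a basic 2-form on $B$. Setting $\phi=\sqrt{s}\,u$, the quotient $\Delta_B(\sqrt{s}\,u)/(\sqrt{s}\,u)=\Delta_Bu/u$ is independent of $s$, so subtracting the formula for $g_M$ from the formula for $g_M(s)$ gives
\[\text{scal}(g_M(s))-\text{scal}(g_M)=\tfrac{1}{2}(1-s)u^2|F|^2_{g_B}\circ\pi,\]
which is non-negative for $s\in(0,1]$. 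Since $\text{scal}(g_M)>0$ by hypothesis, $\text{scal}(g_M(s))>0$ throughout $[\ep^2f_2(1)^2,1]$, as required.

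The only technical ingredient is the Kaluza--Klein scalar curvature formula itself, which follows from O'Neill's Riemannian submersion formulas applied to the principal bundle structure, using that $u$ is $S^1$-invariant. The conceptually crucial observation is that the only $s$-dependent term in scalar curvature is the manifestly non-positive $-\tfrac{1}{2}s u^2|F|^2$, since the Laplacian term's apparent $s$-dependence is killed by multiplicativity of $\Delta_B$ on constants; thus shrinking the fibers can only increase scalar curvature and the positivity hypothesis propagates automatically. I do not expect a serious obstacle here.
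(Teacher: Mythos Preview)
Your proposal is correct and follows essentially the same route as the paper: you define the identical canonical variation $g_M(s)=g_M|_{\bar H}+s\,g_M|_{\bar V}$, verify the endpoints via the description of $\tau|_{M\times\{1\}}$, and observe that only the $A$-tensor (equivalently $|F|^2$) term in the O'Neill/Kaluza--Klein scalar curvature formula depends on $s$, so shrinking fibers can only increase scalar curvature. The paper states this using the general O'Neill formula $\text{scal}(g_M(s))=\text{scal}(g_B)-s|A_\pi|^2-|T_\pi|^2-|N_\pi|^2-2\delta N_\pi\geq \text{scal}(g_M)$, while you unpack the same content via the explicit Kaluza--Klein formula; these are equivalent for $S^1$ bundles.
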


\begin{proof}
We recall that \(\tau|_{M\times\{1\}}:M\to\partial W\) is a diffeomorphism.  We see that 
\[(\left(\tau|_{M\times\{1\}}\right)^*g_W)|_{\bar H_p}=g_W|_{H_{[p,1]}}=g_M|_{\bar H_p}\]
and
\[|X^*(p)|^2_{\left(\tau|_{M\times\{1\}}\right)^*g_W}=|W_\theta([p,1])|^2_{g_W}=\ep^2f_2(1)^2|X^*(p)|^2_{g_M}.\]  Thus defining
\[g_M(s)=g_M|_{\bar H_p}+sg_M|_{\bar V_p}\]
we have, for \(\ep\) small enough, that \(\ep^2f_2(1)^2<1\),  \(g_M(\ep^2f_2(1)^2)=\left(\tau|_{M\times\{1\}}\right)^*g_W,\) and \(g_M(1)=g_M.\)  Since the metric is not changing on the horizontal space of \(\pi,\) each \(g_M(s)\) makes \(\pi\) into a Riemannian submersion with \(g_B.\)  The O'Neil formula \cite{Besse} then implies
\[\text{scal}(g_M(s))=\text{scal}(g_B)-s|A_\pi|^2-|T_\pi|^2-|N_\pi|^2-2\delta N_\pi\geq \text{scal}(g_M)>0\]
where \(A_\pi,T_\pi,N_\pi\) are the tensors defined for the Riemannian submersion \(\pi\) with respect to \(g_M.\)
\end{proof}

Use the normal exponential map from \(\partial W\) to define a collar neighborhood \(V\cong M\times [0,N],\) where \(t\in[0,N]\) is the distance to \(\partial W\).  We choose \(N\) small such that \(V\subset U\).  Using this identification, \(g_W\) has the form 
\[g_W=g(t)+dt^2\]
where \(g(t)={g_W}|_{M\times{t}}\) is a smooth path of metrics on \(M\).  Since \(g(0)={g_W}|_{\partial W}\) has positive scalar curvature, we can choose \(N\) small such that \(\text{scal}(g(t))>0\) for all \(t\in[0,N].\)

\begin{lem}\label{def}
	We can alter \(g_W\) inside of \(V\) such that it is product like near \(\partial W\) with \(g_W|_{\partial W}=g_M \) and scal\((g_W|_{V})>0\)
\end{lem}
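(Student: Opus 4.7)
The plan is to construct the modified metric in two successive moves, using \lref{mean} and \lref{unshrink}. Currently $g_W|_V = g(t) + dt^2$ on $V \cong M \times [0, N]$ with $\text{scal}(g(t)) > 0$ throughout. I will replace $g_W|_V$ by a metric of the form $\tilde g(t) + dt^2$, where $\tilde g(t)$ is a smooth path of metrics on $M$ with $\tilde g(0) = g_M$, $\tilde g(t) = g(t)$ for $t$ near $N$, and $\text{scal}(\tilde g(t) + dt^2) > 0$ everywhere. Such a metric automatically glues smoothly to the unchanged $g_W$ outside $V$ and satisfies all three conclusions of the lemma.

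First I would invoke Wraith's boundary-bending technique. By \lref{mean} the boundary $\partial W$ is mean-convex for $g_W$, and the standard Gromov--Lawson--Wraith deformation then produces a modification of $g_W$ supported in a collar of $\partial W$ which agrees with the product $g_W|_{\partial W} + dt^2$ on a sub-collar $M \times [0, \delta_1]$, while preserving positive scalar curvature. The positive mean curvature enters essentially here: in the scalar curvature formula for $\tilde g(t) + dt^2$ the correction terms involving the $t$-derivatives of $\tilde g$ can be absorbed against the trace of the shape operator of the slices, which has the favorable sign given by \lref{mean}.

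Second, I would use \lref{unshrink} to unshrink the fibers on the sub-collar. Choose a smooth monotone function $\phi$ with $\phi(0) = 1$, $\phi = \ep^2 f_2(1)^2$ for sufficiently large arguments, and $\phi$ constant near both transitions, and replace the product metric on $M \times [0, \delta_1]$ by $g_M(\phi(t)) + dt^2$. At $t = 0$ this equals $g_M(1) = g_M$, and where $\phi$ stabilizes at $\ep^2 f_2(1)^2$ we recover $g_W|_{\partial W} + dt^2$, which glues smoothly to the product produced in the first step. Each slice satisfies $\text{scal}(g_M(\phi(t))) \geq \text{scal}(g_M) > 0$ by \lref{unshrink}, and taking the derivatives of $\phi$ small — achievable by pre-composing with a long reparametrization of the $t$-coordinate, or equivalently by doing the interpolation across a sufficiently long product cylinder inserted before carrying out step one — makes the correction terms coming from $\dot{\tilde g}$ and $\ddot{\tilde g}$ in the scalar curvature formula for $\tilde g(t) + dt^2$ arbitrarily small, so the total scalar curvature remains positive. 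The principal obstacle is the first step, since keeping positive scalar curvature while bending a non-product metric into a product near a mean-convex boundary is the delicate analytic point; I would appeal to the corresponding deformation lemma in Wraith's papers on moduli of positive Ricci curvature metrics rather than redo the scalar curvature estimate by hand.
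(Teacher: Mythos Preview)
Your proposal is correct and uses the same ingredients as the paper: \lref{mean}, \lref{unshrink}, and Wraith's deformation lemma. The paper's proof differs only in organization: it concatenates the unshrinking path \(g_M(\chi_1(t))\) and a reparametrized \(g(\chi_2(t))\) into a single path \(\bar g(t)\) and applies Wraith's lemma once (obtaining a metric of the form \(\bar g(t)+\beta(t)\,dt^2\) rather than \(\tilde g(t)+dt^2\)), whereas you perform the bend-to-product and the fiber unshrinking as two separate steps, handling the second by a cylinder-stretching argument instead of invoking Wraith again.
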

\begin{proof}

We use the paths \(g_M(s)\) and \(g(s)\) and the following lemma from \cite{Wmo} to replace \(g_W\) near the boundary with a product metric restricting  to \(g_M\) at the boundary.

\begin{lem}{\cite{Wmo}}\label{W}
	Let \(g(t)+dt^2\) be a metric of positive scalar curvature on \(M\times[0,N]\) such that scal\((g(t))>0\) and \(M\times\{0\}\) has positive mean curvature with respect to the inward normal vector \(\partial_t\).  Let \(\bar g(t)\) be a smooth path of metrics on \(M\) such that \(\text{scal}(\bar g(t))>0\) for \(t\in[0,N]\) and \(\bar g(t)=g(t)\) for \(t\) in a neighborhood of \(N\).  Then there exists a function \(\beta:[0,N]\to\ar{}_+\) such that \(\beta=1\) for \(t\) in a neighborhood of \(N,\) \(\beta=\beta(0)\) is constant for \(t\) in a neighborhood of \(0,\) and \(\bar g(t)+\beta(t)dt^2\) has positive scalar curvature.

		\end{lem}

To define our replacement path \(\bar g\), we define two smooth functions.  

\[\chi_1:[0,N/2]\to[\ep^2f_2(1)^2,1]\text{ such that } \chi_1(t)=1\text{ for } t\text{ near }0\text{ and } \chi_1(t)=\ep^2f_2(1)^2\text{ for }t \text{ near } N/2 \]
\[\chi_2:[N/2,N]\to[0,1]\text{ such that } \chi_2(t)=0\text{ for } t\text{ near }N/2\text{ and } \chi_2(t)=t\text{ for }t \text{ near } N. \]
We then define a smooth path of metrics

\[\bar g (t)=\left\{\begin{array}{cc} g_M\circ\chi_1(t) & t\in[0,N/2]\\ g\circ \chi_2(t) & t\in[N/2,N]\end{array}\right..\]

      \noindent By \lref{unshrink} and the definition of \(g\),  \(\text{scal}(\bar g(t))>0\) for all \(t.\)  Then \lref{mean} and \lref{W} imply that \(\bar g(t)+\beta(t)dt^2\) has positive scalar curvature for the function \(\beta(t)\) given by \lref{W}.  For \(t\) near \(N\), \(\bar g(t)=g(t)\) and \(\beta(t)=1\) , so \(\bar g(t)+\beta(t)dt^2=g_W\).  Thus replacing \(g_W|_V\) with this metric results in a new smooth metric, for which we reuse the notation \(g_W.\)   Since \(\bar g(t)=g\) and \(\beta(t)\) is constant for \(t\) near \(0,\)  \(\bar g(t)+\beta(t)dt^2\) has the desired product structure \eqref{prodm}. This proves \lref{def}.  

\end{proof}

\subsection{Connection}
Let \(\beta\in \Omega^2(B)\) represent the image of \(\ell d\) in \(H^2(B,\ar{}).\)  The Gysin sequence for an \(S^1\) bundle shows that \(\pi^*ld=0,\) so we can choose \(\alpha\in\Omega^1(M)\) such that \(\pi^*\beta=d\alpha.\)  Since \(\pi^*\beta\) is \(S^1\) invariant, we can choose \(\alpha\) to be \(S^1\) invariant by averaging.  

\begin{lem}\label{thom}
\(\alpha(X^*)=-\ov{2\pi}\)\end{lem}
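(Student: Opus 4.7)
The plan is to reduce the identity to the standard Chern--Weil description of the first Chern class of a principal $S^1$-bundle in terms of a connection 1-form. First I would verify that $\alpha(X^*)$ is a constant function on $M$. Since $\alpha$ is $S^1$-invariant, Cartan's formula gives
\[ 0 = \mathcal{L}_{X^*}\alpha = d(\iota_{X^*}\alpha) + \iota_{X^*}d\alpha = d(\alpha(X^*)) + \iota_{X^*}\pi^*\beta, \]
and the last term vanishes because $\pi_*X^* = 0$. Since $\pi_1(M)$ is finite by the standing hypothesis of \tref{mc}, $M$ is connected, so $\alpha(X^*)$ is a global constant which I will call $c$.

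The next step is an independence observation: if $\alpha_1,\alpha_2$ are two $S^1$-invariant primitives of $\pi^*\beta$, their difference $\eta=\alpha_1-\alpha_2$ is closed and $S^1$-invariant, so $\eta(X^*)$ is again constant by the same Cartan calculation, and $\int_\gamma\eta = 2\pi\,\eta(X^*)$ along any $S^1$-orbit $\gamma$. Finiteness of $\pi_1(M)$ yields a positive integer $n$ with $n\gamma$ nullhomologous in $M$, so Stokes' theorem forces $\eta(X^*)=0$. A parallel observation shows that replacing $\beta$ by $\beta+d\sigma$ with $\sigma\in\Omega^1(B)$ (and adjusting $\alpha$ by $\pi^*\sigma$) leaves $c$ unchanged, so $c$ is determined solely by the cohomology class $[\beta]=\ell d\in H^2(B;\mathbb{R})$.

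Finally I would exhibit one explicit model achieving $c=-\tfrac{1}{2\pi}$. Choose any principal $S^1$-connection $\omega=i\tilde\omega$ on $\pi:M\to B$, so that $\tilde\omega$ is a real, $S^1$-invariant 1-form with $\tilde\omega(X^*)=1$. By Chern--Weil, $-\tfrac{1}{2\pi}d\tilde\omega$ descends to a closed real 2-form $\bar\beta$ on $B$ with $[\bar\beta]=c_1(\pi)=\ell d$. Setting $\alpha_0=-\tfrac{1}{2\pi}\tilde\omega$ gives $d\alpha_0=\pi^*\bar\beta$ and $\alpha_0(X^*)=-\tfrac{1}{2\pi}$. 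Since $[\bar\beta]=[\beta]$, the independence step of the preceding paragraph forces $c=-\tfrac{1}{2\pi}$.

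The main obstacle is the independence step: one has to rule out a closed $S^1$-invariant 1-form $\eta$ on $M$ with $\eta(X^*)\neq 0$, which is exactly where finiteness of $\pi_1(M)$ (equivalently $H_1(M;\mathbb{R})=0$) enters; without it the identity genuinely fails, for instance on $T^2\to S^1$. The rest is bookkeeping with a fixed Chern--Weil sign convention.
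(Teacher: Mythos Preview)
Your argument is correct and complete. It differs from the paper's route, so let me briefly compare.

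The paper proves the lemma by passing to the disc bundle $W=M\times_{S^1}D^2$. It takes a Thom form $\Phi\in\Omega^2(W)$, writes $\rho^*\beta-\Phi=d\bar\alpha$, observes that $\Phi$ vanishes near $\partial W$ so $\bar\alpha|_M-\alpha$ is closed (hence exact, since $\pi_1(M)$ is finite), and then applies Stokes' theorem on a single fiber disc $\rho^{-1}(q)$: the defining property $\int_{\rho^{-1}(q)}\Phi=1$ together with $\int_{\rho^{-1}(q)}\rho^*\beta=0$ yields $\int_{\pi^{-1}(q)}\alpha=-1$, i.e.\ $\alpha(X^*)=-\tfrac{1}{2\pi}$.

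Your approach stays entirely on $M$: you first show $\alpha(X^*)$ is a constant via Cartan's formula, then establish that this constant is independent of the particular $S^1$-invariant primitive $\alpha$ and of the representative $\beta$ of $\ell d$, and finally compute it once using a connection $1$-form and Chern--Weil. Both proofs invoke finiteness of $\pi_1(M)$ at the analogous point---to kill the ambiguity of a closed $1$-form on $M$---and both ultimately identify $2\pi\alpha(X^*)$ with (minus) the pairing of $c_1(\pi)$ with a fiber. The paper's version is shorter because the Thom form packages the Chern class normalization and the Stokes step simultaneously, and $W$ is already on hand for the rest of \tref{mc}. Your version has the virtue of being self-contained on $M$ and of making transparent exactly where the $H^1(M;\mathbb{R})=0$ hypothesis is needed; your closing remark about $T^2\to S^1$ is the right counterexample. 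The only caveat is the one you already flag: the sign in the Chern--Weil step must match the paper's convention (which is implicit in the later choice $F^{\nabla_B}=\tfrac{2\pi i}{\ell}\beta$), so be sure to fix that convention explicitly if you write this up.
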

\begin{proof}
Let \(\Phi\in\Omega^2(W)\) be a Thom form of the disc bundle \(\rho:W\to B.\)  Since 
\[\left[\Phi\right]\mapsto\rho^*ld\]
under the long exact sequence map \(H^2(W,\partial W)\to H^2(W),\)  We have 
\[\rho^*\beta-\Phi=d\bar{\alpha}\]  for some  \(\bar{\alpha}\in\Omega^1(W)\).  Since \(\Phi\) vanishes near \(\partial W,\) \[d\bar{\alpha}|_{M}=\rho^*\beta|_M=\pi^*\beta=d\alpha.\]  

Since \(\pi_1(M)\) is finite, \(\bar\alpha|_M-\alpha\) is exact.  By the defining property of the Thom form, for any point \(q\in B\), \(\int_{\rho^{-1}(q)}\Phi=1\).  We use Stokes' theorem to compute  
\[-1=\int_{\rho^{-1}(q)}\rho^*\beta-\Phi=\int_{\rho^{-1}(q)}d\bar\alpha=\int_{\pi^{-1}(q)}\bar\alpha=\int_{\pi^{-1}(q)}\alpha=2\pi\alpha(X^*).\]

\end{proof}

We next construct a form \(\gamma\in\Omega^1(W)\) extending \(2\pi\alpha/\ell.\)  We first define a form \(\bar{\gamma}\in\Omega(M\times D^2)\).

At \((p,x)\in M\times D^2\), \(x\neq0\), set
\[\bar{\gamma}|_{\bar{H}_p\times\{0\}}=\frac{2\pi}{\ell}\alpha_{\bar{H}_p} \quad \bar{\gamma}(X^*,0)=-\frac{f_1(r)}{\ell}\]
\[\bar{\gamma}(0,\partial_r)=0 \quad \bar{\gamma}(0,\partial_\theta)=\frac{f_1(r)}{\ell}.\]

\noindent where \(r\) is the radial coordinate on \(D^2\).  This form extends smoothly to the origin of \(D^2\) since \(f_1\) is zero in a neighborhood of \(r=0\).    Since \(r\), \(\bar{H}_p\oplus\{0\}\), \(\alpha\), \(\partial_r\), \(\partial_\theta\), \(X^*\) are all preserved by the \(S^1\) action, \(\bar{\gamma}\) is \(S^1\) invariant.  The vertical space of \(\sigma\) is generated by \((X^*,\partial_\theta)\), and so \(\bar{\gamma}\) vanishes on the vertical space.  It follows that there is a unique form \(\gamma\in\Omega(W)\) such that \(\sigma^*\gamma=\bar{\gamma}\).   

\begin{lem}\label{gcol}
\(	\tau^*\gamma=\frac{2\pi}{\ell}\text{\normalfont proj}_{U,M}^*\alpha\)
	\end{lem}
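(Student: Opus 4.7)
The plan is to factor $\tau$ through $\sigma$ and then unpack the definition of $\bar{\gamma}$ on each of three tangent directions. Writing $\tau = \sigma \circ \iota$, where $\iota: M\times[L,1] \hookrightarrow M\times D^2$ is the natural inclusion $(p,t)\mapsto(p,(t,0))$ in polar coordinates on $D^2$, the relation $\sigma^*\gamma = \bar\gamma$ gives $\tau^*\gamma = \iota^*\bar\gamma$. So the entire lemma reduces to computing $\iota^*\bar\gamma$ and matching it against $\frac{2\pi}{\ell}\operatorname{proj}_{U,M}^*\alpha$.

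Since $T_pM = \bar H_p\oplus\bar V_p$ with $\bar V_p$ spanned by $X^*_p$, the tangent space $T_{(p,t)}(M\times[L,1])$ is spanned by vectors of the three types $(u,0)$ with $u\in\bar H_p$, $(X^*_p,0)$, and $(0,\partial_t)$, so it suffices to check the identity on these. For $(u,0)$ with $u\in\bar H_p$, the definition of $\bar\gamma$ on $\bar H_p\times\{0\}$ yields $\iota^*\bar\gamma(u,0) = \frac{2\pi}{\ell}\alpha(u)$, while $\operatorname{proj}_{U,M}^*\alpha$ evaluates to $\alpha(u)$ as well, so the two sides agree. For $(0,\partial_t)$, the pushforward $\iota_*(0,\partial_t) = (0,\partial_r)$ gives $\iota^*\bar\gamma(0,\partial_t) = \bar\gamma(0,\partial_r) = 0$, and the right-hand side is also $0$ since $\operatorname{proj}_{U,M}{}_*$ kills this vector.

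The only case requiring the specific choice of cutoff is $(X^*_p, 0)$. Here $\iota^*\bar\gamma(X^*,0) = \bar\gamma(X^*,0) = -f_1(t)/\ell$, which equals $-1/\ell$ since $t\in[L,1]$ and $f_1\equiv1$ on a neighborhood of $[L,1]$ by construction. On the other side, $\frac{2\pi}{\ell}\operatorname{proj}_{U,M}^*\alpha(X^*,0) = \frac{2\pi}{\ell}\alpha(X^*)$, which by \lref{thom} equals $\frac{2\pi}{\ell}\cdot\bigl(-\frac{1}{2\pi}\bigr) = -\frac{1}{\ell}$. Thus the two forms agree on a spanning set at every point of $M\times[L,1]$, proving the lemma.

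The argument is essentially formal bookkeeping of the pushforward through the quotient $\sigma$; there is no real obstacle. The only non-trivial ingredient is \lref{thom}, which pins down $\alpha(X^*) = -1/(2\pi)$ and makes the normalization constants match with the factor $f_1/\ell$ built into $\bar\gamma$. The reason for the specific form of $\bar\gamma$ (with its $f_1$ weights in the vertical directions) becomes transparent here: it is chosen precisely to interpolate between a pullback form near $\partial W$ and a form that extends smoothly across the zero section of $\rho$.
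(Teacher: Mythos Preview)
Your proof is correct and follows essentially the same approach as the paper's: both factor $\tau^*\gamma$ through $\sigma^*\gamma=\bar\gamma$, then verify the identity on the three tangent directions $\bar H_p$, $X^*$, and $\partial_t$, using that $f_1\equiv 1$ on $[L,1]$ and invoking \lref{thom} for the value $\alpha(X^*)=-1/(2\pi)$. Your write-up is slightly more explicit in naming the inclusion $\iota$ and tracking the pushforwards, but the argument is the same.
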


\begin{proof} Recall that \(f_1(r)=1\) for \(r\) in the image of \(\tau\) and note that \(\tau^*\gamma=(\sigma^*\gamma)|_{M\times[L,1]}=\bar\gamma|_{M\times[L,1]}.\)  Thus:	\[\tau^*\gamma|_{\bar{H}_p\oplus\{0\}}=\bar{\gamma}|_{\bar{H}_p\oplus\{0\}}=\frac{2\pi}{\ell}\alpha_{\bar{H}_p},	\quad\quad \tau^*\gamma(X^*,0)=\bar\gamma(X^*,0)=-\frac{f_1(r)}{\ell}=\frac{2\pi}{\ell}\alpha(X^*)\]
		\[\text{and}\quad \tau^*\gamma(0,\partial_t)=\bar\gamma(0,\partial_r)=0=\frac{2\pi}{\ell}\alpha(\text{proj}_{M*}(0,\partial_t)).\]
\end{proof}
  Let \(\lambda_B\) be the complex line bundle with \(c_1(\lambda_B)=d\).   Given a differential form in the de Rahm cohomology class of \(2\pi i\) times the first Chern class of a complex line bundle, there is a unitary connection on the line bundle whose curvature is that differential form.   Thus, since \(\beta\) represents \(\ell d\),  let \(\nabla_B\) be a unitary connection on \(\lambda_B\) with curvature
\[F^{\nabla_B}=\frac{2\pi i}{\ell}\beta.\]

We  now define a connection on \(\lambda\)

\[\nabla=\rho^*\nabla_B-i\gamma.\]

\begin{lem}\label{ccol}
\(\nabla\) is flat on \(U.\)
	\end{lem}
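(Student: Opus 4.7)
The plan is to compute the curvature $F^{\nabla}$ directly and show it vanishes on the collar neighborhood $U$ by pulling back via the diffeomorphism $\tau : M \times [L,1] \to U$.

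First I would write
\[
F^{\nabla} \;=\; \rho^{*} F^{\nabla_{B}} \;-\; i\, d\gamma \;=\; \tfrac{2\pi i}{\ell}\,\rho^{*}\beta \;-\; i\, d\gamma,
\]
using the definition $\nabla = \rho^{*}\nabla_{B} - i\gamma$ and the formula $F^{\nabla_{B}} = \tfrac{2\pi i}{\ell}\beta$. It then suffices to show $\tau^{*}F^{\nabla} = 0$, since $\tau$ is a diffeomorphism onto $U$.

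Next I would apply $\tau^{*}$ to each term. By \lref{gcol}, $\tau^{*}\gamma = \tfrac{2\pi}{\ell}\,\text{proj}_{U,M}^{*}\alpha$, so
\[
\tau^{*}(d\gamma) \;=\; d(\tau^{*}\gamma) \;=\; \tfrac{2\pi}{\ell}\,\text{proj}_{U,M}^{*}(d\alpha) \;=\; \tfrac{2\pi}{\ell}\,\text{proj}_{U,M}^{*}\pi^{*}\beta,
\]
where the last equality uses the defining relation $d\alpha = \pi^{*}\beta$. On the other hand, the commutativity $\rho\circ\tau = \pi\circ\text{proj}_{U,M}$ gives $\tau^{*}\rho^{*}\beta = \text{proj}_{U,M}^{*}\pi^{*}\beta$.

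Substituting,
\[
\tau^{*}F^{\nabla} \;=\; \tfrac{2\pi i}{\ell}\,\text{proj}_{U,M}^{*}\pi^{*}\beta \;-\; i\cdot\tfrac{2\pi}{\ell}\,\text{proj}_{U,M}^{*}\pi^{*}\beta \;=\; 0,
\]
so $F^{\nabla} = 0$ on $U$, i.e.\ $\nabla$ is flat there. There is no real obstacle here; the statement is essentially bookkeeping once \lref{gcol} is in hand, and the key point is that $\gamma$ was constructed precisely so that its restriction to $U$ agrees (up to the factor $2\pi/\ell$) with the pullback of the primitive $\alpha$ of $\pi^{*}\beta$, which is exactly what makes the curvature two terms cancel.
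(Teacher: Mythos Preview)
Your proof is correct and essentially identical to the paper's. The only cosmetic difference is that you compute $F^{\nabla}$ first and then apply $\tau^{*}$, whereas the paper pulls back the connection $\tau^{*}\nabla = \text{proj}_{U,M}^{*}(\pi^{*}\nabla_B - \tfrac{2\pi i}{\ell}\alpha)$ and then takes curvature; both rely on \lref{gcol} and the relation $d\alpha = \pi^{*}\beta$ in exactly the same way.
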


\begin{proof}
	We need to show that \(F^{\tau^*\nabla}=0\).  Using \lref{gcol}  it follows that
	\[\tau^*\nabla=\tau^*\rho^*\nabla_B-i\tau^*\gamma\]
		\[=\text{proj}_{U,M}^*\left(\pi^*\nabla_B-\frac{2\pi i}{\ell}\alpha\right)\]
		and hence the curvature of the term in the parentheses is 
		\[\frac{2\pi i}{\ell}\pi^*\beta-\frac{2\pi i}{\ell}d\alpha=0.\]
	
	\end{proof}

We finish the construction of \(\nabla\) by modifying it so that it is product like near \(\partial W\) and restricts to \(\bar\nabla\) at \(\partial W\).  Let \(\text{proj}_{V,M}^*:V\to M\) be the projection defined by the identification \(V\cong M\times[0,N]\) from \sref{metric}.  Note that while \(V\subset U,\) \(\text{proj}_{V,M}\) and \(\text{proj}_{U,M}\)  will not in general agree (the later was defined independently of \(h\) and the former using \(h\).)  Since \(V\subset U,\) \(\nabla\) is flat on \(V.\)  Since \(\text{proj}_{V,M}\) and the inclusion of \(\partial W\cong M\times\{0\} \) are homotopy inverses, \(\text{proj}_{M,V}^*(\lambda|_M)=\lambda|_V.\)  Thus \(\nabla|_V\) and \(\text{proj}_{V,M}^*\bar{\nabla}\) are both flat unitary connections on \(\lambda|_V\) and 
\[\text{proj}_{V,M}^*(\bar\nabla)-\nabla|_V=i\delta\]
for some closed form \(\delta\in\Omega^1(V).\) Since \(\pi_1(V)=\pi_1(M)\) is finite, \(\delta=df\) for a smooth function \(f\) on \(V\).  We modify \(f\) to a function \(\bar{f}\) which is equal to \(f\) near \(\partial W\cong M\times \{0\}\) and equal to \(0\) near \(M\times\{N\}.\)  We then replace \(\nabla\) with \(\nabla+id\bar f\) on \(V.\)  We see that \(\nabla\) is still smooth, flat on \(V\), and near \(\partial W\),  \(\nabla=\text{proj}_{V,M}^*\bar{\nabla},\) satisfying \eqref{prodc}   

\subsection{Curvature}
We complete the proof of \tref{mc} by showing that \eqref{cineq} holds.  On \(V,\) \(\nabla\) is flat and by \lref{def} \(\text{scal}(g_W)>0,\) so the inequality is satisfied.  For the remainder of the proof we consider \(W\backslash V\).   Then \(\text{scal}(g_W)\) is given by \lref{sub} and the O'Neil formula for the scalar curvature of Riemannian submersion 
\[\text{scal}(g_W)=\text{scal}\left(g_W|_{\rho^{-1}(\pi(p))}\right)+\text{scal}(g_B)-|A_\rho|^2-|T_\rho|^2-|N_\rho|^2-2\delta N_\rho.\]

As \(\ep\to0,\) \(|A_\rho|\to0,\) while the final three terms remain constant.  By \lref{fibers}, 
\[\text{scal}\left(g_W|_{\rho^{-1}(\pi(p))}\right)=-\frac{2}{\ep^2|X^*|^2_{g_M}}\left(\frac{f_2''}{f_2}\right).\]  

Therefore, as \(\ep\to 0,\)
\[\text{scal}(g_W)=-\frac{2}{\ep^2|X^*|^2_{g_M}}\left(\frac{f_2''}{f_2}\right)+O(1).\]

 Let \(\bar X_i\) be an orthonormal basis of \(\bar{H}_{p}\) with respect to \(g_M\). Let \(X_i=\sigma_*(\bar{X}_i,0)\).  Then \(\{X_i\}\) is an orthonormal basis of \(H_{[p,x]}\) with respect to \(g_W\) outside of \(V\).  Away from the zero section \(\{\ov{\ep|X^*|_{g_M}}W_r,\ov{\ep|X^*|_{g_M}f_2}W_\theta\}\) is an orthonormal basis of \(V_{[p,x]}\).   Neither the \(\bar X_i\) nor \(\nabla\) depend on \(\ep\).  Then as \(\ep\to0\), using \eqref{norm}
\[\left|F^{{\nabla}}\right|_{g_M}\leq\ov{\ep^2|X^*|^2_{g_M}f_2}|F^{{\nabla}}(W_r,W_\theta)|+\sum_i\ov{\ep|X^*|_{g_M}}|F^{{\nabla}}(W_r,X_i)|+\ov{\ep|X^*|_{g_M}f_2}|F^{{\nabla}}(W_\theta,X_i)|+O(1).\]     

\begin{lem}\label{vals}\(F^\nabla(W_r,W_\theta)=-if_1'(r)/\ell\) ,  \label{formv}\(F^\nabla(W_r,X_i)=F^\nabla(W_\theta,X_i)=0\).\end{lem}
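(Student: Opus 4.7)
The plan is to reduce everything to a computation on $M\times D^2$ via $\sigma$ and use the explicit formula defining $\bar\gamma$.

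First I would split $F^\nabla$ into its two natural pieces. Since $\nabla=\rho^*\nabla_B-i\gamma$ and $F^{\nabla_B}=\tfrac{2\pi i}{\ell}\beta$, we get
\[
F^\nabla \;=\; \tfrac{2\pi i}{\ell}\rho^*\beta \;-\; i\,d\gamma.
\]
I would then observe that $\rho^*\beta$ vanishes on any pair of tangent vectors where at least one is vertical for $\rho$. Since $W_r$ and $W_\theta$ are tangent to the disc fibers, $\rho_\ast W_r=\rho_\ast W_\theta=0$, so $\rho^*\beta(W_r,W_\theta)=\rho^*\beta(W_r,X_i)=\rho^*\beta(W_\theta,X_i)=0$. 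Consequently all three quantities reduce to computing $-i\,d\gamma$ on the given pairs.

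Next I would pull everything back by $\sigma$. Since $W_r=\sigma_*(0,\partial_r)$, $W_\theta=\sigma_*(0,\partial_\theta)$, and $X_i=\sigma_*(\bar X_i,0)$ with $\bar X_i$ extended to be $S^1$-invariant on $M$, I have
\[
d\gamma(W_\bullet,W_\star) \;=\; (\sigma^*d\gamma)\bigl((\,\cdot\,),(\,\cdot\,)\bigr) \;=\; d\bar\gamma\bigl((\,\cdot\,),(\,\cdot\,)\bigr)
\]
for the corresponding lifts. I would then apply the Cartan formula $d\bar\gamma(X,Y)=X\bar\gamma(Y)-Y\bar\gamma(X)-\bar\gamma([X,Y])$ three times. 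In each case the relevant brackets $[(0,\partial_r),(0,\partial_\theta)]$, $[(0,\partial_r),(\bar X_i,0)]$, $[(0,\partial_\theta),(\bar X_i,0)]$ all vanish (the first because $\partial_r,\partial_\theta$ commute on $D^2$, the latter two because the coordinates on $M$ and $D^2$ are independent and $\bar X_i$ is chosen $S^1$-invariant, hence $\theta$-independent).

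This leaves only the two derivative terms in each case. For the $(W_r,W_\theta)$ entry, $\bar\gamma(0,\partial_\theta)=f_1(r)/\ell$ depends only on $r$, giving $\partial_r(f_1(r)/\ell)=f_1'(r)/\ell$, while $\bar\gamma(0,\partial_r)=0$, yielding $F^\nabla(W_r,W_\theta)=-if_1'(r)/\ell$. For $(W_r,X_i)$ and $(W_\theta,X_i)$, the function $\bar\gamma(\bar X_i,0)=\tfrac{2\pi}{\ell}\alpha(\bar X_i)$ is pulled back from $M$, so it is independent of $r$ and $\theta$ and is killed by both $\partial_r$ and $\partial_\theta$; conversely $\bar\gamma(0,\partial_r)=0$ and $\bar\gamma(0,\partial_\theta)=f_1(r)/\ell$ depends only on $r$, so it is killed by $\bar X_i$ (which is purely horizontal on $M$). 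Hence both mixed entries vanish.

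No genuine obstacle arises here; the content is bookkeeping, and the only subtle point is being careful that one may choose $\bar X_i$ to be $S^1$-invariant (so its lift $(\bar X_i,0)$ commutes with $(0,\partial_\theta)$ and with $(0,\partial_r)$), which is justified exactly because $\bar H_p$ is an $S^1$-invariant distribution.
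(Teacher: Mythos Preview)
Your proof is correct and follows essentially the same route as the paper: reduce to $-i\,d\gamma$ using that $W_r,W_\theta$ are $\rho$-vertical, pull back along $\sigma$ to $d\bar\gamma$, and compute directly from the defining formula for $\bar\gamma$ using the Cartan formula with vanishing brackets. The only difference is that you spell out the bracket terms and the $S^1$-invariance of $\bar X_i$ a bit more explicitly than the paper does.
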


\begin{proof}
	Since \(\rho_*W_r=\rho_*W_\theta=0,\)
\[F^\nabla(W_r,W_\theta)=-id\gamma(W_r,W_\theta)=-id\gamma(\sigma_*(0,\partial_r),\sigma_*(0,\partial_\theta))\]
\[=-i\sigma^*d\gamma((0,\partial_r),(0,\partial_\theta))=-id\bar\gamma((0,\partial_r),(0,\partial_\theta))=-i\partial_r\bar\gamma(0,\partial_\theta)=-i\frac {f_1'(r)}{\ell}\]
similarly 
\[\F^\nabla(W_r,X_i)=-id\bar\gamma((0,\partial_r),(\bar X_i,0))=-i\left(\partial_r\left(\frac{2\pi}{\ell}\alpha(\bar X_i)\right)-\bar X_i\left(\frac{f_1(r)}{\ell}\right)\right)=0\]
and 
\[\F^\nabla(W_\theta,X_i)=-id\bar\gamma((0,\partial_\theta),(\bar X_i,0))=-i\left(\partial_\theta\left(\frac{2\pi}{\ell}\alpha(\bar X_i)\right)\right)=0.\]
\end{proof}

\lref{vals} implies that as \(\ep\to0,\)
\[\text{scal}(g_W)-\ell|F^\nabla|_{g_M}=\ov{\ep^2|X^*|^2_{g_M}}\left(\frac{-2f_2''-f_1'}{f_2}\right)+O(1)\]
\[=\frac{12}{\ep|X^*|^2_{g_M}}\left(\frac{r}{f_2}\right)+O(1)\]

From the definition of \(f_2\) one sees that \(r/f_2\to 1\) as \(r\to0.\)  It follows that we can choose \(\ep\) small enough that \eqref{cineq} holds, completing the proof of \tref{mc}.  
\qed

\bigskip

  In \cite{KS} Lemma 4.2, Kreck and Stolz constructed positive scalar curvature metrics on  associated disc bundles in order to calculate their invariant for spin manifolds with free \(S^1\) actions.  In their proof, they needed to assume that the \(S^1\) orbits were geodesics.  The metric \(g_W\) constructed in \tref{mc} generalizes their method to a free isometric \(S^1\) action without the geodesic condition.

\end{document}